\declaretheorem[name=Theorem,numberwithin=section]{thm}
\newtheorem*{thm*}{Theorem}
\newtheorem{cor}[thm]{Corollary}
\newtheorem{prop}[thm]{Proposition}
\newtheorem{lem}[thm]{Lemma}
\theoremstyle{definition}
\newtheorem{defn}[thm]{Definition}
\newtheorem*{ack}{Acknowledgements}
\newtheorem{ex}[thm]{Example}
\theoremstyle{remark}
\newtheorem{rem}[thm]{Remark}
\newcommand{\QQ}{\mathbb{Q}}
\newcommand{\HH}{\mathcal{H}}
\newcommand{\QQQ}{\mathcal{Q}}
\newcommand{\PPP}{\mathcal{P}}
\newcommand{\RR}{\mathbb{R}}
\newcommand{\CC}{\mathbb{C}}
\newcommand{\OO}{\mathcal{O}}
\newcommand{\ord}{\operatorname{ord}}
\newcommand{\ZZ}{\mathbb{Z}}
\newcommand{\PP}{\mathbb{P}}
\newcommand{\Pic}{\text{Pic}}
\newcommand{\Mgn}{\mathcal{M}_{g,n}}
\newcommand{\Mgb}{\overline{\mathcal{M}}_g}
\newcommand{\Mgnb}{\overline{\mathcal{M}}_{g,n}}
\newcommand{\Mbar}[2]{\overline{\mathcal{M}}_{{#1}, {#2}}}
\newcommand{\res}{\text{res}}
\newcommand{\SL}{\mbox{SL}}
\newtheorem{Thm*}{Theorem*}
\theoremstyle{definition}
\title{$k$-differentials on curves and rigid cycles in moduli space}
\author{Scott Mullane}
\date{\today}
\begin{document}
\thispagestyle{empty}

\maketitle

\begin{abstract}
For $g\geq2$, $j=1,\dots,g$ and $n\geq g+j$ we exhibit infinitely many new rigid and extremal effective codimension $j$ cycles in $\overline{\mathcal{M}}_{g,n}$ from the strata of quadratic differentials and projections of these strata under forgetful morphisms and show the same holds for $k$-differentials with $k\geq 3$ if the strata are irreducible. We compute the class of the divisors in the case of quadratic differentials which contain the first known examples of effective divisors on $\Mgnb$ with negative $\psi_i$ coefficients. 
\end{abstract}

\setcounter{tocdepth}{1}

\tableofcontents

%%%%%%%%%%%%%%%%%%%%%%%%%%%%%
%%%%%%%%%%%%%%%%%%%%%%%%%%%%%
%%%%%%%%%%%%%%%%%%%%%%%%%%%%%
\section{Introduction}
%%%%%%% 
The effective cone of divisors in a projective variety broadly governs the birational geometry of the variety. For this reason, the structure of the effective cone of $\Mgnb$ has become a key question in the birational geometry of the moduli spaces of curves~\cite{HarrisMumford,EisenbudHarrisKodaira,Farkas23,FarkasPopa,Logan,ChenCoskun,CastravetTevelevHypertree}. Recently, there has also been growing interest in understanding the structure of the effective cones of higher codimension cycles that dictate the finer aspects of the birational geometry of these spaces~\cite{ChenCoskunH,FL1,FL2}. An initial task in this pursuit is to identify, in each codimension, the extremal effective cycles that span the boundary rays of the effective cone of cycles.

The moduli space of abelian differentials $\mathcal{H}(\mu)$ consists of pairs $(C,\omega)$ of a holomorphic or meromorphic one form $\omega$ on a smooth curve $C$ with fixed multiplicities of zeros and poles given by $\mu$. 
%Locally integrating the differential $\omega$ gives an atlas of charts for $C$ where all transition functions are translations and hence a pair $(C,\omega)$ can be represented as polygons in the plane with parallel side identifications admitting a natural action of $\SL(2,\RR)$ on $\mathcal{H}(\mu)$ via the action on the polygons. 
Recent seminal work exposes the fundamental algebraic attributes of these spaces~\cite{McMullen,KontsevichZorich,MoellerHodge,EskinMirzakhaniMohammadi,EskinMirzakhani,Filip}. 
Furthermore, the condition of the existence of a holomorphic or meromorphic one form of fixed signature has been used previously both explicitly and under many guises to obtain divisor classes~\cite{Cukierman,Diaz,ChenStrata,ChenCoskun,Logan,Muller,GruZak,FarkasBN,FarkasVerraTheta} and in lower genus, higher codimension cycles~\cite{ChenCoskunH,ChenTarasca,Blankers} in $\Mgb$ and $\Mgnb$. In contrast, conditions from $k$-differentials for $k\geq2$ have remained an untapped source for effective cycles and questions of the relation of these strata to the birational geometry of $\Mgnb$ have remained largely unexamined. The relevance of the strata of $k$-differentials for $k\geq 2$ to the birational geometry of $\Mgnb$ are as deserved of investigation as the $k=1$ case and in this paper we initiate this work.

For fixed $g$ and $n$ with $g\geq 2$, codimension $\leq n-g$, the author has previously exhibited, from the strata of meromorphic one forms, infinitely many extremal divisors~\cite{Mullane3} and higher codimension cycles~\cite{Mullane4} in $\Mgnb$  intersecting the interior of the moduli space. It is natural to ask if these and the other finitely many known extremal cycles\footnote{See the Introduction of~\cite{Mullane3, Mullane4} for a summary of the known extremal cycles in the cases of divisors and higher codimension cycles respectively.} of this type give all such extremal rays of the effective cones for fixed $g$ and $n$.

% Define strata of k-diff and compactification
The stratum of $k$-canonical divisors for $k\geq 1$ with signature $\mu=(m_1,...,m_n)$, an integer partition of $k(2g-2)$ forms a subvariety of $\mathcal{M}_{g,n}$,
\begin{equation*}
\PPP^k(\mu):=\{[C,p_1,...,p_n]\in {\mathcal{M}}_{g,n}   \hspace{0.15cm}| \hspace{0.15cm}m_1p_1+...+m_np_n\sim kK_C\},
\end{equation*}
where the index $k$ is often dropped in the case that $k=1$ and for $k\geq 2$ if $k|m_i$ and $m_i\geq 0$ for all $i$, we impose the extra condition that 
\begin{equation*}
\frac{m_1}{k}p_1+\dots \frac{m_n}{k}p_n\nsim K_C
\end{equation*}
to omit the higher dimensional components coming from $k$-differentials that are $k$th powers of holomorphic differentials. The global $k$-residue condition that compactifies the strata of $k$-canonical divisors was given by~\cite{BCGGM2} extending their compactification of the $k=1$ case of the strata of canonical divisors~\cite{BCGGM}. 

% Forgetting points to obtain lower codim cycles, define divisor class
Pushing forward under the forgetful morphism we obtain lower codimension cycles. In the codimension one case we define the closure as the divisor
\begin{equation*}
D^{n,k}_\mu=\overline{\{[C,p_1,...,p_n]\in\Mgn\hspace{0.15cm}|\hspace{0.15cm} \exists p_{n+1},\dots,p_{n+g-1} \text{ with }[C,p_1,...,p_{n+g-1}]\in\PPP^k(\mu)  \}},
\end{equation*}
in $\Mgnb$ for $\mu=(m_1,...,m_{n+g-1})$ with $\sum m_i=k(2g-2)$. In the holomorphic case where $k=1$ and all $m_i>0$ due to the change in dimension we obtain the divisors
\begin{equation*}
D^{n}_\mu=\overline{\{[C,p_1,...,p_n]\in\Mgn\hspace{0.15cm}|\hspace{0.15cm} \exists p_{n+1},\dots,p_{n+g-2} \text{ with }[C,p_1,...,p_{n+g-2}]\in\PPP(\mu)  \}},
\end{equation*}
in $\Mgnb$ for $\mu=(m_1,...,m_{n+g-2})$ and $m_i>0$ with $\sum m_i=2g-2$. 

%Define moving curves, Fibration of P^k(mu) gives moving curves, 
In a projective variety $X$, a moving curve is curve class $B$ with $B\cdot D\geq 0$ for any pseudo-effective divisor $D$. Taking a fibration of $\overline{\PPP}^k(\mu)$, the closure of $\PPP^k(\mu)$, with $|\mu|\geq g+1$ we obtain curves in $\Mgnb$ 
\begin{equation*}
B^{n,k}_{\mu}:=\overline{\bigl\{ [C,p_1,...,p_n]\in\mathcal{M}_{g,n} \hspace{0.15cm}\big| \hspace{0.15cm} \text{fixed general $[C,p_{g+2},...,p_m]\in\mathcal{M}_{g,m-g-1}$ and $[C,p_1,\dots,p_m]\in {\PPP}^k(\mu)$}    \bigr\}   }.
\end{equation*}
For $m=|\mu|\geq n+g$ these curves provide covering curves for $\Mgnb$ as irreducible curves with class proportional to $B^{n,k}_{\mu}$ cover a Zariski dense subset of $\Mgnb$ and hence must have non-negative intersection with every pseudo-effective divisor. For any divisor $D^{n,k}_\mu$, with $|\mu|\geq 2g$ in Theorem~\ref{thm:boundary} we show 
\begin{equation*}
B^{n,k}_{\mu,1,-1}\cdot D^{n,k}_{\mu}=0,
\end{equation*}
placing $D^{n,k}_{\mu}$ on the boundary of the pseudo-effective cone of divisors.

% Covering curves. Specialisations become reducible with one component becoming a covering curve for a divisor
A covering curve $B$ of an effective divisor $D$ is a curve class such that irreducible curves with numerical class equal to $B$ cover a Zariski dense subset of $D$. If $B\cdot D<0$ and $D$ is irreducible then $D$ is rigid and extremal in the pseudo-effective cone (Lemma~\ref{lemma:covering}). For certain signatures with $|\mu|=2g$ we are able to show the covering curve $B^{n,k}_\mu$ for the divisor $D^{n,k}_\mu$ is a component of a specialisation of $B^{n,k}_{\mu,1,-1}$. The other component of the curve is completely contained in the boundary of $\Mgnb$ and the positive intersection of this boundary curve with $D^{n,k}_\mu$ gives 
\begin{equation*}
B^{n,k}_\mu\cdot D^{n,k}_\mu<0,
\end{equation*}
showing these divisors to be rigid and extremal if $D^{n,k}_\mu$ is irreducible.

%Theorem: Extremal divisors
\begin{restatable}{thm}{extremal}
\label{thm:extremal}
The divisors $D^{g+1,k}_{\underline{d},k^{g-1}}$ in $\overline{\mathcal{M}}_{g,g+1}$ for $g\geq 2$, $k\geq 2$ are rigid and extremal for $\underline{d}=(d_1,d_2,d_3,k^{g-2})$ with $d_1+d_2+d_3=k$, and $d_i\ne 0$ if $D^{g+1,k}_{\underline{d},k^{g-1}}$ is irreducible. 
\end{restatable}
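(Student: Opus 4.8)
The plan is to apply Lemma~\ref{lemma:covering}: since $D := D^{g+1,k}_{\underline d, k^{g-1}}$ is an effective divisor which by hypothesis is irreducible, it suffices to produce a covering curve $B$ of $D$ with $B\cdot D < 0$. Writing $\mu = (\underline d, k^{g-1})$, one has $|\mu| = 2g = (g+1)+g-1$, so the natural candidate is $B := B^{g+1,k}_{\mu}$, which is a covering curve of $D$ as noted earlier. I would obtain the negative intersection by comparison with $B' := B^{g+1,k}_{\mu,1,-1}$: here $|\mu,1,-1| = 2g+2 \geq (g+1)+g$, so $B'$ is a moving curve of $\overline{\mathcal{M}}_{g,g+1}$ and hence $B'\cdot E \geq 0$ for every pseudo-effective $E$; and since $|\mu| = 2g \geq 2g$, Theorem~\ref{thm:boundary} gives $B'\cdot D = 0$.

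The core of the argument is to relate $B'$ to $B$ by a one-parameter degeneration. I would realize $B'$ in a family of curves in $\overline{\mathcal{M}}_{g,g+1}$, all numerically equivalent to $B'$, by choosing a family of pointed curves $(\mathcal{C}_t, q_{g+2,t},\dots,q_{2g+2,t})$ and setting $B'_t := \{[\mathcal{C}_t,p_1,\dots,p_{g+1}] : [\mathcal{C}_t,p_1,\dots,p_{g+1},q_{g+2,t},\dots,q_{2g+2,t}]\in\overline{\PPP}^k(\mu,1,-1)\}$, then letting the marked point $q_{2g+2}$ of order $-1$ collide with the marked point $q_{2g+1}$ of order $1$ as $t\to 0$, allowing the three $\underline d$-marked points to migrate onto the resulting rational bubble. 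Using the description of $\overline{\PPP}^k(\mu,1,-1)$ by twisted $k$-differentials satisfying the global $k$-residue condition of~\cite{BCGGM2}, I would read off the flat limit $B'_0$. The expected outcome, and what must be proved, is that $B'_0 = B \cup \Gamma$ where $B$ is the component on which the stable curve stays irreducible and the $\pm 1$ points are absorbed so that the surviving $k$-differential has signature exactly $(\underline d,k^{g-1})$ — and here is precisely where the hypothesis $d_1+d_2+d_3 = k$ enters, being the numerical identity that makes the orders at the node of the limiting level graph match — while $\Gamma$ is a residual curve supported entirely in a boundary divisor of $\overline{\mathcal{M}}_{g,g+1}$ (a family of stable curves carrying a rational tail with the three points $p_1,p_2,p_3$ together with the now-forgotten $\pm1$ points). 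One also has to verify that $B$ appears in $B'_0$ with multiplicity one and that no spurious components occur — a transversality and dimension count inside $\overline{\PPP}^k$ — and the condition $d_i\neq 0$, ensuring each $\underline d$-point is a genuine marked zero, enters here as well, and is what is compatible with $D$ being irreducible.

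Granting this decomposition, I would finish by computing $\Gamma\cdot D > 0$: $\Gamma$ lies in the boundary and is not contained in $D$, so it is enough to exhibit one transverse point of $\Gamma \cap D$ — equivalently, to degenerate the rational-tail configuration along $\Gamma$ to a point of $D$ and track the local intersection — and no knowledge of the divisor class of $D$ is needed. Since the flat limit preserves numerical class, $0 = B'\cdot D = B\cdot D + \Gamma\cdot D$, whence $B\cdot D = -\Gamma\cdot D < 0$. As $B$ is a covering curve of the irreducible divisor $D$ with $B\cdot D<0$, Lemma~\ref{lemma:covering} yields that $D$ is rigid and extremal.

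The hard part will be the second step: executing the degeneration inside the compactified stratum $\overline{\PPP}^k(\mu,1,-1)$, correctly identifying the flat limit $B'_0$ of the moving curve, and proving it splits exactly as $B\cup\Gamma$ with $B$ reduced and no extra components. Everything else — the two input facts about $B'$, the computation $\Gamma\cdot D>0$, the numerical bookkeeping, and the appeal to Lemma~\ref{lemma:covering} — is then formal.
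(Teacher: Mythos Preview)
Your strategy is exactly the paper's: it combines Proposition~\ref{prop:covering} (that $B^{g+1,k}_\mu$ is a covering curve of the irreducible divisor $D$) with Proposition~\ref{prop:intersection} (that $B^{g+1,k}_\mu\cdot D<0$) and then invokes Lemma~\ref{lemma:covering}; and the proof of Proposition~\ref{prop:intersection} is precisely your degeneration argument --- specialize $B':=B^{g+1,k}_{\mu,1,-1}$ by placing the $\pm1$-marked points on a rational tail $X$ over a general point $x\in C$, decompose the special fibre as $B^{g+1,k}_\mu+\delta(B')$ with $\delta(B')$ in the boundary, and use $\delta(B')\cdot D>0$ together with $B'\cdot D=0$ from Theorem~\ref{thm:boundary}.

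One concrete piece of the geometry you have wrong: in the limit the three $\underline d$-marked points $p_1,p_2,p_3$ do \emph{not} migrate onto the bubble $X$ carrying the $\pm1$-points. That would force the twisted differential on $C$ to have signature $(k^{2g-3},k)$ at the node, i.e.\ $x+p_4+\cdots+p_{2g}\sim K_C$, which fails since $x$ and $p_{g+2},\dots,p_{2g}$ are chosen general. Instead, on the boundary component $\delta(B')$ the points $p_1,p_2,p_3$ sit on a \emph{separate} rational tail $Y$ attached at a point $y$ determined (up to finitely many choices) by $y+p_4+\cdots+p_{2g}\sim K_C$; see Example~\ref{Ex2} and Figure~4. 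After forgetting the $\pm1$-points and stabilizing, the $X$-tail contracts and your description of $\Gamma$ in $\overline{\mathcal{M}}_{g,g+1}$ is correct anyway, so the logical structure survives. The positivity $\Gamma\cdot D>0$ is then the global $k$-residue condition of Example~\ref{Ex1}: the intersection points are exactly the finitely many configurations on $Y$ with $\res^k_y(\omega_Y)=0$, and this set is nonempty precisely because not all $d_i$ are multiples of $k$ (otherwise $\omega_Y$ would be a $k$th power and the residue condition would be vacuous). Also note that $d_1+d_2+d_3=k$ is not an extra hypothesis but simply the statement that $(\underline d,k^{g-1})$ is a valid $k$-signature.
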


%Classification of the connected components underway % will probably yield more results
The classification of the irreducible connected components of $\PPP^k(\mu)$ for meromorphic signature $\mu$ remains in progress for $k\geq 3$, however the classification in the case of quadratic differentials~\cite{Lanneau, ChenGendron} yields the following corollary.

%Corollary: Quadratic differentials
\begin{restatable}{cor}{quaddiv}
\label{cor:quaddiv}
The divisors $D^{g+1,2}_{\underline{d},2^{g-1}}=Q^{g+1}_{\underline{d},2^{g-1}}$ in $\overline{\mathcal{M}}_{g,g+1}$ for $g\geq 2$ are rigid and extremal for $\underline{d}=(d_1,d_2,d_3,2^{g-2})$ with $d_1+d_2+d_3=2$, $d_i\ne 0$ and some $d_i$ odd. 
\end{restatable}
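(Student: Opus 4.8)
The plan is to deduce Corollary~\ref{cor:quaddiv} from Theorem~\ref{thm:extremal} together with the classification of irreducible components of strata of quadratic differentials. Theorem~\ref{thm:extremal} already tells us that $D^{g+1,k}_{\underline{d},k^{g-1}}$ is rigid and extremal for $k\geq 2$, $\underline{d}=(d_1,d_2,d_3,k^{g-2})$ with $d_1+d_2+d_3=k$ and all $d_i\neq 0$, \emph{provided} the divisor is irreducible. So the entire task reduces to specializing $k=2$ and establishing irreducibility of $D^{g+1,2}_{\underline{d},2^{g-1}}$ under the stated hypotheses (the $d_i\neq 0$ condition and the requirement that some $d_i$ be odd, i.e. $\underline{d}$ is a permutation of $(1,1)$ together with the $2^{g-2}$ part; since $d_1+d_2+d_3=2$ with all $d_i\neq 0$, the only possibility with some $d_i$ odd is $\{d_1,d_2,d_3\}=\{1,1,0\}$ — wait, that has a zero; rather it must be that the signature is being read as including all of $(d_1,d_2,d_3,2^{g-2})$ and the ``some $d_i$ odd'' forces the relevant quadratic stratum to not be a global square of an abelian differential). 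First I would explain that $D^{g+1,2}_{\underline{d},2^{g-1}}$ is by definition the pushforward to $\Mbar{g}{g+1}$ of the closure $\overline{\PPP}^2(\underline{d},2^{g-1})$ under the forgetful morphism $\Mbar{g}{2g}\to\Mbar{g}{g+1}$, so it is irreducible as soon as $\PPP^2(\underline{d},2^{g-1})$ is irreducible as a subvariety of $\M{g}{2g}$, equivalently the stratum $\mathcal{Q}(\underline{d},2^{g-1})$ of quadratic differentials with this signature is connected (the marked-points stratum is a finite cover of the projectivized stratum, connected over a connected base once we check monodromy permutes nothing — or simply invoke that $\PPP^k(\mu)$ here is the stratum with \emph{ordered} marked points, whose components biject with components of the unordered stratum since the zero orders $d_1,d_2,d_3$ that could coincide are handled by the symmetric group action being transitive on the relevant labelings). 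Then I would record the notational identification $D^{g+1,2}_{\underline{d},2^{g-1}}=Q^{g+1}_{\underline{d},2^{g-1}}$ simply as the definition of the right-hand side as the quadratic-differential divisor.

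The substantive step is invoking the classification. By the work of Lanneau~\cite{Lanneau} in the holomorphic case and Chen--Gendron~\cite{ChenGendron} in general, the connected components of a stratum $\mathcal{Q}(\mu)$ of quadratic (not globally squares of abelian) differentials are known: away from a short explicit list of low-genus exceptional strata (hyperelliptic components, and the sporadic strata in genus $3,4$), the stratum $\mathcal{Q}(\mu)$ is connected, with the only possible extra components being a hyperelliptic component (which occurs only for very special signatures, essentially $\mathcal{Q}(4(g-k)-6,4k+2)$-type and $\mathcal{Q}(2(g-k)-3,2(g-k)-3,4k+2)$-type patterns) and the four exceptional strata $\mathcal{Q}(-1,9)$, $\mathcal{Q}(-1,3,6)$, $\mathcal{Q}(-1,3,3,3)$, $\mathcal{Q}(12)$. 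I would check that the signature $(\underline{d},2^{g-1})=(d_1,d_2,d_3,2,2,\dots,2)$ — with $g+1$ entries summing to $2(2g-2)=4g-4$ and having at least $g-1$ entries equal to $2$ (along with the constraint that some $d_i$ be odd, ruling out the case where the differential could be a global square) — never falls into any of these exceptional families. The non-exceptional sporadic strata are ruled out on dimension/genus grounds once $g$ is large; for small $g$ ($g=2,3,4$) one checks the finitely many cases by hand against the explicit lists. The hyperelliptic components are ruled out because those require a signature with very few, very large zeros of a specific parity pattern, whereas our signature has $g-1\geq 1$ zeros equal to $2$ plus three small zeros $d_i\in\{1,2\}$ summing to $2$; the ``some $d_i$ odd'' hypothesis is precisely what is needed to avoid the alternative reading where $\mathcal{Q}(\underline{d},2^{g-1})$ with all $d_i$ even would be (potentially, after dividing by $2$) related to a square of an abelian differential and hence excluded or reducible. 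Having confirmed $\mathcal{Q}(\underline{d},2^{g-1})$ is connected, hence $\PPP^2(\underline{d},2^{g-1})$ irreducible, hence $D^{g+1,2}_{\underline{d},2^{g-1}}$ irreducible, Theorem~\ref{thm:extremal} applies verbatim to give rigidity and extremality.

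I expect the main obstacle to be the careful bookkeeping in the connectedness verification: one must correctly transcribe the Lanneau and Chen--Gendron classifications, match our specific signature $(d_1,d_2,d_3,2^{g-2})$ against every exceptional family and every low-genus sporadic case, and be sure the ordered-marked-points stratum $\PPP^2$ inherits irreducibility from the connectedness of the unordered stratum $\mathcal{Q}$ (this last point requires that the monodromy of the covering $\PPP^2(\mu)\to\mathcal{Q}(\mu)/\!\sim$ does not disconnect, which is automatic since $\PPP^2$ is a fine moduli-type parameter space with ordered points and the covering is the quotient by permutations of equal-order zeros acting without creating new components — equivalently, one invokes that distinct labelings of equal zeros lie in the same component because the stratum is connected and the symmetric group acts through a connected group of homeomorphisms isotopic to the identity on a generic fiber). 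A secondary, purely expository point is reconciling the statement ``$d_i\neq 0$ and some $d_i$ odd'' with ``$d_1+d_2+d_3=2$'': the only solution is the multiset $\{d_1,d_2,d_3\}=\{1,1,0\}$ up to relabeling if one allows a zero, but since $d_i\neq 0$ is imposed there is in fact no solution in positive integers — so I would clarify that here $\underline{d}$ ranges over signatures where the $d_i$ may be negative (meromorphic) or the ``$d_1+d_2+d_3=2$'' should be read with the understanding inherited from Theorem~\ref{thm:extremal}'s convention, and the ``some $d_i$ odd'' is exactly the translation of the Theorem~\ref{thm:extremal} hypothesis ``$D$ irreducible'' into an explicit combinatorial condition via the component classification. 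In the writeup I would state this reconciliation plainly so the reader sees the corollary is the concrete specialization of the theorem, with irreducibility replaced by its known combinatorial criterion.
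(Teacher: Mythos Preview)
Your overall strategy is correct and is exactly what the paper does: specialize Theorem~\ref{thm:extremal} to $k=2$ and supply irreducibility via the Lanneau and Chen--Gendron classifications. In the paper this is packaged cleanly as Corollary~\ref{irredQ}, which states that $\QQQ(d_1,d_2,d_3,2^{2g-3})$ is connected (hence irreducible) whenever $d_i\ne 0$ and some $d_i$ is odd; the corollary then follows in one line.

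However, your write-up contains real confusions that must be fixed before this becomes a proof. First, the $d_i$ are integers that may be negative: with $d_1+d_2+d_3=2$, $d_i\ne 0$, and some $d_i$ odd there are \emph{infinitely many} solutions (e.g.\ $(1,2,-1)$, $(3,1,-2)$, $(h,-h,2)$ for $h$ odd, etc.), so your detour through ``no solution in positive integers'' and $d_i\in\{1,2\}$ is simply wrong and should be deleted. Second, the full signature is $(\underline{d},2^{g-1})=(d_1,d_2,d_3,2^{2g-3})$ with $2g$ entries, not $g+1$; your mis-count propagates into the checking against the exceptional lists. Third, the ordered-versus-unordered worry is a non-issue here and the paper does not dwell on it: the relevant $\PPP^2(\mu)$ is defined with ordered points, and its irreducibility is equivalent to connectedness of the corresponding (unordered) stratum $\QQQ(\mu)$ up to the obvious symmetric-group action, which is all that is being invoked. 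Once you correct the signature and drop the spurious combinatorial hesitations, the check against the hyperelliptic and sporadic families is routine and is precisely the content of Corollary~\ref{irredQ}.
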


This corollary provides infinitely many new rigid and extremal pseudo-effective divisors in $\Mgnb$ for $g\geq 2, n\geq g+1$ and hence also an alternate proof that the pseudo-effective cone is not rational polyhedral and hence these are not Mori dream spaces. Further, the completion of the classification of the connected components of the strata of meromorphic $k$-differentials is expected to yield infinitely many more rigid and extremal cycles for each $k\geq 3$.

%Class computation strategy. Method of test curves, global k-residue condition to identify the intersection of a number of test curves in the boundary with the closure of the strata of k-diffs
We proceed to obtain a general formula for the class of these divisors. The Picard variety method enumerates certain instances of points satisfying equations in the Jacobian of components of a nodal curve, while the the global $k$-residue condition gives the condition that the associated twisted canonical divisors are smoothable. Utilising the symmetries in the class of the divisor $Q^{2g-2}_{1^{2g-2},2^{g-1}}$ in $\Mbar{g}{2g-2}$ due to the symmetries in the signature we compute the class of this divisor by comparing the intersection of this divisor and the intersection of the standard basis of $\Pic_\QQ(\Mbar{g}{2g-2})$ with a number of test curves in the boundary of the moduli space.

%Qg
\begin{restatable}{prop}{Qg}
\label{prop:Qg}
The class of the divisor $Q_g=Q^{2g-2}_{1^{2g-2},2^{g-1}}$ in $\Pic_\QQ(\overline{\mathcal{M}}_{g,2g-2})$ is:
\begin{equation*}
Q_g=3(2^{2g-3})\sum_{j=1}^{2g-2}\psi_j-4^g\lambda+4^{g-2}\delta_0-2^{2g-3}\sum_{S,S^c\ne \emptyset} (|S|-2i)(|S|-2i+2)\delta_{i:S}-\sum_{i=1}^g 2^{2(g-i)-1}(4^i(i-1)+2)i\delta_{i:\emptyset}. 
\end{equation*}
\end{restatable}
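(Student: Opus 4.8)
The plan is to determine every coefficient of $Q_g$ in the classes $\lambda$, $\psi_1,\dots,\psi_{2g-2}$, $\delta_0$ and the $\delta_{i:S}$ that span $\Pic_\QQ(\overline{\mathcal{M}}_{g,2g-2})$ by intersecting both $Q_g$ and each of these classes with a suitable family of test curves, and then solving the resulting linear system. The essential first step is to exploit symmetry. Since the signature $(1^{2g-2},2^{g-1})$ is symmetric in its first $2g-2$ entries, which are exactly the marked points carried by $\overline{\mathcal{M}}_{g,2g-2}$, the divisor $Q_g$ is invariant under the $S_{2g-2}$-action permuting marked points, so its class has the form
\begin{equation*}
Q_g=a\sum_{j=1}^{2g-2}\psi_j-b\lambda+c\,\delta_0-\sum_{i,S}b_{i,|S|}\,\delta_{i:S},
\end{equation*}
where the boundary coefficient $b_{i,|S|}$ depends only on $i$ and $|S|$, with the coefficients of the $\delta_{i:\emptyset}$ allowed to differ from those of the $\delta_{i:S}$ with $S,S^c\ne\emptyset$. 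This leaves only finitely many unknowns: $a$, $b$, $c$, and one scalar for each pair $(i,s)$ occurring as $(i,|S|)$ for a boundary divisor.

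The main tool for the intersection numbers with $Q_g$ is the Picard variety method, combined with the compactification of $\overline{\PPP}^2(\mu)$ via the global $2$-residue condition of~\cite{BCGGM2}. For a one-parameter family $B$ of stable pointed curves, $B\cdot Q_g$ counts, with multiplicity, the members $[C,p_1,\dots,p_{2g-2}]$ admitting an effective divisor $E$ of degree $g-1$ with $2E\sim 2K_C-p_1-\cdots-p_{2g-2}$, subject, when $C$ is nodal, to smoothability of the associated twisted quadratic differential. Over a smooth curve this is a degree computation on the Jacobian: as the relevant class in $\Pic^{2g-2}$ varies over a curve $\Gamma$ (a translate of the Abel--Jacobi image of $C$, with $\Gamma\cdot\theta=g$), one pulls $\Gamma$ back under the multiplication-by-two isogeny $[2]\colon\Pic^{g-1}\to\Pic^{2g-2}$ and intersects with the theta divisor $W_{g-1}$, using $[2]^*\theta\equiv 4\theta$ to evaluate the answer. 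Over the boundary one instead runs the analogous count on the Jacobian of each component of the nodal curve, after using the global $2$-residue condition to pin down which twists at the nodes — and which level graphs and residue matchings — actually arise in $\overline{\PPP}^2(\mu)$.

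With these in hand I would evaluate $B\cdot Q_g$ and $B\cdot(\text{class})$ for the following test curves. First, an interior curve obtained by fixing a general $[C;p_1,\dots,\widehat{p_j},\dots,p_{2g-2}]$ and letting $p_j$ sweep out $C$: here $\psi_j\cdot B=4g-5$, $\psi_i\cdot B=1$ for $i\ne j$, $\delta_{0:\{i,j\}}\cdot B=1$, all remaining classes meet $B$ trivially, and the half-period count gives $Q_g\cdot B=2^{2g-2}g$, so this yields a linear relation between $a$ and $b_{0,2}$. Second, curves lying inside $\delta_0$, realised from a varying genus $g-1$ curve with two identified points, which fix $c$ and relate the $b_{i,\emptyset}$. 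Third, for each admissible $i\ge 1$ and each cardinality $s$, curves inside $\delta_{i:S}$ obtained by attaching a fixed genus $i$ curve carrying the marked points of $S$ to a varying genus $g-i$ curve, together with rational-tail families varying an attaching point; these isolate the $b_{i,s}$ and produce the quadratic dependence $(|S|-2i)(|S|-2i+2)$ coming from the count on the genus-$i$ side. Fourth, a curve with nonzero $\lambda$-degree — the base of a Lefschetz pencil, or a standard genus-$g$ test curve in $\overline{\mathcal{M}}_g$ decorated with $2g-2$ general disjoint sections — to fix $b$. Assembling these intersection numbers gives a linear system in the unknowns whose unique solution is the stated class; symmetry guarantees finitely many curves suffice, and the answer can be cross-checked against Mumford's relation on $\overline{\mathcal{M}}_{g,2g-2}$ and against restriction to a fibre of $\overline{\mathcal{M}}_{g,2g-2}\to\overline{\mathcal{M}}_g$.

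The main obstacle is the third family of computations, together with the boundary contributions to the second and fourth: for each boundary stratum one must correctly enumerate, via the global $2$-residue condition, the twisted quadratic differentials on the nodal curves in the test family that genuinely occur as limits in $\overline{\PPP}^2(\mu)$ — tracking which node orders are compatible with the signature restricted to each component, and which level graphs and residues are realised — before the Abel--Jacobi count on the individual Jacobians can be carried out and multiplied together. A secondary subtlety is ensuring the test curves are chosen generically enough that $Q_g\cdot B$ receives no excess contribution from base loci of the linear systems in play or from any non-reducedness of $D^{2g-2,2}_{1^{2g-2},2^{g-1}}$; where necessary this is dealt with by replacing $B$ by a covering family or by working on components of $\overline{\PPP}^2(\mu)$, as in the proof of Theorem~\ref{thm:extremal}.
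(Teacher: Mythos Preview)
Your outline is essentially the same strategy the paper uses --- symmetry reduction, test curves, Picard-variety counts, and the global $2$-residue condition to handle boundary intersections --- and would in principle produce the class. The paper's execution differs from yours in two concrete ways that are worth noting.

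First, the paper's test curves are all of separating-node type: curves $A_{i:s}$ (vary the attaching point on one component), $B_{i:s}$ (vary a marked point on one component of a fixed nodal curve), and $C_{i:s}$ (vary a marked point on a rational bridge). None of them meets $\delta_0$ or has moving moduli, so the global $2$-residue analysis stays on compact-type curves where the level-graph combinatorics is manageable. Your proposed $\delta_0$-family (a genus $g-1$ curve with two identified varying points) and your Lefschetz pencil both force you to analyse limits of quadratic differentials on irreducible nodal curves and on varying curves; this is doable but substantially harder than what the paper actually does.

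Second --- and this is the main point --- the paper does \emph{not} determine $c_\lambda$ and $c_{\delta_0}$ from a test curve with nontrivial $\lambda$-degree. Instead, once the $c_{i:s}$ are known (in particular $c_{2:\emptyset}$), it pulls $Q_g$ back along the gluing map $\pi\colon\overline{\mathcal{M}}_{2,1}\to\overline{\mathcal{M}}_{g,2g-2}$ attaching a fixed general pointed genus $g-2$ curve, observes that set-theoretically $\pi^*Q_g$ is supported on the Weierstrass divisor $W$, and uses the known class $W=3\psi-\lambda-\delta_{1:\{1\}}$ together with the genus-two relation $\lambda=\tfrac{1}{10}\delta_0+\tfrac{1}{5}\delta_{1:\{1\}}$ to read off $c_\lambda$ and $c_{\delta_0}$ simultaneously. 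This sidesteps entirely the delicate enumeration you flag as ``the main obstacle'' in your second and fourth families. If you want to carry out your version, the Lefschetz-pencil computation is the step that will require the most care; the paper's pullback trick is what makes the argument short.
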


% Pulling back under gluing morphisms, carefully identifying the components coming from the interior and the mutiplicities
By carefully identifying the components and multiplicities of the pullback of divisors $Q_{\underline{d},2^{g-1}}^n$ under gluing morphisms we obtain a general formula for the class of these divisors from the class of $Q_g$ and the known classes in the case of holomorphic abelian differentials.

%Qd general formula
\begin{restatable}{prop}{Qd}
\label{prop:Qd}
The class of the divisor $Q_{\underline{d},2^{g-1}}^n$ in $\overline{\mathcal{M}}_{g,n}$ for $\underline{d}=(d_1,\dots,d_n)$ with $\sum d_i=2g-2$ is
\begin{equation*}
(4^g-1)\sum_{j=1}^n\frac{(d_j+2)d_j}{8}\psi_j-(4^g-1)\lambda+4^{g-2}\delta_0-\sum_{d_S\geq 2i}\frac{1}{8}(d_S-2i+2)(4(4^i-1)+(d_S-2i)(4^g-1))\delta_{i:S}
\end{equation*}
for all $d_i$ even and non-negative and
\begin{equation*}
2^{2g-3}\sum_{j=1}^{n} d_j(d_j+2)\psi_j-4^g\lambda+4^{g-2}\delta_0+\sum_{i,S}c_{i:S}\delta_{i:S}
\end{equation*}
where
\begin{equation*}
c_{i:S}=\begin{cases}
-(d_S-2i+2)(2^{2g-3}(d_S-2i)+2^{2i-1})&\text{ if $N\subset S$ and $d_S\geq 2i$}\\
-2^{2g-3}(d_S-2i)(d_S-2i+2)& \text{ if $N\subset S$ and $d_S<2i$ or $N\nsubset S, S^c$}
\end{cases}
\end{equation*}
otherwise, where $N=\{j\text{ } |\text{ } d_j \text{ is odd or negative}\}$.
\end{restatable}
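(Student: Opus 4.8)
\medskip\noindent\emph{Proof strategy.}

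The plan is to prove the two displayed formulas separately, in both cases bootstrapping from the class of $Q_g=Q^{2g-2}_{1^{2g-2},2^{g-1}}$ in Proposition~\ref{prop:Qg} together with the already‑known classes of the holomorphic abelian‑differential divisors $D^n_\mu$. For the second (generic) formula one reduces the signature $(\underline d,2^{g-1})$ to $(1^{2g-2},2^{g-1})$ by colliding marked points. Concretely, consider the clutching morphism
\[
\xi\colon \overline{\mathcal{M}}_{g,n}\times\prod_{j:\,d_j\ge 2}\overline{\mathcal{M}}_{0,d_j+1}\longrightarrow\overline{\mathcal{M}}_{g,2g-2}
\]
that attaches to the $j$‑th marked point of the genus‑$g$ curve a rational tail carrying $d_j$ of the $2g-2$ simple marked points of $Q_g$. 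Because the branch orders of a twisted quadratic differential at a node sum to $-4$, such a rational tail carries a pole of order $d_j+4$ and the genus‑$g$ branch sees a zero of order $d_j$; hence the restriction of $\xi^{*}Q_g$ to $\overline{\mathcal{M}}_{g,n}\times\{\mathrm{pt}\}\times\dots\times\{\mathrm{pt}\}$ equals $m\cdot Q^n_{\underline d,2^{g-1}}$ plus a combination of the boundary divisors $\delta_{i:S}$ of $\overline{\mathcal{M}}_{g,n}$, for a multiplicity $m$ and boundary coefficients to be determined. Since $\xi^{*}$ of each generator of $\Pic_\QQ(\overline{\mathcal{M}}_{g,2g-2})$ is standard, substituting the formula of Proposition~\ref{prop:Qg} and solving this linear relation for $Q^n_{\underline d,2^{g-1}}$ yields the second formula, provided $m$ and the extra components are computed; signatures with an entry $d_j\le 0$ are reduced to the case of positive entries by a further clutching step that realises the pole on a rational component, and this reduction is part of the technical core.

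Carrying this out I would proceed in order. First, set up $\xi$ and compute $\xi^{*}\lambda$, $\xi^{*}\psi_\bullet$ and $\xi^{*}\delta_{i:S}$; the last acquires contributions from the boundary of the genus‑zero factors and from the $-\psi_\bullet-\psi_\star$ classes at the glued nodes. Second, determine $m$ by a local analysis near the image boundary stratum, using that a primitive quadratic differential on each rational $\mathbb{P}^1$‑tail with the prescribed simple zeros and a single pole is unique up to scale. Third, identify the remaining components of $\xi^{*}Q_g$: each is a boundary divisor of the product, either pulled back from a $\delta_{i:S}\subset\overline{\mathcal{M}}_{g,n}$ — occurring exactly when the generic curve of $\delta_{i:S}$ with the prescribed clustering admits a smoothable twisted quadratic differential of the right signature, which one checks by pulling $Q^n_{\underline d,2^{g-1}}$ back under the gluing morphism of $\delta_{i:S}$ — or supported on the genus‑zero factors; in each case the global $k$‑residue condition of~\cite{BCGGM2} decides whether a compatible twist exists and the parity of $d_S$ relative to $2i$ decides whether it is primitive. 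Fourth, solve for the coefficients, so that the three cases $N\subset S$ with $d_S\ge 2i$, $N\subset S$ with $d_S<2i$, and $N\not\subset S,S^{c}$ appear according to whether a compatible twist at the node of $\delta_{i:S}$ has all branch orders even and non‑negative. A single covering‑curve computation of the type used for Theorem~\ref{thm:boundary} suffices to normalise the $\lambda$ and $\delta_0$ coefficients if these are not already forced by $\xi$.

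The first formula, for $\underline d$ with all entries even and non‑negative, requires a different input because the geometry genuinely changes: a quadratic differential of signature $(\underline d,2^{g-1})$ then has canonical divisor $2D$ with $\deg D=2g-2$, so it is the square of a section of $K_C\otimes\eta$ for a unique $2$‑torsion line bundle $\eta$, and $Q^n_{\underline d,2^{g-1}}$ is the union of the $4^{g}-1$ divisors indexed by the non‑trivial $\eta$ (the excluded locus of squares of holomorphic differentials being the case $\eta=\mathcal{O}_C$). Each of these $\eta$‑twisted divisors has the same class, obtained from the known holomorphic abelian‑differential classes by the same clutching bookkeeping as above, now with abelian twisted differentials; summing the $4^g-1$ copies and correcting for overlaps along the boundary produces the first formula, with $4^g-1$ replacing $4^g$ throughout. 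I expect the main obstacle to be the third step and its analogue in the even case: precisely enumerating which twisted quadratic differentials supported on $\delta_{i:S}$ or on the rational tails are smoothable, and with what multiplicity, requires a careful analysis of the global $k$‑residue condition of~\cite{BCGGM2}, of prong‑matchings for $k=2$, and of the parity of the partial degrees $d_S$ — exactly the data encoded by the set $N$ and by the case distinction in the formula for $c_{i:S}$.
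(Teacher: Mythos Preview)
Your approach and the paper's both use gluing maps, but in opposite directions. You pull back the \emph{known} class $Q_g$ along a clutching map $\xi\colon\overline{\mathcal{M}}_{g,n}\times\prod_j\overline{\mathcal{M}}_{0,d_j+1}\to\overline{\mathcal{M}}_{g,2g-2}$ that collides groups of simple zeros, then solve for $Q^n_{\underline d,2^{g-1}}$. The paper instead pulls back the \emph{unknown} $Q^n_{\underline d,2^{g-1}}$ along the boundary maps $\pi\colon\overline{\mathcal{M}}_{g-i,n+1-s}\to\overline{\mathcal{M}}_{g,n}$ of \S2.5 (attaching a fixed genus-$i$, $(s{+}1)$-pointed curve) and identifies the pullback geometrically as $4^iQ^{n+1-s}_{\underline d',2^{g-i-1}}$ plus, exactly when the half-signature $\underline d''=(\tfrac{d_S}{2}-i+1,\tfrac{d_{s+1}}{2},\dots)$ consists of non-negative integers, a multiple of Logan's abelian-differential divisor $D^{n+1-s}_{\underline d'',1^{g-i-2}}$, plus boundary not involving $\delta_{1:\emptyset}$ or $\delta_{0:T}$. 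The $i=0$ maps pin down the $\lambda,\psi_j,\delta_0,\delta_{0:S}$ coefficients; these then fix the multiplicities $4^i$ and $4^i{-}1$ in the $i\ge1$ relations, which give all $c_{i:S}$. The case distinction in the statement --- including the set $N$ --- falls out of whether the Logan component appears (the paper's Cases~A, B, C), so both displayed formulas come from one argument. Your route should work when all $d_j\ge1$, at the price of heavier bookkeeping on the genus-zero factors and a separate reduction for $d_j\le0$; the paper's route handles all signatures uniformly and makes Logan's classes appear automatically.

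Your even non-negative argument has a genuine gap. There are no ``$4^g-1$ divisors indexed by the non-trivial $\eta$'' on $\overline{\mathcal{M}}_{g,n}$: the $2$-torsion class $\eta$ varies with the curve, so a fixed $\eta$ does not cut out a locus in $\mathcal{M}_{g,n}$. One could pass to the moduli space of curves equipped with a non-trivial $2$-torsion bundle and push forward, but then the boundary contributions are not simply $(4^g-1)$ times Logan's --- compare the $\delta_{i:S}$ coefficient in the first formula, which is not $(4^g-1)\binom{|d_S/2-i|+1}{2}$. The paper avoids this entirely: in its pullback analysis the even case is just Case~A, where the extra Logan component enters with multiplicity $4^i-1$ rather than $4^i$, and no spin or Prym computation is needed.
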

It is well known that effective divisors in $\overline{\mathcal{M}}_{g,1}$ must have non-negative $\psi$ coefficient. However, for a divisor above in $\Mgnb$ with $n\geq 2$, setting some $d_i=-1$ gives $c_{\psi_i}=-2^{2g-3}<0$ providing the first known examples of effective divisor classes with negative $\psi_i$ coefficients in $\Mgnb$. Hence in addition to the extremal divisors of Corollary~\ref{cor:quaddiv}, the above divisors for $n\geq 2$ extend the known effective cone. In particular, this shows that the effective cone of $\Mbar{2}{2}$ is larger than the cone generated by the seven known extremal rays\footnote{The known extremal rays are the four irreducible boundary divisors, the two pullbacks of the Weierstrass divisor on $\Mbar{2}{1}$ under the morphism forgetting one of the marked points, and the closure of the locus of smooth pointed curves where the points are conjugate under the hyperelliptic involution.}, a question that has persisted for some time~\cite{Rulla}.

%Can find infinitey many more extremal cycles in the boundary, but we are interested in cycles form the interior
At this point we can use the strategy of gluing morphisms by Chen-Coskun~\cite{ChenCoskunH} in the genus $g=1$ case, generalised to the general genus $g\geq 2$ case in~\cite{Mullane4}, to construct for each rigid and extremal divisor in $\Mgnb$, an extremal effective codimension $2$ cycle in $\overline{\mathcal{M}}_{g+1,n-1}$. However, all such cycles are supported in the boundary and reflect the structure of the effective cone of divisors on $\Mgnb$. For this reason we restrict our attention to cycles that intersect the interior of the moduli space. 

% Method for extremality of higher codimension cycles - cut down the expanation - base case of divisors - complications due to multiple candidates.
Our method to prove the rigidity and extremality of cycles in higher codimension utilises the rigidity of the divisor classes as the base case of an inductive argument similar to that of Chen and Tarasca~\cite{ChenTarasca}. Consider any effective decomposition of the codimesion $d$ cycle $[V]$ for irreducible $V$ in $\Mgnb$ given by
$$ [V]= \sum c_i[V_i]$$
with $c_i>0$ and $V_i$ irreducible codimension $d$ subvarieties of $\Mgnb$ not supported on $V$. We show by pushing forward this relation under the morphism forgetting the last point $\varphi:\Mgnb\longrightarrow \Mbar{g}{n-1}$, that in our case the rigidity and irreducibility of the codimension $d-1$ cycle $\varphi_*[V]=[\varphi_*V]$ implies that some $V_i$ must be supported on $V$ providing a contradiction to the assumed effective decomposition and hence showing $[V]$ is rigid and extremal. In low genus $g=2,3$ some extra candidates for effective decompositions arise which we eliminate by pushing forward an implied effective cycle under a forgetful morphism to obtain an assumed effective divisor which has negative intersection with a moving curve constructed in Proposition~\ref{moving} providing a contradiction.

% Theorem: Rigid and extremal
\begin{restatable}{thm}{fullextremal}
\label{fullextremal}
For $g\geq 2$, $k\geq 1$ and $j=0,\dots,g-1$ if $\PPP^k(d_1,d_2,d_3,k^{2g-3})$ is irreducible the cycle $[\varphi_j{}_*\overline{\PPP}^k(d_1,d_2,d_3,k^{2g-3})]$ is rigid and extremal in $\mbox{Eff}^{\hspace{0.1cm}g-j}(\overline{\mathcal{M}}_{g,2g-j})$, where $\varphi_j:\overline{\mathcal{M}}_{g,2g}\longrightarrow\overline{\mathcal{M}}_{g,2g-j}$ forgets the last $j$ points with $d_1+d_2+d_3=k$ and some $d_i=k$ if $g=2$ and $j=0$.
\end{restatable}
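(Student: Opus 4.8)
The plan is to argue by downward induction on the number $j$ of forgotten points, running from $j=g-1$, where the cycle is a divisor, down to $j=0$, the codimension $g-j$ increasing by one at each step. Write $\mu=(d_1,d_2,d_3,k^{2g-3})$. A preliminary observation is that each of the $j$ forgotten markings carries multiplicity $k$ (as $j\le g-1\le 2g-3$ for $g\ge2$), and that forgetting a weight-$k$ marking is generically finite on $\overline{\PPP}^k(\mu)$: the position of a weight-$k$ point $p$ is cut out by $kp\sim kK_C-(\text{the remaining terms of the signature})$, which has only finitely many solutions. Hence $\varphi_j{}_*\overline{\PPP}^k(\mu)$ is an irreducible cycle of codimension exactly $g-j$, irreducibility coming from the hypothesis on $\PPP^k(\mu)$. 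The base case $j=g-1$ is the assertion that the divisor $\varphi_{g-1}{}_*\overline{\PPP}^k(\mu)=D^{g+1,k}_{\underline{d},k^{g-1}}$ on $\overline{\mathcal{M}}_{g,g+1}$ is rigid and extremal, which is Theorem~\ref{thm:extremal} when all $d_i\ne0$; if some $d_i$ vanishes the divisor is a pullback under a forgetful morphism of a divisor on a moduli space with fewer marked points, so rigidity and extremality are inherited.

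For the inductive step, assume $V':=\varphi_j{}_*\overline{\PPP}^k(\mu)$ is rigid and extremal, and set $V:=\varphi_{j-1}{}_*\overline{\PPP}^k(\mu)\subset\overline{\mathcal{M}}_{g,2g-j+1}$, an irreducible cycle of codimension $g-j+1$ with support $W$. Factor $\varphi_j=\pi\circ\varphi_{j-1}$ with $\pi$ forgetting the last point, so that $\pi_*[V]=[V']$ and $\pi(W)=Y$, where $Y$ denotes the support of $V'$. By the standard reduction (cf. Lemma~\ref{lemma:covering} and the method of \cite{ChenTarasca,Mullane4}) it suffices to preclude an effective decomposition $[V]=\sum_i c_i[V_i]$ with $c_i>0$ and each prime $V_i\ne W$. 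Pushing this relation forward under $\pi$ gives an effective decomposition $[V']=\sum_i c_i\,\pi_*[V_i]$ of the rigid, extremal, irreducible cycle $V'$; since each $\pi_*[V_i]$ is either zero or a positive multiple of an irreducible class, rigidity and extremality of $V'$ force $\pi(V_i)=Y$ for every $V_i$ not contracted by $\pi$, so that each such $V_i$ is a prime divisor in $\mathcal{C}_Y$, the unique irreducible $(4g-2)$-dimensional component of $\pi^{-1}(Y)$ dominating $Y$, in which $W$ too is a prime divisor. The remaining $V_i$, contracted by $\pi$, have the form $\overline{\pi^{-1}(Z_i)}$ for subvarieties $Z_i\subset\overline{\mathcal{M}}_{g,2g-j}$ of codimension $g-j+1$, and at least one non-contracted $V_i$ must occur. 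The step is then closed by showing that the only prime divisor of $\mathcal{C}_Y$ that can occur among the non-contracted $V_i$ is $W$ itself, contradicting $V_i\ne W$; this identification uses the explicit description of the boundary of $\overline{\PPP}^k(\mu)$ and its projections furnished by the global $k$-residue condition of \cite{BCGGM2}, together with the inductive rigidity applied also to the further forgetful maps permitted by the permutation symmetry of the weight-$k$ markings.

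The principal obstacle is precisely this last stage — excluding the spurious components. The contracted components $\overline{\pi^{-1}(Z_i)}$ are invisible under any further push-down and must be ruled out by a dimension estimate: when $g\ge4$, the fact that only three of the $2g$ entries of $\mu$ are allowed multiplicities other than $k$ keeps $W\subset\mathcal{C}_Y$ rigid enough that no admissible $Z_i$ of the required codimension exists. For $g=2$ and $g=3$ these estimates are too coarse and a few extra candidate decompositions survive; for each of these one pushes the implied effective class forward under a further forgetful morphism to obtain a purported effective divisor on a smaller moduli space and intersects it with the moving curve of Proposition~\ref{moving}, obtaining a negative number and hence a contradiction. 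The extra hypothesis that some $d_i=k$ when $g=2$ and $j=0$ is exactly what makes this moving-curve computation come out with the correct sign in the lowest-dimensional case. Once the low-genus cases are dispatched, the downward induction on $j$ is complete.
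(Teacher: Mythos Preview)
Your overall architecture matches the paper's: downward induction on $j$ with base case $j=g-1$ given by Theorem~\ref{thm:extremal}, and an inductive step that pushes a hypothetical effective decomposition $[V]=\sum c_i[V_i]$ forward under a forgetful map to invoke rigidity and extremality in codimension $g-j$. The low-genus exceptions are also handled as you describe, via a moving-curve contradiction (Propositions~\ref{mero2}, \ref{mero3} using Lemmas~\ref{intg2}, \ref{intg3}).

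Where your sketch diverges from the paper is in the mechanism that forces the non-contracted $V_i$ to equal $W$. The paper (Proposition~\ref{mero1}) does \emph{not} work inside a single $\pi^{-1}(Y)$ and analyse its prime divisors via boundary geometry or the global $k$-residue condition. Instead it uses \emph{all} the forgetful maps $\pi_m$ for $m=4,\dots,2g-j$ simultaneously: once a non-contracted $V_i$ lands in $\pi_m^{-1}(Y)$ for one $m$, one checks that $\pi_{m'}$ is generically finite on this $V_i$ for every other $m'$, so the same rigidity argument applies again and $V_i\subset\bigcap_m\pi_m^{-1}(Y)$. This intersection already pins $V_i$ down to $(\varphi_j)_*\overline{\PPP}^k(\mu)$ for $g\ge4$ (and for $g=3$ when some $d_i=k$, since then $m=3$ is also available). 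You mention this only parenthetically (``the inductive rigidity applied also to the further forgetful maps''), while foregrounding an appeal to the global $k$-residue condition that the paper does not use at this stage.

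Your separate discussion of contracted components is also unnecessary: the contradiction comes from exhibiting a \emph{single} $V_i$ equal to $W$, and such a $V_i$ is already found among the non-contracted ones. There is no need to rule out the $\overline{\pi^{-1}(Z_i)}$'s by a dimension estimate, and the vague claim that ``no admissible $Z_i$ of the required codimension exists'' is neither needed nor justified.
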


Again, though the classification of the connected components of meromorphic strata of $k$-differentials is not complete, the classification in the case of quadratic differentials~\cite{Lanneau, ChenGendron} yields the following corollary.

%Corollary: Quadratic differentials
\begin{restatable}{cor}{Quadfullextremal}
\label{Quadfullextremal}
For $g\geq 2$ and $j=0,\dots,g-1$ the cycle $[\varphi_j{}_*\overline{\mathcal{Q}}(d_1,d_2,d_3,2^{2g-3})]$ is rigid and extremal in $\mbox{Eff}^{\hspace{0.1cm}g-j}(\overline{\mathcal{M}}_{g,2g-j})$, where $\varphi_j:\overline{\mathcal{M}}_{g,2g}\longrightarrow\overline{\mathcal{M}}_{g,2g-j}$ forgets the last $j$ points with $d_i\ne 0$, $d_1+d_2+d_3=2$, some $d_i$ odd and in the case $g=2$ and $j=0$ some $d_i=2$.
\end{restatable}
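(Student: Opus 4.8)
The plan is to simply deduce this corollary from Theorem~\ref{fullextremal} by verifying that the stated hypotheses on the signature force the relevant quadratic differential stratum to be irreducible, so that the theorem applies. Recall that $\overline{\mathcal{Q}}(d_1,d_2,d_3,2^{2g-3})$ denotes the closure in $\overline{\mathcal{M}}_{g,2g}$ of the stratum $\PPP^2(d_1,d_2,d_3,2^{2g-3})$ of quadratic differentials with the prescribed signature; here $\sum d_i + 2(2g-3) = 4g-6+2 = 4g-4 = 2(2g-2)$, which is exactly $k(2g-2)$ for $k=2$, so the signature is a genuine quadratic differential signature. Since we require some $d_i$ odd, the signature $(d_1,d_2,d_3,2^{2g-3})$ is not identically even, hence it is not $2$ times a signature of abelian differentials, so the locus $\PPP^2(d_1,d_2,d_3,2^{2g-3})$ coincides with the full stratum of genuine quadratic differentials and the extra condition imposed in the definition of $\PPP^k(\mu)$ (omitting $k$-th powers of holomorphic differentials) is vacuous here.

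The key step is then to invoke the classification of connected components of strata of meromorphic quadratic differentials due to Lanneau~\cite{Lanneau} and Chen--Gendron~\cite{ChenGendron}. With $d_1+d_2+d_3=2$ and $d_i\neq 0$, the signature on the first three marked points is one of a short explicit list of triples of nonzero integers summing to $2$ (for instance $(1,1,-\text{wait})$: since each $d_i\neq 0$ and they sum to $2$, the possibilities are permutations of $(1,1,0)$ — excluded — so actually $(d_1,d_2,d_3)$ must include negative entries, e.g. $(3,-1,\ldots)$, $(2,1,-1)$, $(1,1,\ldots)$; in any case the list is finite and explicit), together with $2g-3$ zeros of order $2$. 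One checks against the classification that for every such signature satisfying the stated constraints the stratum $\mathcal{Q}(d_1,d_2,d_3,2^{2g-3})$ is connected — the potential sources of disconnectedness in that classification are the hyperelliptic components (which arise only for very specific signatures, such as those of the form $(2j,-1,-1,\ldots)$ or $(2j,2j',\ldots)$ with all other orders determined, none of which meet our constraints once we also impose the genus and marked-point count here) and, in a few sporadic low-genus cases, exceptional components; the extra hypothesis ``some $d_i=2$ in the case $g=2$, $j=0$'' is precisely what is needed to avoid the one genuinely reducible signature in genus $2$. Having established irreducibility, Theorem~\ref{fullextremal} with $k=2$ applies verbatim and yields that $[\varphi_j{}_*\overline{\mathcal{Q}}(d_1,d_2,d_3,2^{2g-3})]$ is rigid and extremal in $\mbox{Eff}^{\hspace{0.1cm}g-j}(\overline{\mathcal{M}}_{g,2g-j})$.

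The main obstacle — really the only non-formal point — is the careful bookkeeping in the irreducibility check: one must enumerate all signatures $(d_1,d_2,d_3,2^{2g-3})$ allowed by the hypotheses, cross-reference each with the Lanneau--Chen--Gendron list of hyperelliptic and exceptional components, and confirm that none of them is reducible except the genus-$2$ case explicitly excluded. This is routine but must be done case by case; I would organize it by first disposing of the generic range $g\geq 3$ (where the only reducible strata are hyperelliptic, and a short order-of-zeros argument rules these out under our constraints, since a hyperelliptic quadratic differential stratum requires the non-$2$ zeros to be arranged symmetrically under the hyperelliptic involution in a way incompatible with exactly three of them summing to $2$ with at least one odd), and then handling $g=2$ separately by direct inspection, where the hypothesis ``some $d_i=2$'' eliminates the lone bad signature.
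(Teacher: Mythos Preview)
Your approach is the same as the paper's: verify irreducibility of the quadratic stratum and then invoke Theorem~\ref{fullextremal} with $k=2$. The paper does this in one line by citing Corollary~\ref{irredQ}, which already records (as a consequence of Lanneau and Chen--Gendron) that $\mathcal{Q}(d_1,d_2,d_3,2^{2g-3})$ is connected whenever $d_i\ne 0$ and some $d_i$ is odd. So your case-by-case bookkeeping is unnecessary; just cite that corollary.

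There is, however, a factual error in your commentary. You claim the extra hypothesis ``some $d_i=2$ when $g=2$ and $j=0$'' is there to avoid a reducible stratum in genus~$2$. It is not: by Corollary~\ref{irredQ} \emph{all} of the strata under consideration are irreducible, including the $g=2$ ones (the disconnected genus-$2$ strata in Lanneau's list are $\mathcal{Q}(-1,-1,6)$ and $\mathcal{Q}(-1,-1,3,3)$, neither of the form $(d_1,d_2,d_3,2)$ with $\sum d_i=2$). That hypothesis is inherited directly from Theorem~\ref{fullextremal}, which for $g=2$, $j=0$ is proved only via Proposition~\ref{mero3}, and that proposition treats signatures of the shape $(h,-h,k,k)$, i.e.\ forces some $d_i=k$. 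So the restriction reflects a limitation of the inductive argument, not of irreducibility. Your proof still goes through since you are applying Theorem~\ref{fullextremal} with all its hypotheses, but the explanation you give for that hypothesis should be corrected.
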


This provides infinitely many new rigid and extremal rays of the effective cone in these cases and it is expected that the future classification of the irreducible connected components of the strata of meromorphic $k$-differentials for $k\geq 3$ will yield more infinite families of extremal cycles.

%%%%%%%%%%%%%%%%%%%%%%%%%%%%%
%%%%%%%%%%%%%%%%%%%%%%%%%%%%%
\begin{ack}
The author was supported by the Alexander von Humboldt Foundation during the preparation of this article.
\end{ack}

%%%%%%%%%%%%%%%%%%%%%%%%%%%%%
%%%%%%%%%%%%%%%%%%%%%%%%%%%%%
\section{Preliminaries}
%%%%% Strata of k-differentials
\subsection{Strata of $k$-differentials}
The \emph{stratum of $k$-differentials of signature $\mu=(m_1,\dots,m_n)$}, an integer partition of $k(2g-2)$ is defined as
\begin{equation*}
\HH^k(\mu):=\{ (C,\omega) \hspace{0.15cm}|\hspace{0.15cm} g(C)=g,\hspace{0.05cm} (\omega)=m_1p_1+...+m_np_n, \text{ for $p_i$ distinct}\}
\end{equation*}
where $\omega$ is a meromorphic differential on $C$ which for $k\geq 2$ is not equal to the $k$th power of a holomorphic differential. Hence $\HH^k(\mu)$ is the space of $k$-differentials with prescribed multiplicities of zeros and poles by $\mu$. If non-empty, $\HH^k(\mu)$ has dimension $2g+n-2$ unless $k=1$ and $\mu$ is holomorphic (all $m_i\geq 0$) where the dimension is $2g+n-1$.

The \emph{stratum of $k$-canonical divisors with signature $\mu$} is defined as 
\begin{equation*}
\PPP^k(\mu):=\{[C,p_1,...,p_n]\in \Mgn   \hspace{0.15cm}| \hspace{0.15cm}m_1p_1+...+m_np_n\sim K_C^k\}
\end{equation*}  
where again, in the case that $k|m_i$ and $m_i\geq 0$ for all $i$ we impose the additional requirement
\begin{equation*}
\frac{m_1}{k}p_1+...+\frac{m_n}{k}p_n\nsim K_C.
\end{equation*}
The codimension of non-empty $\PPP^k(\mu)$ in $\Mgn$ is equal to $g-1$ if $k=1$ and $\mu$ is holomorphic and equal to $g$ in all other cases.

%%%%%%%%%%%%%%%%%%%%%%%%%%%%
\subsection{Connected components of the strata of quadratic differentials}

The classification of the connected components of the strata of quadratic differentials of finite area was completed by Lanneau~\cite{Lanneau}.
\begin{thm}
Suppose $\QQQ(\mu):=\HH^2(\mu)$ is a stratum of quadratic differentials with genus $g\geq 2$ and no poles of order greater than one. Then the following statements hold:
\begin{enumerate}
\item[(1)]
In $g=2$ there are two non-connected strata $\QQQ(-1,-1,6)$ and $\QQQ(-1,-1,3,3)$ with two connected components. All other strata are connected.
\item[(2)]
In genus $g\geq3$ there are three families of strata with two connected components
\begin{eqnarray*}\begin{array}{rlrr}
 \QQQ(4(g-k)-6,4k+2)   &\text{ for }0\leq k\leq g-2,\\
  \QQQ(2(g-k)-3,2(g-k)-3,4k+2)   &\text{ for }0\leq k\leq g-1,\\
   \QQQ(2(g-k)-3,2(g-k)-3,2k+1,2k+1)   &\text{ for }-1\leq k\leq g-2.
 \end{array}
\end{eqnarray*}
each stratum possessing a hyperelliptic and non-hyperelliptic component. There are $4$ sporadic strata in $g=3$ and $4$:
$$\QQQ(-1,9), \hspace{0.5cm}\QQQ(-1,3,6), \hspace{0.5cm}\QQQ(-1,3,3,3), \hspace{0.5cm}\QQQ(12)$$
which each have two connected components. All other stratum are non-empty and connected.
\end{enumerate}
\end{thm}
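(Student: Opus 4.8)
The plan is to prove the theorem in two halves, a lower bound and a matching upper bound on the number of connected components, the lower bound coming from a deformation invariant and the upper bound from explicit surgeries. For the lower bound, the central tool is the \emph{canonical orientation double cover}: to $(C,q)\in\QQQ(\mu)$ one functorially associates a connected double cover $\pi\colon\hat C\to C$, branched exactly at the simple poles and odd-order zeros, on which $\pi^{*}q=\omega^{2}$ for an abelian differential $\omega$ of a signature $\hat\mu$ determined by $\mu$, together with the deck involution $\sigma$. Since $(\hat C,\omega,\sigma)$ varies continuously with $(C,q)$, any discrete invariant of the triple is constant on connected components of $\QQQ(\mu)$. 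The invariant I would use is \emph{hyperellipticity}: whether the configuration of zeros and poles is arranged so that $(C,q)$ lies in the image of the hyperelliptic-cover construction applied to a stratum of quadratic differentials on $\mathbb{P}^{1}$ (equivalently, $\hat C$ carries a hyperelliptic involution commuting with $\sigma$ and permuting the marked points appropriately). One checks that this locus is open and closed in $\QQQ(\mu)$, and that it is a \emph{proper nonempty} sublocus exactly for the three listed families (and the sporadic strata in $g=3,4$), while it is empty or everything otherwise; this gives the factor of two in those cases.

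A crucial negative companion to this is to show there is \emph{no} further invariant. The natural candidate is the parity of the spin structure on $\hat C$, which for abelian strata with all zeros of even order separates components by Kontsevich--Zorich; one must prove that for quadratic differentials this parity is \emph{not} locally constant on $\QQQ(\mu)$ (outside the hyperelliptic locus). I would do this by exhibiting an explicit local ``spin-flipping'' path: a surgery supported near a single high-order zero of $q$ that connects two quadratic differentials in the same stratum whose double covers carry spin structures of opposite parity. Once this is established, the only components that can be separated are the hyperelliptic ones.

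For the upper bound — connectivity of the stratum away from the hyperelliptic locus — I would use the standard surgery toolkit for translation/half-translation surfaces: (i) \emph{breaking up a zero} of order $m$ into zeros of orders summing to $m$, together with its inverse ``collapsing''; (ii) \emph{bubbling a handle}, a connect-sum with a torus that raises the genus while adjusting the zero orders; and (iii) the quadratic-differential analogues of the Kontsevich--Zorich moves, realized either directly or through the double cover. Tracking at each step whether the surgery keeps the configuration inside or outside the hyperelliptic locus, I would reduce an arbitrary $\QQQ(\mu)$ to one of a short list of base strata — the minimal strata $\QQQ(4g-4)$, the small two- and few-zero strata, the genus-$2$ strata, and the $g=3,4$ cases — showing along the way that each surgery merges connected components in a controlled fashion (e.g. all handle-bubblings onto a fixed component of a smaller stratum land in one component of the larger one).

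The base cases are then settled combinatorially. For the minimal strata and the genus-$2$ list I would invoke Rauzy--Veech induction for quadratic differentials, i.e. the dynamics of linear involutions / generalized permutations, where connected components correspond to (extended) Rauzy classes of generalized permutations and the count reduces to a finite enumeration; alternatively, for minimal strata one can pass to the double cover, where $\omega$ lies in a Kontsevich--Zorich-classified abelian stratum, and use a monodromy argument to decide which of its components are fixed versus swapped by $\sigma$ and hence which descend to distinct components below. I expect the main obstacle to be precisely the interface of these last two ingredients: proving the absence of a spin invariant requires producing the explicit spin-flipping path and checking its effect on the double cover, and the low-genus sporadic strata are exactly the cases where dimensions are too small for the surgeries to simplify the combinatorics, so that a hands-on, somewhat irregular case analysis is unavoidable. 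Pinning down the hyperelliptic locus — showing it is open and closed and determining exactly when it is a proper subset of the stratum — is the other technical crux.
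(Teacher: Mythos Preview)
The paper does not prove this theorem at all: its entire proof is the single citation \cite{Lanneau}. The result is quoted as background, so there is no argument in the paper to compare your proposal against. What you have written is, in outline, Lanneau's own strategy --- canonical double cover, hyperelliptic locus as a topological invariant, surgeries (breaking up zeros, bubbling handles) to reduce to base cases, and extended Rauzy classes of generalized permutations to settle those base cases --- so in that sense your plan is faithful to the source the paper cites rather than to the paper itself.

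One point in your outline deserves correction. You write that the hyperelliptic locus is a proper nonempty sublocus ``exactly for the three listed families (and the sporadic strata in $g=3,4$)'', and elsewhere that the spin parity on the double cover is \emph{not} locally constant outside the hyperelliptic locus. For the four sporadic strata $\QQQ(-1,9)$, $\QQQ(-1,3,6)$, $\QQQ(-1,3,3,3)$, $\QQQ(12)$ this is not how the two components arise: they are \emph{not} separated by hyperellipticity. In Lanneau's paper the extra component in each sporadic case is detected purely combinatorially, by finding two generalized permutations in the same stratum that lie in distinct extended Rauzy classes; the geometric invariant distinguishing them (a parity on the double cover) was only identified in later work. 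So your spin-flipping surgery cannot be expected to work uniformly outside the hyperelliptic locus --- it must fail precisely in these low-dimensional sporadic cases, and your proof plan should treat them as genuine exceptions discovered through the Rauzy-class enumeration rather than as further instances of the hyperelliptic dichotomy.
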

\begin{proof}\cite{Lanneau}\end{proof}
 
Although the classification of the connected components for higher strata of $k$-differentials is still in progress~\cite{ChenGendron}, the classification for quadratic differentials with infinite area (a non-simple pole) is complete. In addition to the components that arise as the square of a connected component of meromophic one-forms classified by Boissy~\cite{Boissy}, Chen and Gendron classified the primitive connected components.

\begin{thm}
Suppose $\QQQ(\mu):=\HH^2(\mu)$ is a stratum of quadratic differentials with genus $g\geq 2$ and at least one pole of order two or more. Then there are two primitive connected components of quadratic differentials if 
\begin{itemize}
\item
$\mu=(2n,-l,-l)$ with $l$ odd, 
\item
$\mu=(n,n,-2l)$ with $n$ odd, 
\item
$\mu=(n,n,-l,-l)$ with $n$ and $l$ not both even, 
\item
$\mu=(2n,-2)$ or $(2n,2n,-2)$. 
\end{itemize}
In all other cases there is exactly one primitive connected component.
\end{thm}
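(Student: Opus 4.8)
The plan is to follow the template that Kontsevich--Zorich used for holomorphic abelian strata and that Lanneau~\cite{Lanneau} used for finite--area quadratic strata, adapted to the meromorphic setting with higher order poles: first isolate the discrete invariants that are locally constant on $\QQQ(\mu)$ (this produces the lower bound of one or two primitive components), and then prove a matching connectedness statement by local surgeries and induction (this produces the upper bound). Throughout one works only with the primitive locus, i.e.\ $(C,q)$ with $q$ not a global square; note that when some $m_i$ is odd this is automatic, while in the all--even signatures $(2n,-2)$ and $(2n,2n,-2)$ one must consciously separate the primitive part from the square part, whose components are governed instead by Boissy's classification~\cite{Boissy}.

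\emph{Invariants and the lower bound.} To a primitive $(C,q)$ I would attach its canonical double cover $\pi\colon(\hat C,\hat\omega)\to(C,q)$, branched exactly over the odd--order singularities, with $\pi^*q=\hat\omega^2$ and $\sigma^*\hat\omega=-\hat\omega$ for the deck involution $\sigma$. The signature $\hat\mu$ of $\hat\omega$ is determined by $\mu$ (an odd singularity of order $m$ lifts to a single singularity of order $m+1$ for a zero and $m-1$ for a pole, so an order $1$ pole becomes a regular point; an even singularity of order $m$ splits into two of order $m/2$), so the construction maps $\QQQ(\mu)$ into one fixed abelian stratum $\HH(\hat\mu)$. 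The first thing to verify is that the spin/parity invariant of $\hat\omega$ (when it is defined) is forced by $\mu$, so, in contrast with the abelian case, it does \emph{not} disconnect $\QQQ(\mu)$; one should likewise rule out any rotation--number type invariant surviving beyond what $\mu$ dictates. The one invariant that does survive is hyperellipticity: in each of the four listed families the symmetry of $\mu$ (two zeros of equal order, or two poles of equal order, or one of the two small sporadic shapes) produces a nonempty locus on which $C$ admits an involution exchanging the paired singularities with $q$ pulled back from a quadratic differential on $\PP^1$. One checks this hyperelliptic locus is both closed (it is cut out by the existence of the involution) and open (in these signatures the involution is rigidified by the configuration), hence a union of connected components, and that its complement in $\QQQ(\mu)$ is also nonempty -- giving at least two primitive components.

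\emph{Connectedness and the upper bound.} To show there are no further components I would run a double induction, on the genus and on the number of singularities, using the standard local moves performed $\sigma$--equivariantly on $\hat C$ (equivalently, directly on $(C,q)$): "breaking up a zero" to pass between $\mu$ and a signature with one zero split into two of prescribed orders, "bubbling a handle" to decrease the genus, and colliding or splitting poles to decrease pole orders. Each move is analysed for its effect on hyperellipticity type -- for the paired singularities it preserves the type, and otherwise it merges the hyperelliptic stratum into the non--hyperelliptic one -- and for its effect on primitivity, so that the surgeries never leave the primitive locus. Combining the moves, every primitive differential of a non--exceptional signature is connected to one fixed model, and every primitive differential of one of the four exceptional signatures is connected to one of exactly two models, completing the count modulo the base cases.

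\emph{Base cases and the main obstacle.} The induction bottoms out at small genus and at strata with very few singularities; there one argues by hand, using explicit quadratic differentials on $\PP^1$ as building blocks, the pillowcase (torus--quotient) picture, and a direct analysis of the hyperelliptic locus, and one must separately dispatch the sporadic strata already appearing in Lanneau's list together with the signatures $(2n,-2)$ and $(2n,2n,-2)$, which behave differently precisely because an order $2$ pole is unbranched in the canonical cover and lifts to a pair of simple poles. I expect the main obstacle to be exactly this bookkeeping: tracking the hyperellipticity type and the primitivity through every surgery without gaps, and pinning down that the four exceptional families are the \emph{only} signatures for which the hyperelliptic locus is not eventually absorbed -- this is the step that forces the arithmetic side conditions ("$l$ odd", "$n$ odd", "$n$ and $l$ not both even") and it is where the bulk of the casework lives.
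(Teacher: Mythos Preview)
The paper does not prove this theorem. Its entire proof is the single citation ``\cite{ChenGendron}'': this classification result is quoted from Chen--Gendron (listed in the bibliography as \emph{in preparation}) and used as a black box, exactly as the previous theorem on finite--area strata is quoted from Lanneau. So there is no argument in the paper to compare your proposal against.

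Your outline is a plausible sketch of the standard strategy for such classifications --- identify the hyperelliptic locus via the canonical double cover, show no other discrete invariant survives, and connect everything else by equivariant surgeries and induction --- and it is reasonable to expect that Chen--Gendron's actual proof follows broadly this template. But as written it is a programme rather than a proof: the assertions that spin parity is forced by $\mu$, that the hyperelliptic locus is open in precisely the four listed shapes and no others, and that the surgeries preserve primitivity and hyperellipticity type in the claimed ways are all the substantive content of the theorem, and each requires real work that you have only named. If your goal was to reproduce the paper's treatment of this statement, a one--line citation is what is called for; if your goal was to supply an independent proof, you would need to carry out the casework you flag in your final paragraph, which is a paper--length undertaking.
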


%\begin{thm}
%Suppose $\QQQ(\mu):=\HH^2(\mu)$ is a stratum of quadratic differentials with genus $g\geq 2$ and at least one pole of order two or more. Then the following statements hold:
%\begin{enumerate}
%\item[(1)]
%$\QQQ(\mu)$ has four connected components if $\mu$ is $(4n,-4l),$ $(4n,4n,-4l),$ $(4n,-4l,-4l),$ $(4n,4n,-4l,-4l),$ $(4n,-2,-2),$ or $(4n,4n,-2,-2)$.
%\item[(2)]
%$\QQQ(\mu)$ has three connected components if for $n, l$ not both even $\mu$ is $(2n,-2l)$ with $l>1$, $(2n,2n,-2l)$ with $l>1$, $(2n,-2l,-2l)$ with $l>1$ if $n$ is even, $(2n,2n,-2l,-2l)$ with $l>1$ if $n$ is even.
%\item[(3)]
%$\QQQ(\mu)$ has three connected components if $\mu$ is $(4n_1,\dots,4n_r,-4l_1,\dots,-4l_s)$ with $r\geq 3$ or $s\geq 3$, or $(4n_1,\dots,4n_r,-2,-2)$ with $r\geq 3$.
%\item[(4)]
%$\QQQ(\mu)$ has two connected components if $\mu$ is $(2n,-l,-l)$ with $l$ odd, $(n,n,-2l)$ with $n$ odd, $(n,n,-l,-l)$ with $n$ and $l$ not both even, $(2n,-2)$ or $(2n,2n,-2)$.
%\item[(5)]
%$\QQQ(\mu)$ has two connected components if $\mu$ is $(2n_1,\dots,2n_r,-2l_1,\dots,-2l_s)$ with $r\geq 3$ and/or $s\geq 3$, $n_i$ and $l_i$ are not all even, if $s=1$ then $l_1>1$ and if the polar part is $(-2,-2)$ then $n_i$ are not all even. 
%\item[(6)]
%$\QQQ(\mu)$ is connected if $\mu$ is none of the above.
%\end{enumerate}
%\end{thm}

\begin{proof}\cite{ChenGendron} 
\end{proof}

Observe that as a consequence of the above two theorems we have the following.
\begin{cor}\label{irredQ}
$\QQQ(\mu)$ is connected and hence irreducible for $\mu=( d_1,d_2,d_3,2^{2g-3})$ when $d_i\ne 0$ and some $d_i$ is odd.
\end{cor}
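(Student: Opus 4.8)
The plan is to deduce Corollary~\ref{irredQ} directly from the two classification theorems of Lanneau and of Chen--Gendron stated just above, by checking that the signature $\mu=(d_1,d_2,d_3,2^{2g-3})$ with $d_i\neq 0$, $d_1+d_2+d_3=2$, and some $d_i$ odd never appears on any of the lists of non-connected strata. First I would observe that since $d_1+d_2+d_3=2$ with all $d_i$ nonzero and some $d_i$ odd, the three values $d_1,d_2,d_3$ cannot all be even; in fact up to reordering the only possibilities are $(1,1,0)$ — excluded since $d_i\neq 0$ — so at least one $d_i$ is negative, and the multiset $\{d_1,d_2,d_3\}$ consists of integers summing to $2$ containing an odd (hence a negative) entry, e.g. $(2,1,-1)$, $(3,-1,-1)$, $(1,1,\dots)$ patterns, etc. The key point is that $\mu$ has $2g-3\geq 1$ entries equal to $2$, so $\mu$ has a pole only if some $d_i<0$, and it has a \emph{simple} pole exactly when some $d_i=-1$.

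Next I would split into the two regimes governed by the two theorems. If no $d_i\leq -2$, then $\QQQ(\mu)$ has no pole of order $\geq 2$ — indeed if additionally no $d_i<0$ the differential is holomorphic, but then all $d_i\geq 1$ forces, with $d_1+d_2+d_3=2$, a contradiction with nonvanishing, so some $d_i=-1$ — and we are in the finite-area (at worst simple poles) case of Lanneau's theorem. Here I would run down the two lists: in $g=2$ the non-connected strata are $\QQQ(-1,-1,6)$ and $\QQQ(-1,-1,3,3)$, and for $g\geq 3$ the three infinite families and the four sporadic strata; one checks that $(d_1,d_2,d_3,2^{2g-3})$ with $2g-3$ parts equal to $2$ matches none of these — for $g=2$ the stratum would be $(d_1,d_2,d_3,2)$ with $d_1+d_2+d_3=2$ summing to $4=2(2\cdot2-2)$, which is neither $(-1,-1,6)$ nor $(-1,-1,3,3)$; for $g\geq 3$ the families have at most four entries none equal to $2$ when $g\geq 3$ (their entries are of the form $4k+2$, $2(g-k)-3$, $2k+1$, which equal $2$ only in degenerate small cases that one checks do not produce $2g-3\geq 3$ copies), and the sporadic strata have too few parts. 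If instead some $d_i\leq -2$, then $\mu$ has a pole of order $\geq 2$ and we invoke Chen--Gendron: the primitive-non-connected list requires $\mu$ to have exactly the shapes $(2n,-l,-l)$, $(n,n,-2l)$, $(n,n,-l,-l)$, $(2n,-2)$, $(2n,2n,-2)$, all with at most two "large" entries and hence incompatible with having $2g-3\geq 1$ entries equal to $2$ \emph{plus} three further entries $d_1,d_2,d_3$ — again a short case check rules each out — and one must also argue the stratum is not the square of a one-form, i.e. that the non-primitive (Boissy) components do not create reducibility: since some $d_i$ is odd, $\mu$ is not $2\cdot(\text{one-form signature})$, so there is no square component and the unique primitive component is all of $\QQQ(\mu)$.

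Finally I would assemble: in every case $\QQQ(\mu)$ has exactly one connected component, hence is connected, and being a connected complex orbifold it is irreducible, giving the corollary. The main obstacle I anticipate is purely bookkeeping: making the ``matches none of these lists'' step airtight, particularly handling the small-genus and boundary cases $g=2,3$ where entries like $4k+2$ or $2(g-k)-3$ can accidentally equal $2$ and the number of parts is small, so that one must verify by hand that no admissible $(d_1,d_2,d_3,2^{2g-3})$ coincides with a sporadic or low-index family member; and on the Chen--Gendron side, being careful that the exclusion of square-of-one-form components really does follow from the parity hypothesis ``some $d_i$ odd'' (a $k$-differential of signature $\mu$ is a $k$th power of a holomorphic one-form only if $k\mid m_i$ for all $i$, which fails here as soon as one $d_i$ is odd, since $k=2$). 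Neither step is deep; the content is entirely in organizing the finite check.
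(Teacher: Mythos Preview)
Your proposal is correct and is precisely the approach the paper takes: the paper's ``proof'' is simply the sentence that the corollary is a consequence of the two classification theorems of Lanneau and Chen--Gendron quoted immediately above it, and your plan supplies exactly the routine case-check (comparing the number and parity of entries in $\mu=(d_1,d_2,d_3,2^{2g-3})$ against each listed non-connected stratum, and using the odd $d_i$ to exclude non-primitive components) needed to justify that sentence. There is nothing to add beyond tidying the bookkeeping you already flagged.
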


%%%% twisted k-differential
\subsection{Degeneration of $k$-differentials}
A stable pointed curve $[C,p_1,...,p_n]\in\Mgnb$ is contained in \emph{the stratum of twisted $k$-canonical divisors  with signature $\mu$} defined by Farkas and Pandharipande~\cite{FP} in the $k=1$ case, denoted $\widetilde{\PPP}^k(\mu)$, if there exists a collection of (possibly meromorphic) non-zero $k$-differentials $\{\eta_j\}$ on the irreducible components $C_j$ of $C$ known as a \emph{twisted $k$-differential} with $(\eta_j)= D_j\sim K_{C_j}^k$ such that
\begin{enumerate}
\item
The support of $D_j$ contains the set of marked points and the nodes lying in $C_j$, moreover if $p_i\in C_j$ then $\ord_{p_i}(D_j)=m_i$.
\item
If $q$ is a node of $C$ and $q\in C_i\cap C_j$ then $\ord_q(D_i)+\ord_q(D_j)=-2k$.
\item
If $q$ is a node of $C$ and $q\in C_i\cap C_j$ such that $\ord_q(D_i)=\ord_q(D_j)=-k$ then for any $q'\in C_i\cap C_j$, we have $\ord_{q'}(D_i)=\ord_{q'}(D_j)=-k$. We write $C_i\sim C_j$.
\item
If $q$ is a node of $C$ and $q\in C_i\cap C_j$ such that $\ord_q(D_i)>\ord_q(D_j)$ then for any $q'\in C_i\cap C_j$ we have $\ord_{q'}(D_i)>\ord_{q'}(D_j)$. We write $C_i\succ C_j$.
\item
There does not exist a directed loop $C_1\succeq C_2\succeq...\succeq C_k\succeq C_1$ unless all $\succeq$ are $\sim$.
\end{enumerate}  
%In particular, the last condition implies that if $q$ is a node of $C$ and $q\in C_i\cap C_j$ such that $\ord_q(D_i)=\ord_q(D_j)=-k$ then for any $q'\in C_i\cap C_j$ we have $\ord_{q'}(D_i)=\ord_{q'}(D_j)=-k$. 
In addition to $\overline{\PPP}^k(\mu)$ known as \emph{the main component},  $\widetilde{\PPP}^k(\mu)$ contains extra components completely contained in the boundary. Bainbridge, Chen, Gendron, Grushevsky and M\"oller provided the condition that a twisted $k$-canonical divisor lies in the main component. First consider the case $k=1$~\cite{BCGGM}. Let $\Gamma$ be the dual graph of $C$. A twisted canonical divisor of type $\mu$ is the limit of canonical divisors on smooth curves if there exists a twisted differential $\{\eta_j\}$ on $C$ such that
\begin{enumerate}
\item
If $q$ is a node of $C$ and $q\in C_i\cap C_j$ such that $\ord_q(D_i)=\ord_q(D_j)=-1$ then $\res_q(\eta_i)+\res_q(\eta_j)=0$; and
\item
there exists a full order on the dual graph $\Gamma$, written as a level graph $\overline{\Gamma}$, agreeing with the order of $\sim$ and $\succ$, such that for any level $L$ and any connected component $Y$ of  $\overline{\Gamma}_{>L}$ that does not contain a prescribed pole we have
\begin{equation*}
\sum_{\begin{array}{cc}\text{level}(q)=L, \\q\in C_i\subset Y\end{array}}\res_{q}(\eta_i)=0
\end{equation*} 
\end{enumerate}
Condition $(b)$ is known as the \emph{global residue condition}. For $k\geq 2$, twisted $k$-differentials can be lifted by a covering construction and the conditions for membership in the main component reduced to the conditions in the case of $k=1$ on the covering curve~\cite{BCGGM2}.  Bainbridge, Chen, Gendron, Grushevsky and M\"oller show via this strategy that a twisted $k$-canonical divisor of type $\mu$ is the limit of $k$-canonical divisors on smooth curves if there exists a twisted $k$-differential $\{\eta_j\}$ on $C$ such that
\begin{enumerate}
\item
If $q$ is a node of $C$ and $q\in C_i\cap C_j$ such that $\ord_q(D_i)=\ord_q(D_j)=-k$ then $\res^k_q(\eta_i)=(-1)^k\res^k_q(\eta_j)$; and
\item
there exists a full order on the dual graph $\Gamma$, written as a level graph $\overline{\Gamma}$, agreeing with the order of $\sim$ and $\succ$, such that for any level $L$ and any connected component $Y$ of  $\overline{\Gamma}_{>L}$ one of the following cases holds
\begin{enumerate}
\item 
$Y$ contains a marked pole.
\item
$Y$ contains a vertex $v$ such that $\eta_v$ is not a $k$-th power of a (possibly meromorphic)
abelian differential.
\item
("Horizontal criss-cross in $Y$ .") For every vertex $v$ of $Y$ the $k$-differential $\eta_v$ is the $k$-th
power of an abelian differential $\omega_v$. Moreover, for every choice of a collection of $k$-th
roots of unity $\{\zeta_v | v\in Y\}$ there exists a horizontal edge $e$ in $Y$ where the differentials
$\{\zeta_v\omega_v\}_{v\in Y}$ do not satisfy the matching residue condition.
\item
("Vertical criss-cross in $Y$ .") For every vertex $v$ of $Y$ the $k$-differential $\eta_v$ is the $k$-th
power of an abelian differential $\omega_v$. Moreover, there exists a level $K > L$ and a collection
of $k$-th roots of unity $\{\zeta_e | e\in E\}$ indexed by the set $E$ of non-horizontal edges $e$ of $Y$
whose lower end lies in $Y_{=K}$, such that the following two conditions hold. First, there
exists a directed simple loop $\gamma$ in the dual graph of $Y_{\geq K}$ such that
\begin{equation*}
\prod_{e\in \gamma\cap E}\zeta_{e}^{\pm 1}\ne 1,
\end{equation*}
where the sign of the exponent is $\pm 1$ according to whether $\gamma$ passes through $e$ in upward or downward direction. Second, for every connected component $T$ of $Y_{>K}$ the equation
\begin{equation*}
\sum_{e\in E_T} \zeta_e\res_{q_e^-}(\omega_{v^-(e)})=0
\end{equation*}
holds, where $E_T$ is the subset of edges in $E$ such that their top vertices lie in $T$.
\item
("$Y$ imposes a residue condition.") The $k$-residues at the edges $e_1,\dots, e_N$ joining $Y$ to
$\Gamma_{=L}$ satisfy the equation
\begin{equation*}
P_{N,k}\left(\res^k_{q_{e_1}^-}(\eta_{v^-_{e_1}}),\dots,\res^k_{q_{e_N}^-}(\eta_{v^-_{e_N}})\right)=0
\end{equation*}
for
\begin{equation*}
P_{n,k}(R_1,\dots,R_n):=\prod_{\{(r_1,\dots,r_n)|r_i^k=R_i  \}}\sum_{i=1}^n r_i,
\end{equation*}
where the product is taken over all $n$-tuples of complex numbers $\{r_1,\dots,r_n\}$ such that $r_i^k=R_i$ for all $i$. As $P_{n,k}$ is symmetric with respect to the $k$th roots of $R_i$ it is a polynomial in $R_i$.
\end{enumerate}
\end{enumerate}
Condition $(e)$ is known as the \emph{global $k$-residue condition}.

%%%%%%%%%%%%%%%%%%%%%%%%%%%%%%%%%
%%% Example 1 in g=3 %%%%
\begin{ex}\label{Ex1} Fix $k\geq 2$ and $(d_1,d_2,d_3)\in \ZZ^3$ such that $d_1+d_2+d_3=k$. Consider $k$-differentials $\omega_C$ on a genus $g=3$ curve $C$ and $\omega_Y$ on a rational curve $Y$ such that 
\begin{eqnarray*}
&&(\omega_C)=ky'+kp_4+kp_5+kp_6 \sim kK_X\\
&&(\omega_Y)=-3ky+d_1p_1+d_2p_2+d_3p_3\sim kK_Y
\end{eqnarray*}
and further assume that $\omega_C$ is the $k$th power of a holomorphic differential. Then the twisted $k$-differential $\{\omega_C,\omega_Y\}$ gives
\begin{equation*}
[C\cup_{y'=y}Y,p_1,\dots,p_6]\in\widetilde{\PPP}^k(d_1,d_2,d_3,k^3)\subset \overline{\mathcal{M}}_{3,6}
\end{equation*} 
pictured in Figure 1. But not all such stable pointed curves belong to the main component $\overline{\PPP}^k(d_1,d_2,d_3,k^3)$. The global $k$-residue condition gives the requirements that such a stable pointed curve is indeed smoothable. In this case any the twisted $k$-differential associated to $[C\cup_{y'=y}Y,p_1,\dots,p_6]$ implies a unique level graph pictured in Figure 1. Hence the global $k$-residue condition is simply
\begin{equation*}
\res^k_{y}(\omega_Y)=0.
\end{equation*}

\begin{figure}[htbp]
\begin{center}
\begin{overpic}[width=1\textwidth]{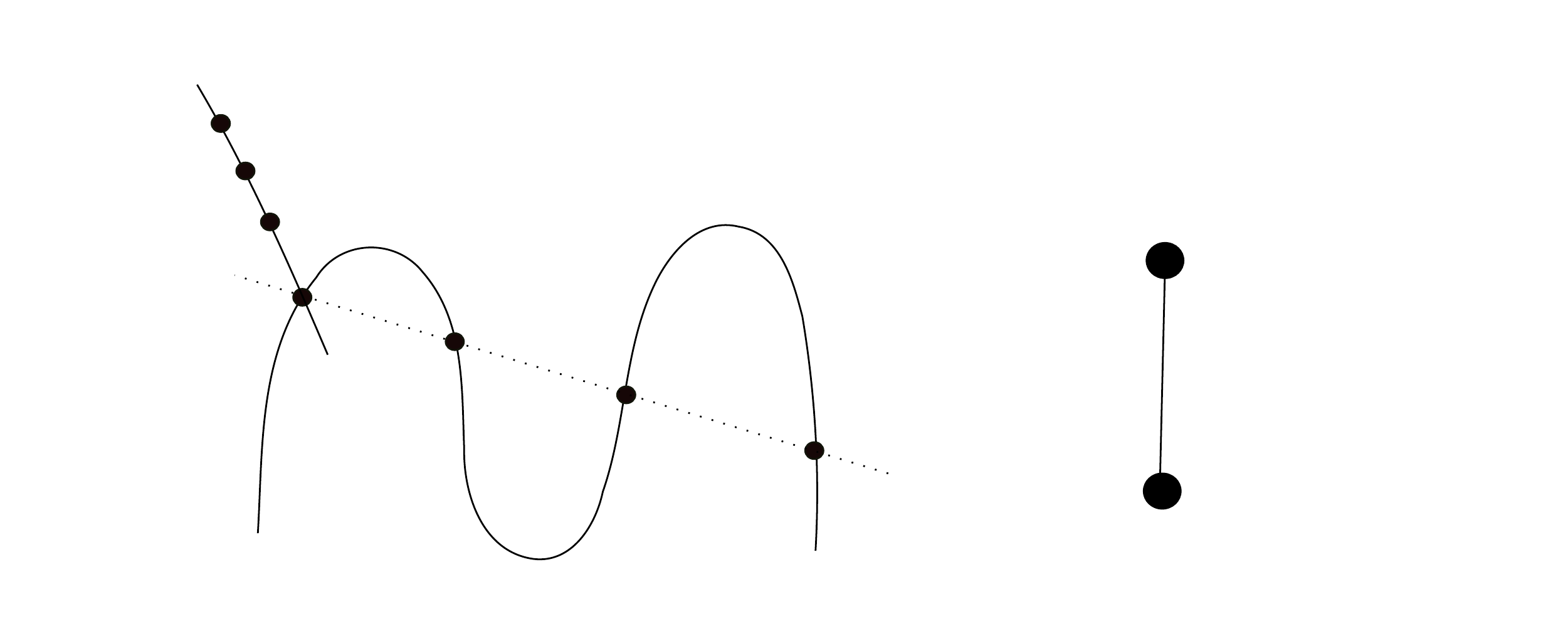}
\put (13,8){$C$}
\put (21,30){$Y\cong \PP^1$}
\put (15.5,33){$p_1$}
\put (17,30){$p_2$}
\put (18.5,26.5){$p_3$}
\put (16,20.5){$y$}
\put (21,21.5){$y'$}
\put (26,17){$p_4$}
\put (36.5,14){$p_5$}
\put (49,10){$p_6$}
%\put (41,27){$X\cong \PP^1$}
%\put (39,32){$p_7$}
%\put (41.5,35){$p_8$}
%\put (37,27){$x$}
%\put (37,21){$x'$}
\put (24,0){$y'+p_4+p_5+p_6\sim K_C$}
\put (73.75,27){$C$}
\put (73.5,5){$Y$}
\put (68,0){Level graph $\overline{\Gamma}$}
\put (13,-7){\footnotesize{Figure 1: A twisted $k$-differential in $\widetilde{\PPP}^k(d_1,d_2,d_3,k^3)\subset \overline{\mathcal{M}}_{3,6}$  } }
\end{overpic}
\vspace{0.6cm}
\end{center}
\end{figure}
\end{ex}

%%%%%%%%%%%%%%%%%%%%%%%%%%%%%%%%%
%%% Example 2 in g=3 %%%%
\begin{ex}\label{Ex2} Consider the unique (up to scaling) $k$-differential $\omega_X$ on rational curve $X$ with
\begin{equation*}
(\omega_X)=-2kx+p_7-p_8\sim kK_X.
\end{equation*} 
Then for any $x'\in C$ with $x'\ne y',p_4,p_5,p_6$, the twisted $k$-differential $\{\omega_C,\omega_X,\omega_Y\}$ gives
\begin{equation*}
[C\cup_{y'=y}Y\cup_{x'=x}X,p_1,\dots,p_8]\in\widetilde{\PPP}^k(d_1,d_2,d_3,k^3,1,-1)\subset \overline{\mathcal{M}}_{3,8}
\end{equation*} 
pictured in Figure 2. Again, not all such stable pointed curves belong to the main component $\overline{\PPP}^k(d_1,d_2,d_3,k^3)$. In this case the dual graph of the curve contains $3$ vertices and any possible twisted differential implies two conditions $C\succ X$ and $C\succ Y$ on the level dual graph $\overline{\Gamma}$. There are three level graphs that satisfy these conditions. However, if $X$ and $Y$ appear on different levels of the graph the global $k$-residue condition will imply that $\res^k_{Z}(\omega_Z)=0$ for the component $Z=X$ or $Y$ appearing in the higher level. But observe $\res^k_{x}(\omega_X)\ne 0$. Hence only level graphs with $X$ on the lowest level are possible. In the case that $Y$ appears on a higher level than $X$ the global $k$-residue gives the requirement
\begin{equation*} 
\res^k_y(\omega_Y)=0.
\end{equation*}
If $X$ and $Y$ appear on the same level in $\overline{\Gamma}$ the global $k$-residue condition gives
\begin{equation*}
\res^k_x(\omega_X)=(-1)^k\res^k_y(\omega_Y).
\end{equation*}
These two possible level graphs are pictured in Figure 2.

\begin{figure}[htbp]
\begin{center}
\begin{overpic}[width=1\textwidth]{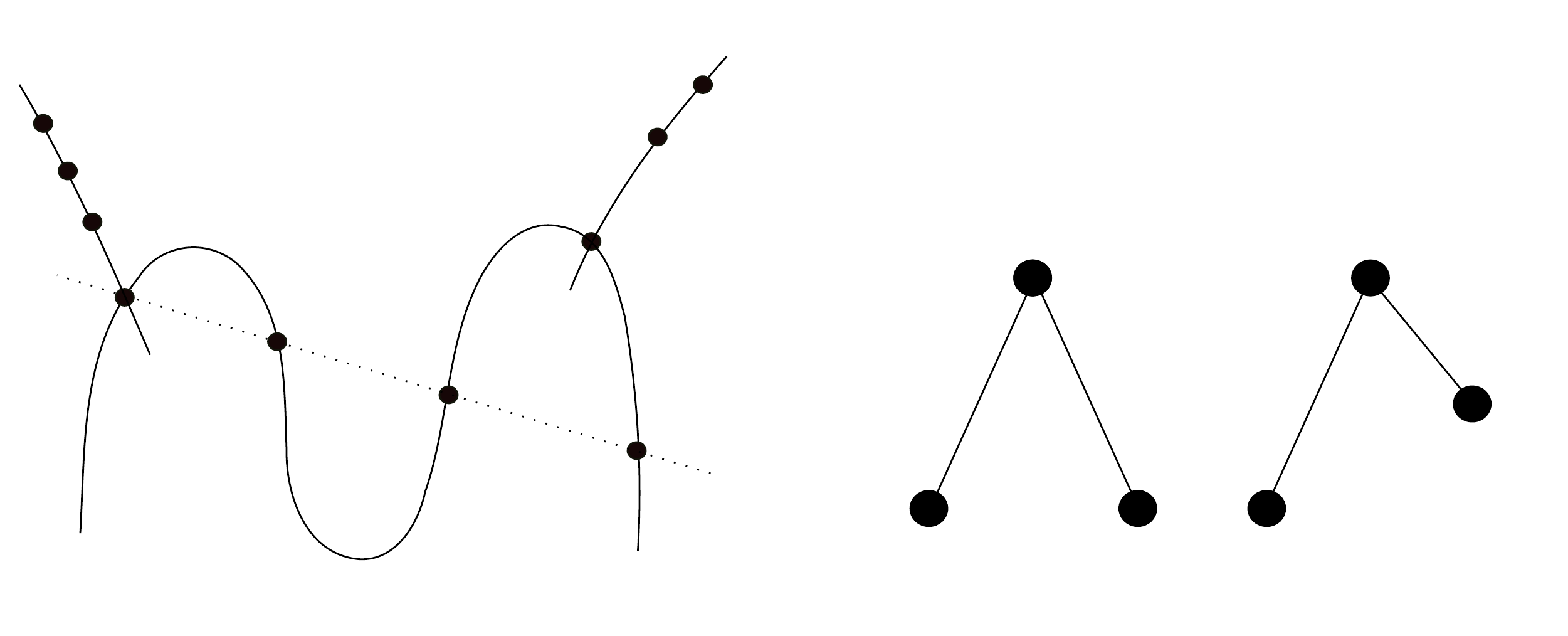}
\put (2,8){$C$}
\put (10,30){$Y\cong \PP^1$}
\put (4,33){$p_1$}
\put (5.5,30){$p_2$}
\put (7,26.5){$p_3$}
\put (5,20.5){$y$}
\put (10,21.5){$y'$}
\put (15,17){$p_4$}
\put (25.5,14){$p_5$}
\put (38,10){$p_6$}
\put (41,27){$X\cong \PP^1$}
\put (39,32){$p_7$}
\put (41.5,35){$p_8$}
\put (37,27){$x$}
\put (37,21){$x'$}
\put (13,0){$y'+p_4+p_5+p_6\sim K_C$}
\put (65,25){$C$}
\put (71.5,4){$Y$}
\put (58,4){$X$}
\put (86.5,25){$C$}
\put (93,10.5){$Y$}
\put (80,4){$X$}
\put (63,0){Possible level graphs $\overline{\Gamma}$}
\put (13,-7){\footnotesize{Figure 2: A twisted $k$-differential in $\widetilde{\PPP}^k(d_1,d_2,d_3,k^3,1,-1)\subset \overline{\mathcal{M}}_{3,8}$   } }
\end{overpic}
\vspace{0.6cm}
\end{center}
\end{figure}
\end{ex}

%%%%%%% Divisor theory on Mgn, and maps between moduli spaces, divisor notation, formula for 1-forms holomorphic - Logan, 
\subsection{Divisor theory on $\Mgnb$}
$\Pic_\QQ(\Mgn)$ is generated by $\lambda$, the first Chern class of the Hodge bundle, $\psi_i$ the first Chern class of the cotangent bundle on $\Mgnb$ associated with the $i$th marked point for $i=1,\dots,n$ and the irreducible components of the boundary $\Delta_0$ the locus of curves in $\Mgnb$ with a nonseparating node and $\Delta_{i:S}$ for $0\leq i\leq g$, $S\subset \{1,\dots,n\}$ the locus of curves with a separating node that separates the curve such that one of the components has genus $i$ and contains precisely the marked points in $S$. Let $\delta_0$ denote the class of $\Delta_0$ and $\delta_{i:S}$ the class of $\Delta_{i:S}$. Hence $\delta_{i:S}=\delta_{g-i:S^c}$ and observe we require $|S|\geq 2$ for $i=0$ and $|S|\leq n-2$ for $i=g$.

For $g\geq 3$, these divisors freely generate $\Pic_\QQ(\Mgnb)$, but for $g=2$, the classes $\lambda$, $\delta_0$ and $\delta_1$ generate $\Pic_\QQ(\overline{\mathcal{M}}_2)$ with the relation
\begin{equation*}
\lambda=\frac{1}{10}\delta_0+\frac{1}{5}\delta_1.
\end{equation*}
Pulling back this relation under the forgetful morphism forgetting all marked points gives the only relation on these generators in  $\Pic_\QQ(\overline{\mathcal{M}}_{2,n})$.

%%%%%%%%%%%%%%%%%%%%%%%%%
\subsection{Maps between moduli spaces}
For a fixed general $[X,q,q_1]\in \mathcal{M}_{h,2}$ consider the map 
\begin{eqnarray*}
\begin{array}{cccc}
\pi_h:&\overline{\mathcal{M}}_{g,n}&\rightarrow& \overline{\mathcal{M}}_{g+h,n}\\
&[C,p_1,\dots,p_n]&\mapsto&[C\bigcup_{p_1=q}X,q_1,p_2,\dots,p_n].
\end{array}
\end{eqnarray*} 
that glues points $p_1$ and $q$ to form a node. The pullback of the generators of $\Pic_\QQ(\overline{\mathcal{M}}_{g+h,n})$ are presented in~\cite{AC}
\begin{equation*}
\pi_h^*\lambda=\lambda, \hspace{0.5cm}\pi_h^*\delta_0=\delta_0,\hspace{0.5cm}\pi_h^*\delta_{h:\{1\}}=-\psi_1
\end{equation*}
and
\begin{equation*}
\pi_h^*\psi_i=\begin{cases}
0 &\text{ for i=1}\\
\psi_i &\text{ for $i=2,\dots,n$}
\end{cases}
\end{equation*}
and for $1\in S$
\begin{equation*}
\pi_h^*\delta_{i:S}=\begin{cases}
0 &\text{ for $i<h$}\\
-\psi_1 &\text{ for $i=h,S=\{1\}$}\\
\delta_{i-h:S}&\text{ otherwise.}\\
\end{cases}
\end{equation*}

%%%%%%%%%%%%%%%%%%%%%%%%%
\subsection{Divisors from $k$-strata of differentials}
The following divisor notation is used in this paper. 

\begin{defn}
For $|\mu|=g-2+n$ if $k=1$ and all entries of $\mu$ are $\geq 0$ and $|\mu|=g-1+n$ otherwise, if we express $\mu$ in exponential notation as
\begin{equation*}
\mu=(d_1,\dots,d_n,e_1^{\alpha_1},\dots,e_r^{\alpha_r}),
\end{equation*}
where $e_i\ne e_j$ for $i\ne j$.
Then $D^{n,k}_\mu$ for $n\geq 1$ is the divisor in $\Mgnb$ defined by:
\begin{equation*}
D^{n,k}_\mu:=\frac{1}{\alpha_1!\cdots\alpha_r!}\varphi_*\overline{\PPP}^k(\mu),
\end{equation*}
where $\varphi$ forgets the last $r=g-2$ points for $k=1$ and all $d_i,e_i\geq 0$ or the last $r=g-1$ marked points otherwise. 
\end{defn}
In the case that $k=1$ we will drop this index, that is, $D^{n,1}_\mu=D^{n}_\mu$. In the case of quadratic differentials, that is, $k=2$ we use the notation $D^{n,2}_\mu=Q^n_\mu$. For example, let $\underline{d}=(d_1,\dots,d_n)$ be an $n$-tuple of integers satisfying $\sum d_j=g$ with $d_j\geq 0$. Logan \cite{Logan} computed the class of the pointed Brill-Noether divisors in $\Mgnb$, which from our perspective are the divisors
\begin{equation}\label{logan}
D^n_{\underline{d},1^{g-2}}=-\lambda+\sum_{j=1}^n\begin{pmatrix}d_j+1\\2  \end{pmatrix}\psi_j
-0\cdot\delta_0-\sum_{\tiny{\begin{array}{cc}i,S\end{array}}}\begin{pmatrix}|d_S-i|+1\\2  \end{pmatrix}\delta_{i:S}
\end{equation}
in $\Pic_\QQ(\Mgnb)$, where $d_S:=\sum_{j\in S}d_j$.

%%%%%%%%%%%%%%%%%%%%%%%%%%%%%
%%%%%%%%%%%%%%%%%%%%%%%%%%%%%
\section{Divisor class computation}
In this section we compute the divisor classes defined above for the strata of quadratic differentials. Our strategy is to first compute the class of $Q^{2g-2}_{1^{2g-2},2^{g-1}}$ (which we will denote $Q_g$) in $\Pic_\QQ(\overline{\mathcal{M}}_{g,2g-2})$ via test curves. Utilising the symmetries in this class due to the symmetries in the signature we compute the intersection number of a number of test curves in $\overline{\mathcal{M}}_{g,2g-2}$ with $Q_g$ and the standard generators of $\Pic_\QQ(\overline{\mathcal{M}}_{g,2g-2})$ providing the class of $Q_g$. 

We then use gluing maps between moduli spaces of curves and other known classes from the strata of differentials to compute a formula for the general divisor $Q^{n}_{\underline{d},2^{g-1}}$ in $\Pic_\QQ(\overline{\mathcal{M}}_{g,2g-2})$ where $\underline{d}=(d_1,\dots,d_n)$ and $\sum d_i=2g-2$.

Consider the following test curves:
\begin{itemize}
\item
$A_{i:s}$: For $i=0,\dots,g$, $s=1,\dots,2g-2$ and $s\leq 2g-5$ if $i=g$ and $s\leq 2g-3$ if $i=g-1$. Fix general smooth pointed curves $[X,p_1,\dots,p_{s},x]\in\mathcal{M}_{i,s+1}$ and $[Y,p_{s+1},\dots,p_{2g-2}]\in \mathcal{M}_{g-i,2g-2-s}$. Form the curve by attaching $x$ to a point $y$ that varies in $Y$. See Figure 3.
\item
$B_{i:s}$: For $i=0,\dots,g$, $s=0,\dots,2g-3$ and $s\geq 2$ if $i=0$. Fix general smooth pointed curves $[X,p_1,\dots,p_{s},x]\in\mathcal{M}_{i,s+1}$ and $[Y,p_{s+2},\dots,p_{2g-2},y]\in \mathcal{M}_{g-i,2g-2-s}$. Form the curve by attaching $x$ to $y$ and allowing $p_{s+1}$ to vary freely in $X$. See Figure 3.
\item
$C_{i:s}$: For $i=0,\dots,g$, $s=0,\dots,2g-4$ and $s\geq 1$ if $i=0$ and $s\leq 2g-5$ for $i=g$. Fix general smooth pointed curves $[X,p_1,\dots,p_{s},x]\in\mathcal{M}_{i,s+1}$, $[Y,p_{s+3},\dots,p_{2g-2},y]\in \mathcal{M}_{g-i,2g-4-s}$ and $[Z,z_1,z_2,p_{s+2}]\in\mathcal{M}_{0,3}$. Form the curve by attaching $x$ to $z_1$, $y$ to $z_2$ and allowing $p_{s+1}$ to vary freely in $Z$. See Figure 3.
\end{itemize}
For some choices of parameters these curves coincide. Observe, for example, that $C_{0:1}=B_{0:2}$ and $A_{0:1}=\sigma B_{g,2g-3}$ where $\sigma$ permutes the first and $(2g-2)$th point.
%\begin{figure}[htbp]
\begin{center}
\vspace{0.5cm}
\begin{overpic}[width=1\textwidth]{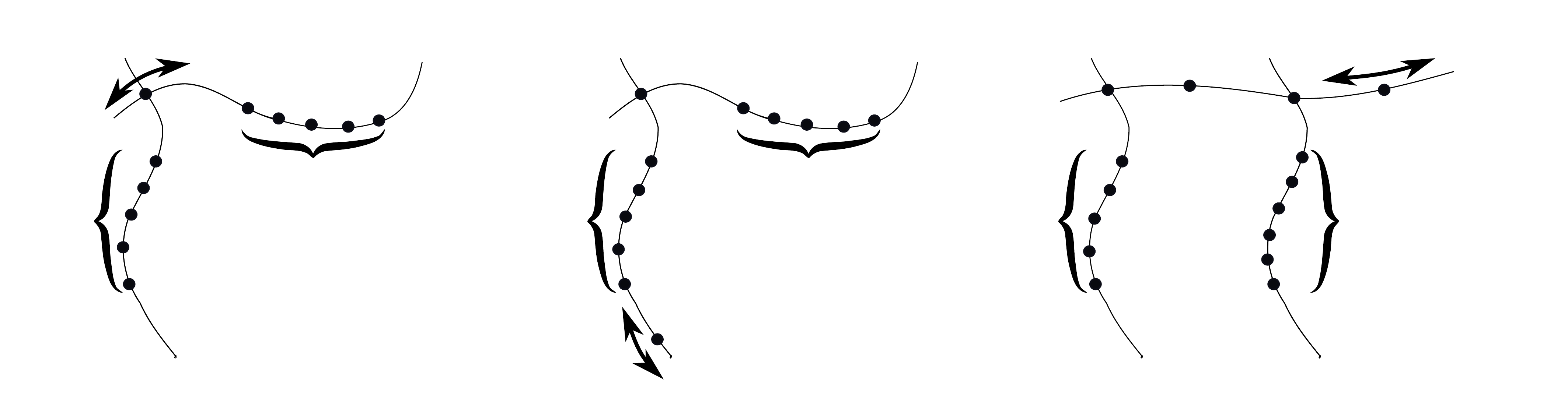}
\put (15, 0){$A_{i:s}$}
\put (47, 0){$B_{i:s}$}
\put (79, 0){$C_{i:s}$}
\footnotesize{
%\put (15, 0){$A_{i:s}$}
\put (-2.5, 12){$p_1,\dots,p_s$}
\put (-2, 9){$g(X)=i$}
\put (13, 15){{$p_{s+1},\dots,p_{2g-2}$}}
\put (14, 22){$g(Y)=g-i$}

%\put (47, 0){$B_{i:s}$}
\put (29.5, 12){$p_1,\dots,p_s$}
\put (30, 9){$g(X)=i$}
\put (45, 15){$p_{s+2},\dots,p_{2g-2}$}
\put (46, 22){$g(Y)=g-i$}
\put (43, 5){$p_{s+1}$}

%\put (79, 0){$C_{i:s}$}
\put (59.5, 12){$p_1,\dots,p_s$}
\put (60, 9){$g(X)=i$}
\put (86, 12){$p_{s+3},\dots,p_{2g-2}$}
\put (87, 9){$g(Y)=g-i$}
\put (74, 22.5){$p_{s+2}$}
\put (88, 18.5){$p_{s+1}$}
\put (92.5, 20){$g(Z)=0$}

}

\put (25,-6){\footnotesize{Figure 3: Test curves $A_{i:s}$, $B_{i:s}$ and $C_{i:s}$ in $\overline{\mathcal{M}}_{g,2g-2}$.}}
\end{overpic}
\end{center}
\vspace{01cm}
%\end{figure}

\begin{lem}\label{intbasis}
The following intersection numbers hold,
\begin{eqnarray*}\begin{array}{lcccccc}
A_{i:s}\cdot \delta_{i:\{1,\cdots,s\}}=-(4g-2i-4-s),\hspace{0.5cm} \\
A_{i:s}\cdot \delta_{i:\{1,\dots,s,j\}}=A_{i:s}\cdot \psi_j=1\text{ for $j=s+1,\dots ,2g-2,$}\\
\\
B_{i:s}\cdot \delta_{i:\{1,\dots,s\}}=1,\hspace{0.5cm} \\
B_{i:s}\cdot \delta_{i:\{1,\dots,s+1\}}=-1\hspace{0.5cm}\\
B_{i:s}\cdot \psi_{s+1}=2i-1+s,\\ 
B_{i:s}\cdot \psi_j= B_{i:s}\cdot \delta_{0:\{j,s+1\}}=1\text{ for $j=1,\dots ,s,$}\\
\\
C_{i:s}\cdot \delta_{i:\{1,\dots,s\}}=C_{i:s}\cdot \delta_{g-i:\{s+3,\dots,2g-2\}}=-1\\
C_{i:s}\cdot \psi_{s+1}=C_{i:s}\cdot \psi_{s+2}=C_{i:s}\cdot \delta_{0:\{s+1,s+2\}}=C_{i:s}\cdot \delta_{i:\{1,\dots,s+1\}}=C_{i:s}\cdot \delta_{g-i:\{s+3,\dots,2g-2,s+1\}}=1.
\end{array}
\end{eqnarray*}
All other intersections are zero.
\end{lem}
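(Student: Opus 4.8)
The plan is to compute each intersection number directly by restricting the relevant divisor class to the one-parameter family and counting the induced base points of the family, as is standard for test curves in $\overline{\mathcal{M}}_{g,n}$. For each of the three families I would first identify the total space of the family as a fibered surface over the base $\PP^1$ (either a varying node sweeping out a curve, or a varying marked point), and then describe the finitely many singular fibers. I would carry out the three families in the order $A_{i:s}$, then $B_{i:s}$, then $C_{i:s}$, since each is slightly more complicated than the last.

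For $A_{i:s}$: here $x\in X$ is fixed and $y$ varies over $Y$, so the family is $X\times Y\to Y$ with $Y$ glued along $x\times Y$ and $X\times\{y\}$ at $x=y$; the marked points are constant sections. I would observe that $\psi_j$ for $j\geq s+1$ restricts, on the constant section $p_j\subset X\times Y$... no, $p_j\in Y$, so the section is $X$-constant, and the normal bundle computation gives $A_{i:s}\cdot\psi_j=1$; for $j\leq s$ the section $p_j\times Y$ lies in the fixed fiber direction so $\psi_j$ has degree $0$. The class $\delta_{i:\{1,\dots,s,j\}}$ meets the family only at the single fiber where $y=p_j$, giving $1$; the class $\delta_{i:\{1,\dots,s\}}$ — the divisor parametrizing the node itself — restricts to the self-intersection of the attaching section $x\times Y$, which by adjunction/the standard formula equals $-\deg(\text{normal bundle})$; tracking genera and marked points gives $-(4g-2i-4-s)$. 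I would double-check this coefficient against the fact that the total degree of $\psi_{p_x}$ on the node must balance. All remaining generators $\lambda$, $\delta_0$, $\psi_j$ (other cases), and $\delta_{i':S}$ for other $(i',S)$ are pulled back from fixed data or never specialize, hence intersect in $0$.

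For $B_{i:s}$: $x\in X$ and $y\in Y$ are glued at a fixed node, and $p_{s+1}$ varies over $X$; the family is $X\times X$ diagonal-type over $X$, with $Y$ a constant factor. Now $\psi_{s+1}$ restricts to $\psi$ of the universal curve over $X$ which has degree $2g(X)-2+(\text{number of special points on }X)=2i-2+(s+1)+1=2i+s$... I would recompute: the special points on $X$ are $p_1,\dots,p_s$ and the node $x$, that is $s+1$ points, giving $\psi_{s+1}\cdot B_{i:s}=2i-2+(s+1)=2i+s-1$, matching the claim. The boundary class $\delta_{0:\{j,s+1\}}$ meets the family where $p_{s+1}$ collides with $p_j$, contributing $1$, and this simultaneously forces $B_{i:s}\cdot\psi_j=1$ on those bubble fibers; $\delta_{i:\{1,\dots,s+1\}}$ meets where $p_{s+1}$ runs into the node $x$ (contributing $-1$ after accounting for the automorphism/orientation of the bubble), while $\delta_{i:\{1,\dots,s\}}$ is the constant node, contributing $+1$ as a fixed section with the appropriate self-intersection. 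Everything else vanishes.

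For $C_{i:s}$: the curve has a fixed $\PP^1$-bridge $Z$ carrying $p_{s+2}$ and the two nodes $z_1,z_2$ to $X$ and $Y$, and $p_{s+1}$ varies over $Z$; the family is essentially $Z\times Z\cong\PP^1\times\PP^1$ over the base. Since $Z$ is rational with the three fixed special points $z_1,z_2,p_{s+2}$, colliding $p_{s+1}$ with any one of them produces a further rational bubble, giving $C_{i:s}\cdot\delta_{0:\{s+1,s+2\}}=1$ (and $C_{i:s}\cdot\psi_{s+2}=1$ on that fiber), $C_{i:s}\cdot\delta_{i:\{1,\dots,s+1\}}=1$ when $p_{s+1}\to z_1$, and $C_{i:s}\cdot\delta_{g-i:\{s+3,\dots,2g-2,s+1\}}=1$ when $p_{s+1}\to z_2$; meanwhile $\psi_{s+1}$ on a family over a rational base with three fixed points has degree $2\cdot 0-2+3=1$. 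The two fixed nodes contribute $\delta_{i:\{1,\dots,s\}}\cdot C_{i:s}=\delta_{g-i:\{s+3,\dots,2g-2\}}\cdot C_{i:s}=-1$ as fixed sections with normal bundle $\OO_{\PP^1}(-1)$ coming from the node-smoothing direction. I expect the main obstacle throughout to be getting the signs and the exact self-intersection coefficients right for the ``fixed node'' boundary classes (the $-(4g-2i-4-s)$ in particular), which requires carefully applying the formula that the restriction of $\delta_{\text{node}}$ to a family where that node is a constant section equals minus the sum of the $\psi$-classes at the two branches; I would verify it by an independent consistency check, e.g.\ intersecting with the pullback of a known divisor class such as the hyperelliptic or Brill–Noether class, or by specializing to a case with small $g$ where the surface and its singular fibers can be written down explicitly.
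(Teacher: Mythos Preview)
Your approach is the same as the paper's: the paper calls this ``a simple exercise in intersection theory'', cites Harris--Morrison, and works out only $A_{i:s}$ explicitly by realising the total space as the blow-up of $Y\times Y$ at the diagonal points $(p_j,p_j)$ and computing $\widetilde{\Delta}^2=-(4g-2i-4-s)$. Your outline is in fact more detailed than what the paper provides.

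There is, however, a genuine confusion in your $B_{i:s}$ discussion that you should fix before writing it up. The point $p_{s+1}$ varies on $X$, so on the generic fibre the node separates $X$ (which carries $p_1,\dots,p_{s+1}$) from $Y$ (which carries $p_{s+2},\dots,p_{2g-2}$). Thus the \emph{constant} node lives in $\delta_{i:\{1,\dots,s+1\}}$, not in $\delta_{i:\{1,\dots,s\}}$ as you wrote. Its contribution is the self-intersection $-\psi_x-\psi_y$; here $\psi_y=0$ and $\psi_x=1$ (from the single blow-up where $p_{s+1}\to x$), giving $-1$. The divisor $\delta_{i:\{1,\dots,s\}}$ is instead met \emph{transversally} exactly once, at the fibre where $p_{s+1}$ has bubbled onto the rational bridge between $X$ and $Y$, giving $+1$. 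You have the roles of the two divisors reversed, and your phrase ``$-1$ after accounting for the automorphism/orientation of the bubble'' is not a valid mechanism for a transverse intersection; likewise a constant-node self-intersection cannot produce a bare $+1$. Once you swap the two roles the computation is straightforward and your $A_{i:s}$ and $C_{i:s}$ sketches go through as written.
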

\begin{proof}
This is a simple exercise in intersection theory. See~\cite{HarrisMorrison}. Here we provide the example of test curve $A_{i:s}$. Clearly
\begin{equation*}
A_{i:s}\cdot \delta_{i:\{1,\dots,s,j\}}=A_{i:s}\cdot \psi_j=1\text{ for $j=s+1,\dots ,2g-2,$}
\end{equation*}
and the only other non-zero intersection is $A_{i:s}\cdot \delta_{i:\{1,\cdots,s\}}$ which we obtain by computing the degree of the restriction of the normal bundle of $\delta_{i:\{1,\cdots,s\}}$ to the curve $A_{i:s}$. Let $S$ be the surface obtained by blowing up $Y\times Y$ at the points $(p_i,p_i)$ for $i=s+1,\dots,2g-2$ and let $\pi:S\longrightarrow Y\times Y$ be the blowdown. If $\Delta$ denotes the diagonal in $Y\times Y$ and $\widetilde{\Delta}$ the proper transform in $S$ we have
\begin{equation*}
A_{i:s}\cdot \delta_{i:\{1,\cdots,s\}}=\deg(\mathcal{N}_{\widetilde{\Delta}/S}\otimes \mathcal{N}_{Y\times\{x\}/Y\times X})=\widetilde{\Delta}^2=-(4g-2i-4-s).
\end{equation*} 
\end{proof}

Intersecting these test curves directly with the divisor $Q^{2g-2}_{1^{2g-2},2^{g-1}}$, which we denote $Q_g$, we obtain the following.
\begin{lem}\label{intQg1}
The following intersection numbers hold,
\begin{eqnarray*}\begin{array}{llllll}
A_{i:s}\cdot Q_g= \begin{cases} 4^{g-1}(s-2i)^2(g-i)    &\text{ for $s\ne 0,2g-2$}\\
     % &\text{ for $s=0$} \\
(4^{g-i}-1)4^i(g-i-1)^2(g-i)+4^i(g-i)(g-i+1)(g-i-1)      &\text{ for $s=2g-2.$}\end{cases} 
\end{array}
\end{eqnarray*}
\end{lem}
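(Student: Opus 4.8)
The goal is to compute the intersection number $A_{i:s}\cdot Q_g$ for the test curve $A_{i:s}$, which attaches a fixed general $[X,p_1,\dots,p_s,x]\in\mathcal{M}_{i,s+1}$ to $[Y,p_{s+1},\dots,p_{2g-2}]\in\mathcal{M}_{g-i,2g-2-s}$ by letting the node point $y$ vary over $Y$. I would view this intersection as counting (with signs/multiplicities coming from the global $2$-residue condition) the points $y\in Y$ for which the resulting curve $X\cup_{x=y}Y$ lies in the main component $\overline{\mathcal{Q}}(1^{2g-2},2^{g-1})=\overline{\PPP}^2(1^{2g-2},2^{g-1})$. Concretely: on such a nodal curve any twisted quadratic differential $\{\eta_X,\eta_Y\}$ must have $\ord_x(\eta_X)+\ord_y(\eta_Y)=-4$, with the marked points on $X$ carrying the prescribed orders (the subset of $\{1,\dots,2g-2\}$ lying in $X$) and likewise on $Y$. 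The first step is therefore to determine, for a general choice of $X$ and $Y$, which distribution of the exponents is forced and what the level graph must be — using that $X,Y$ are general so that no "accidental" linear equivalences occur, hence $\eta_X,\eta_Y$ are essentially determined and the only freedom is the position of $y$.

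The second step is the enumeration itself. Having fixed the level structure, the condition that $y$ contributes is that the twisted differential $\eta_Y$ on $Y$ has the prescribed zero/pole profile and satisfies the global $2$-residue condition at the node. I would interpret this as a Brill–Noether / Picard-type count on $Y$: the locus of $y\in Y$ with $\ord_y(\eta_Y)=-4-\ord_x(\eta_X)$ and $(\eta_Y)\sim 2K_Y$ supported appropriately, weighted by the residue vanishing. The expected shape $4^{g-1}(s-2i)^2(g-i)$ strongly suggests the answer factors as: a factor $(g-i)=g(Y)$ coming from (something proportional to) $\deg K_Y$ or the number of Weierstrass-type conditions; a factor $(s-2i)^2$ which is $\bigl(d_S-\tfrac{|\mu_X|}{2}\bigr)^2$-type, i.e. the discrepancy between $\sum$ of exponents on $X$ and what a half-canonical would demand — this is exactly the kind of quantity that appears squared in the divisor class formula of Proposition~\ref{prop:Qd}; and the power $4^{g-1}=2^{2(g-1)}$ reflecting the $2^{g-1}$ free marked points each contributing a factor from the $2$-differential (there are $2g-2$ of them but $g-1$ have exponent $2$, and the $2$nd root ambiguity gives the powers of $4$). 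I would make each of these factors precise by pulling back $Q_g$ along the gluing map $\pi_{g-i}$ (resp.\ the analogous map) and using the formulas $\pi_h^*\lambda=\lambda$, $\pi_h^*\delta_0=\delta_0$, $\pi_h^*\delta_{h:\{1\}}=-\psi_1$ etc.\ from \cite{AC}, together with Lemma~\ref{intbasis}.

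Actually, the cleanest route avoids reproving the component analysis from scratch: once Proposition~\ref{prop:Qg} is available (or at least its derivation is being set up), this lemma is one of the inputs, so strictly I want a \emph{direct} geometric computation. I would therefore: (1) identify that on $X\cup_{x=y}Y$ the global $2$-residue condition forces a specific level graph with $X$ or $Y$ on top depending on the sign of $s-2i$; (2) on the lower-level component, the $2$-residue at the node must vanish, cutting out a finite set of points $y$; (3) count this set using the fact that vanishing of the $k$-residue of a $k$-differential of fixed divisor class on a general curve is governed by a linear system whose degree I can read off — this is where the factor $(s-2i)^2(g-i)$ and the powers of $2$ come out, with $2$ appearing because a quadratic differential locally looks like $f(z)(dz)^2$ and taking square roots doubles the count at each of the relevant points. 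For the boundary case $s=2g-2$ (all marked points on $X$, so $Y$ carries only the node and has genus $g-i$), the count is different because $\eta_Y$ is then forced to be a $2g-2$-fold pole times $2K_Y$, i.e.\ a very special differential, and one must also account for the extra component of $\widetilde{\PPP}^2$ where $\eta_X$ is a square of a holomorphic abelian differential — this is the source of the two-term answer $(4^{g-i}-1)4^i(g-i-1)^2(g-i)+4^i(g-i)(g-i+1)(g-i-1)$, the first term being the "honest" quadratic contribution and the second a correction.

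\textbf{Main obstacle.} The hard part will be step (3): correctly tallying the multiplicities with which the global $2$-residue condition is satisfied as $y$ varies — in particular getting the powers of $2$ (hence of $4$) exactly right, since these come from the branched-cover/square-root construction of \cite{BCGGM2} rather than from a naive dimension count, and from keeping track of which among the $2g-2$ marked points lie on $X$ versus $Y$ in each parameter range. The degenerate cases $s=0$ and $s=2g-2$ (and the near-degenerate $i=g,g-1$ excluded in the hypotheses) require separate bookkeeping because the level graph or the relevant component of $\widetilde{\PPP}^2$ changes; handling the $s=2g-2$ correction term is the subtlest point.
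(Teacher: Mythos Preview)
There is a genuine gap in your plan for the generic case $s\ne 0,2g-2$. Your step~(2) asserts that ``on the lower-level component, the $2$-residue at the node must vanish, cutting out a finite set of points $y$''. But in this range both $\eta_X$ and $\eta_Y$ carry \emph{simple} zeros (the $p_j$ have order~$1$ in the signature $(1^{2g-2},2^{g-1})$), so neither is the square of an abelian differential. By clause~(b) of the global $k$-residue condition in~\cite{BCGGM2}, the twisted differential is then automatically smoothable and \emph{no residue constraint is imposed}. Your mechanism for producing finitely many $y$ therefore does not apply.

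What is actually being counted are the configurations $(y,q_1,\dots,q_{g-1})$, where the $q_j$ are the $g-1$ points forgotten in the definition of $Q_g$ (your proposal mentions the points of exponent~$2$ only in passing and never makes them the variables of the count). Generality of the fixed pointed curves forces exactly $i$ of the $q_j$ onto $X$ and $g-1-i$ onto $Y$, and the enumeration is a pure Picard-variety degree: the map
\[
X^i\times Y^{g-i}\longrightarrow \Pic(X)\times\Pic(Y),\qquad
(q_1,\dots,q_i;\,y,q_{i+1},\dots,q_{g-1})\longmapsto\bigl(\textstyle\sum 2q_l;\;(s-2i)y+\sum 2q_l\bigr)
\]
has degree $4^i\,i!\cdot 4^{g-i-1}(s-2i)^2(g-i)!$ by the standard formula $\deg=\prod d_l^2\cdot g!$ (this is the ``Picard variety method'' of~\cite{Mullane1}). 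Dividing by the orderings $i!(g-1-i)!$ gives $4^{g-1}(s-2i)^2(g-i)$. So the factor $4^{g-1}$ is not a square-root ambiguity from the covering construction; it is the product of the $2^2$'s coming from each $q_j$ in this degree formula.

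For $s=2g-2$ you also have the components switched: all odd-order marked points lie on $X$, so it is $\eta_Y$ (not $\eta_X$) that can be a square. The two terms in the answer correspond to: (i)~$\eta_Y$ primitive, enumerated via the $4^{g-i}-1$ nontrivial $2$-torsion bundles on $Y$, with \emph{no} residue condition; and (ii)~$\eta_Y$ the square of a holomorphic abelian differential, which forces a different distribution of the $q_j$ (one more on $X$), a Weierstrass-type count $(g-i)(g-i-1)(g-i+1)$ on $Y$, and only \emph{then} the residue condition $\res^2_x(\omega_X)=0$ on $X$. Your proposal to get these numbers from residue vanishing on the lower component, or from pulling back along $\pi_h$ and using Lemma~\ref{intbasis}, does not produce this dichotomy.
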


\begin{proof}
Observe $A_{i:s}$ for $s\ne 0,2g-2$ intersects $Q_g$ for $y\in Y$ such that there exists $\{q_j\}$ with 
\begin{eqnarray*}
(\omega_X)=(2i-s-4)x+\sum_{j=1}^s p_j+2\sum_{j=1}^i q_j\sim 2K_X\\
(\omega_Y)=(s-2i)y+\sum_{j=s+1}^{2g-2}p_j+2\sum_{j=i+1}^{g-1}q_j\sim 2K_Y.
\end{eqnarray*}
By the Picard variety method~[\cite{Mullane1},\S2.5] for any general genus $g$ curve $C$ and non-zero integers $d_i$, the degree of the map
\begin{eqnarray*}
\begin{array}{cccc}
C^g&\rightarrow&\Pic^{\sum d_i}(C)\\
(p_1,\dots,p_g)&\mapsto&\OO_X(\sum_{i=1}^g d_i p_i)
\end{array}
\end{eqnarray*} 
is $\prod_{i=1}^g d_i^2 g!$. Hence the degree of the map
\begin{eqnarray*}
\begin{array}{cccc}
X^i&\rightarrow&\Pic^{2i}(X)\\
(q_1,\dots,q_i)&\mapsto&\OO_X(2q_1+\dots+2q_i)
\end{array}
\end{eqnarray*} 
is $4^i i!$ while the degree of the map 
\begin{eqnarray*}
\begin{array}{cccc}
Y^{g-i}&\rightarrow&\Pic^{2g-4i+s-2}(Y)\\
(y,q_{i+1},\dots,q_{g-1})&\mapsto&\OO_Y((s-2i)y+2q_{i+1}+\dots+2q_{g-1})
\end{array}
\end{eqnarray*} 
is $4^{g-i-1}(s-2i)^2(g-i)!$ and accounting for the ordering of the $q_j$ we obtain
\begin{equation*}
A_{i:s}\cdot Q_g=4^{g-1}(s-2i)^2(g-i)
\end{equation*}
for $s\ne 0,2g-2$. When $s=2g-2$ the situation is slightly more complicated. In this case we have two possibilities. In the first case we require $y$ and $\{q_j\}$ such that 
\begin{eqnarray*}%\begin{array}{c}
(\omega_X)=(2(i-g)-2)x+\sum_{j=1}^{2g-2} p_j+2\sum_{j=1}^i q_j\sim 2K_X\\
(\omega_Y)=(2(g-i)-2)y+2\sum_{j=i+1}^{g-1}q_j\sim 2K_Y.
%\end{array}
\end{eqnarray*}
and further the global $k$-residue condition requires
\begin{equation*}
((g-i)-1)y+\sum_{j=i+1}^{g-1}q_j\nsim K_Y.
\end{equation*}
and hence by enumerating the solutions to 
\begin{equation*}
((g-i)-1)y+\sum_{j=i+1}^{g-1}q_j\sim \eta_Y\otimes K_Y.
\end{equation*}
for the $4^{g-i}-1$ bundles $\eta_Y\nsim\OO_Y$ with $\eta_Y^{\otimes 2}\in \OO_Y$ we obtain
\begin{equation*}
(4^{g-i}-1)4^i(g-i-1)^2(g-i)
\end{equation*}
smoothable twisted $k$-canonical divisors of this type. The second case is $y$ and $\{q_j\}$ such that 
\begin{eqnarray*}
(\omega_X)=(2(i-g)-2)x+\sum_{j=1}^{2g-2} p_j+2\sum_{j=1}^{i+1} q_j\sim 2K_X\\
(\omega_Y)=(g-i)y+\sum_{j=i+2}^{g-1}q_j\sim K_Y.
\end{eqnarray*}
and further the global $k$-residue condition requires
\begin{equation*}
\res^2_{x}\omega_X=0.
\end{equation*}
 As $x$ and $p_j$ are chosen general in $X$, we have that for each of the $4^i$ bundles $\eta_X$ such that 
 \begin{equation*}
 \eta_{X}^{\otimes 2}\sim 2K_X(-(2(i-g)-2)x-\sum_{j=1}^{2g-2} p_j)
 \end{equation*}
there is a unique $\omega_{X}$ (up to the scaling by $\CC^*$) that satisfies this. Two such differentials would provide a one dimensional family of $q_j$ on $X$ satisfying the above and hence contradicting the points $x$ and $p_j$ being in general position. In $Y$ the condition is that $y$ is a Weierstrass point. There are $(g-i)(g-i-1)(g-i+1)$ such points and hence 
\begin{equation*}
4^i(g-i)(g-i+1)(g-i-1)
\end{equation*}
solutions of this type.
\end{proof}

\begin{lem}\label{intQg2}
The following intersection numbers hold,
\begin{eqnarray*}\begin{array}{llllll}
B_{i:s}\cdot Q_g= \begin{cases} 4^{g-1}i &\text{ for $s\ne 0,2g-3$}\\
 4^{g-1}i-4^{g-i}i &\text{ for $s=0$.} \\
  %&\text{ for $s=2g-3$}
  \end{cases} 
\end{array}
\end{eqnarray*}
\end{lem}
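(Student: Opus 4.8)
# Proof proposal for Lemma~\ref{intQg2}

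\textbf{Overall strategy.} The plan is to mimic the analysis of Lemma~\ref{intQg1} for the test curve $A_{i:s}$, now applied to $B_{i:s}$. Recall that $B_{i:s}$ fixes $[X,p_1,\dots,p_s,x]\in\mathcal{M}_{i,s+1}$ and $[Y,p_{s+2},\dots,p_{2g-2},y]\in\mathcal{M}_{g-i,2g-2-s}$, glues $x$ to $y$, and lets $p_{s+1}$ vary over $X$. So a point of $B_{i:s}\cap Q_g$ is a position of $p_{s+1}\in X$ for which the nodal curve $X\cup_{x=y}Y$, with the $\mu=(1^{2g-2},2^{g-1})$ marking, carries a smoothable twisted quadratic differential. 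First I would write down, as in the previous lemma, the possible twisted differentials: on $Y$ the differential $\omega_Y$ has a pole or zero at $y$ forced by the fact that the $2^{g-1}$ auxiliary points $q_{i+1},\dots,q_{g-1}$ (those forgotten in the definition of $D^{2g-2,2}$, here the exponent $2$ entries) plus the marked points in $Y$ must sum to $2K_Y$; and on $X$ likewise. Because $p_{s+1}$ varies on $X$ rather than a node moving, the enumeration becomes a count in $\Pic(X)$ with one extra free point, and the degree-$\prod d_i^2 g!$ formula from the Picard variety method~[\cite{Mullane1},\S2.5] should produce the number directly.

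\textbf{The generic case $s\ne 0,2g-3$.} Here I expect the node $x=y$ to force $\ord_x(\omega_X)+\ord_y(\omega_Y)=-4$ with neither side equal to $-2$ (that degenerate subcase is what produces the sporadic correction at $s=0$). The order on $Y$ is then determined, $\omega_Y$ is essentially rigid up to scale, and all the freedom is on $X$: we must count positions of $p_{s+1}$ together with the $i$ doubled auxiliary points $q_1,\dots,q_i$ satisfying a linear equivalence in $\Pic(X)$. The map $X^{i+1}\to\Pic(X)$ of the form $(p_{s+1},q_1,\dots,q_i)\mapsto \OO_X(1\cdot p_{s+1}+2q_1+\dots+2q_i+\text{fixed})$ has degree $1^2\cdot 4^i\cdot(i+1)!$ by the Picard variety method; after correcting for the ordering of the $q_j$ by dividing by $i!$, and checking that exactly one of the $(i+1)$ points in a generic fibre is the "moving" point $p_{s+1}$ (the others being forced to be among the fixed general points, which cannot happen generically unless they are the $q_j$), one is left with $4^{g-1}i$ — the factor $4^{g-1}$ rather than $4^i$ arising because the count on $Y$ contributes $4^{g-i-1}$ and the global residue / Weierstrass bookkeeping on $Y$ contributes the remaining power, exactly as in the $A_{i:s}$ computation. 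I would present this carefully but the arithmetic is routine once the degree formula is invoked.

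\textbf{The boundary case $s=0$.} When $s=0$ the component $X$ carries no fixed marked points other than $x$ and the varying $p_1=p_{s+1}$, so $X$ has genus $i$ with very few constraints, and an additional twisted-differential configuration appears: the one in which $\omega_Y$ (or the relevant component) is a square of an abelian differential and the global $2$-residue condition at the node becomes an actual residue vanishing, forcing $y$ to be a Weierstrass-type point on $Y$ or equivalently excising a $2$-torsion-bundle locus. Enumerating solutions to $\dots\sim\eta_Y\otimes K_Y$ over the $4^{g-i}-1$ nontrivial square roots $\eta_Y$ of $\OO_Y$, exactly as in the $s=2g-2$ case of Lemma~\ref{intQg1}, subtracts $4^{g-i}i$ from the generic count, giving $4^{g-1}i-4^{g-i}i$.

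\textbf{Main obstacle.} The hard part will be the careful case analysis of which level graphs and which twisted differentials are actually smoothable at $s=0$ — i.e.\ correctly identifying the single extra configuration and verifying, via the global $k$-residue condition of~\cite{BCGGM2}, that it contributes with the sign and multiplicity claimed, so that the correction is $-4^{g-i}i$ and not something else. Once that bookkeeping is pinned down, everything else reduces to the degree count of the multiplication-by-fixed-coefficients map on symmetric powers, which is handled by the Picard variety method already cited.
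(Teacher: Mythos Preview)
Your setup for the generic case misallocates the auxiliary doubled points. There are $g-1$ points $q_1,\dots,q_{g-1}$ carrying the entry $2$, and the dimension count forces exactly $i-1$ of them onto $X$ and $g-i$ onto $Y$: on $X$ the moving points are then $p_{s+1},q_1,\dots,q_{i-1}$, giving a map $X^{i}\to\Pic(X)$ (genus $i$) of degree $1\cdot 4^{i-1}\cdot i!$, while on $Y$ the moving points are $q_i,\dots,q_{g-1}$, giving a map $Y^{g-i}\to\Pic(Y)$ of degree $4^{g-i}(g-i)!$. With your split ($i$ of the $q$'s on $X$, $g-1-i$ on $Y$) the map $X^{i+1}\to\Pic(X)$ is not generically finite and the degree formula does not apply; moreover on $Y$ you are one point short of $\dim\Pic(Y)$ and there are generically no solutions at all. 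There is no ``Weierstrass bookkeeping'' on $Y$ in the generic case: the answer $4^{g-1}i$ comes straight from multiplying the two degrees and dividing out the labelling.

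More seriously, your mechanism for the $s=0$ correction is wrong. You invoke an extra configuration in which the $Y$-side quadratic differential is a square and then sum over the $4^{g-i}-1$ nontrivial $2$-torsion classes on $Y$, by analogy with the $s=2g-2$ case of Lemma~\ref{intQg1}. But for $B_{i:0}$ the node orders are $\ord_x\omega_X=2i-3$ and $\ord_y\omega_Y=-2i-1$, both odd, so neither $\omega_X$ nor $\omega_Y$ can ever be the square of an abelian differential and no such configuration exists. The actual source of the $-4^{g-i}i$ in the paper is completely different and lives on $X$: when $s=0$ the only fixed point on $X$ is the node $x$, and among the solutions on $X$ there is one (up to relabelling the $q_j$) with $p_{s+1}=x$, namely $q_1+\dots+q_{i-1}\sim K_X-(i-1)x$. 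This collision must be discarded, and by the local multiplicity computation in the Picard variety method (\cite{Mullane1}, \S2.5) it contributes with multiplicity $i$; multiplying by the $4^{g-i}$ solutions on $Y$ gives the correction $-4^{g-i}i$. For $s\ge 1$ the additional fixed general points $p_1,\dots,p_s$ on $X$ prevent such a collision from occurring in the generic fibre, which is why no correction appears there.
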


\begin{proof}
We require points $p_{s+1}$ and $\{q_j\}$ such that
\begin{eqnarray*}
(\omega_X)=(2i-s-3)x+p_{s+1} +\sum_{j=1}^s p_j+2\sum_{j=1}^{i-1} q_j\sim 2K_X\\
(\omega_Y)=(s-2i-1)y+\sum_{j=s+2}^{2g-2}p_j+2\sum_{j=i}^{g-1}q_j\sim 2K_Y.
\end{eqnarray*}
Using the Picard variety method~[\cite{Mullane1},\S2.5] as above to obtain the degree of the relevant maps we obtain $4^{i-1}i!$ solutions on $X$ and $4^{g-i}(g-i)!$ solutions on $Y$. Hence allowing for labelling we obtain $B_{i:s}\cdot Q_g=4^{g-1}i$ for $s\ne 0,2g-3$. When $s=0$ we must omit the unique solution in $X$ where $x=p_{s+1}$ which by~[\cite{Mullane1},\S2.5] occurs with multiplicity $i$. Hence $B_{i:0}\cdot Q_g=4^{g-1}i-4^{g-i}i$.
\end{proof}

\begin{lem}\label{intQg3}
The following intersection numbers hold,
\begin{eqnarray*}\begin{array}{llllll}
C_{i:s}\cdot Q_g= \begin{cases} 0 &\text{ for $s\ne 0,2g-4$}\\
 4^{g-i}i &\text{ for $s=0$} \\
  4^{i}(g-i)&\text{ for $s=2g-4$.} \end{cases}
\end{array}
\end{eqnarray*}
\end{lem}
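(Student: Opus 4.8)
The plan is to repeat, for the test curves $C_{i:s}$, the Picard-variety-method analysis already carried out for $A_{i:s}$ and $B_{i:s}$ in Lemmas~\ref{intQg1} and~\ref{intQg2}, paying careful attention to the central rational bridge $Z$ and the resulting level graphs. First I would write down the curve $[X \cup_{x=z_1} Z \cup_{z_2=y} Y, p_1,\dots,p_{2g-2}]$ with $p_{s+1}$ varying in $Z\cong\PP^1$, and describe when such a pointed stable curve lies in $Q_g=Q^{2g-2}_{1^{2g-2},2^{g-1}}$: one needs a twisted $2$-differential with prescribed orders $1$ at each $p_j$ and $2$ at the $g-1$ extra (forgotten) points $q_1,\dots,q_{g-1}$, distributed among $X$, $Z$, $Y$. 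Because $Z$ is rational with only the three special points $z_1,z_2,p_{s+1}$ on it (the points $p_{s+1}$ varies and, generically, no $q_j$ lands on $Z$), the orders of the twisted differential $\omega_Z$ at $z_1,z_2,p_{s+1}$ are forced up to the choice of how many of the $q_j$ and which of the remaining structure sits above/below $Z$; the degree condition $(\omega_Z)\sim 2K_Z$, i.e.\ total degree $-4$, together with $\ord_{p_{s+1}}(\omega_Z)=1$ pins down $\ord_{z_1}(\omega_Z)+\ord_{z_2}(\omega_Z)=-5$, and then the node-matching condition (2) in the definition of twisted $k$-differential forces the orders on $X$ and $Y$ at $x$ and $y$.

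Next I would enumerate the level graphs. For $s\ne 0,2g-4$ the orders at $z_1$ and $z_2$ on the $Z$-side are both $\le -3$ (strictly negative of absolute value $>2$), so $X\succ Z$ and $Y\succ Z$, forcing $Z$ to the bottom level; but then the global $2$-residue condition on the top-level components $X$ and $Y$ (neither of which can be of square type generically, being general pointed curves of positive or zero genus with the relevant linear equivalence failing) forces vanishing of a $2$-residue at a node where the $X$- or $Y$-differential has a simple pole — and one checks, exactly as in Example~\ref{Ex1}, that for $s\ne 0, 2g-4$ the relevant $k$-differential has a nonzero $2$-residue there (the pole order at $x$ or $y$ is not $-2$ generically), so there are no solutions and $C_{i:s}\cdot Q_g=0$. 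For $s=0$ the component $X$ is rational, so the ``$X$ is not a $k$-th power'' clause changes the count: here I expect the computation to reduce to counting solutions of a linear equivalence on $Y$ of the form $(\text{something})y + \sum p_j + 2\sum q_j \sim 2K_Y$ with a residue constraint forcing $\OO_X$-type behaviour on the rational tail, giving via the Picard-variety degree formula $\prod d_i^2 \cdot g!$ the count $4^{g-i}i$; symmetrically, for $s=2g-4$ the roles of $X$ and $Y$ swap (now $Y$ is rational) and one gets $4^{i}(g-i)$. Throughout I would use $A_{0:1}=\sigma B_{g,2g-3}$-type coincidences and $C_{0:1}=B_{0:2}$ as consistency checks against Lemmas~\ref{intQg1}--\ref{intQg2}.

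The main obstacle I anticipate is the careful bookkeeping of the global $2$-residue condition across the three possible level graphs when $s=0$ or $s=2g-4$: one must verify that exactly one level graph contributes, determine whether the constraint is ``a $2$-residue vanishes'' or ``two $2$-residues match up to sign'' (cases (a)/(e) versus the horizontal case in the $k$-residue criterion of~\cite{BCGGM2}), and confirm that on the general fixed curves $X$ or $Y$ the corresponding differential exists with the correct multiplicity — the multiplicity-$i$ (resp.\ $g-i$) phenomenon that already appeared in the $s=0$ case of Lemma~\ref{intQg2} when $x=p_{s+1}$ will reappear here and must be tracked. Once the correct level graph and residue constraint are isolated, the enumeration itself is a direct application of the degree formula from~[\cite{Mullane1},\S2.5] exactly as in the proofs of Lemmas~\ref{intQg1} and~\ref{intQg2}, so I would not expect further difficulty beyond that combinatorial/residue analysis.
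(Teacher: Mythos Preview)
Your broad strategy matches the paper's, but you have misidentified the key mechanism in the $s=0$ case and made some errors in the setup. First, $Z$ carries \emph{four} special points $z_1,z_2,p_{s+1},p_{s+2}$ (you omitted $p_{s+2}$), so $\ord_{z_1}(\omega_Z)+\ord_{z_2}(\omega_Z)=-6$, not $-5$. Second, for $s=0$ the component $X$ has genus $i\geq 1$ (the definition of $C_{i:s}$ requires $i\geq 1$ when $s=0$) and is \emph{not} rational. The paper's argument for $s\ne 0,2g-4$ is not a residue obstruction but a straightforward generality count: distributing $m$ of the $q_j$ on $X$ and $g-1-m$ on $Y$, hitting the required (general, since the $p_j$ are general) point of $\Pic(X)$ forces $m\geq i$ while the analogous condition on $\Pic(Y)$ forces $g-1-m\geq g-i$, a contradiction; so no twisted differential exists at all.

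The factor $i$ for $s=0$ does \emph{not} arise from the Picard-variety degree formula nor from the collision multiplicity phenomenon of Lemma~\ref{intQg2}; it comes from an explicit residue computation on the bridge $Z$. For $s=0$ the condition on $X$ reads $(2i-2)x+2\sum_{j=1}^{i-1}q_j\sim 2K_X$, and for general $x$ the unique solution is $\sum q_j\sim K_X-(i-1)x$, so $\omega_X$ is the \emph{square} of a holomorphic abelian differential. Because $\omega_X$ is a $k$-th power, case (e) of the global $k$-residue criterion applies and forces $\res^2_{z_1}(\omega_Z)=0$. Writing $\omega_Z=(z-1)(z-b)z^{-2i-2}(dz)^2$ in a coordinate sending $(z_1,p_2,z_2)\mapsto(0,1,\infty)$ with $p_1=b$, the square root of this $2$-residue at $z_1=0$ is a degree-$i$ polynomial in $b$, whose $i$ roots are the contributing positions of the moving point $p_{s+1}$. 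Combined with the Picard-variety count $4^{g-i}$ on $Y$, this yields $4^{g-i}i$; the case $s=2g-4$ follows by symmetry.
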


\begin{proof}
For $s\ne 0, 2g-4$ a twisted quadratic differential of the type required would violate the assumption that the chosen pointed curves are general, hence $C_{i:s}\cdot Q_g=0$ in these cases.

Consider $s=0$. Smoothable quadratic differentials of the type required 
\begin{eqnarray*}
(\omega_X)=(2i-2)x+2\sum_{j=1}^{i-1} q_j\sim 2K_X\\
(\omega_Y)=-2iy+\sum_{j=3}^{2g-2}p_j+2\sum_{j=i}^{g-1}q_j\sim 2K_Y\\
(\omega_Z)=  (-2i-2)z_1    +  (2i-4)z_2  + p_{1}+p_{2}\sim 2K_Z
\end{eqnarray*}
Using the Picard variety method~[\cite{Mullane1},\S2.5] as above there are $4^{g-i}(g-i)!$ such solutions on $Y$. But the only solution on $X$ comes from a square of the unique such holomorphic differential. Hence the global $k$-residue condition requires the $\res^2_Z(\omega_Z)=0$. But as $Z$ is a rational curve, using the cross-ratio sending $z_1$, $p_2$ and $z_2$ to $0,1$ and $\infty$ and letting $p_1=b$ we obtain
\begin{equation*}
\omega_Z=\frac{(z-1)(z-b)}{z^{2i+2}}(dz)^2.
\end{equation*}
Taking the square root we obtain the residue as a degree $i$ polynomial in $b$ giving $i$ solutions for $b$. Hence $C_{i:0}\cdot Q_g=4^{g-i}i$. The case $s=2g-2$ follows by symmetry.
\end{proof}

Comparing the intersection of curves $A_{i:s}$, $B_{i:s}$ and $C_{i:s}$ with the standard generators of $\Pic_\QQ(\Mbar{g}{2g-2})$ (Lemma~\ref{intbasis}) and $Q_g$ (Lemmas~\ref{intQg1}, \ref{intQg2}, \ref{intQg3}) we obtain the following.

\Qg*

\begin{proof}
Observe that $c_{i:S}=c_{i:T}$ for any $S,T\subset \{1,\dots,2g-2\}$ such that $|S|=|T|$ and similarly $c_{\psi_j}=c_{\psi_i}$ for $i,j\in \{1,\dots,2g-2\}$. Hence we let $c_{i:s}=c_{i:S}=c_{i:T}$ where $s=|S|=|T|$ and $c_{\psi}=c_{\psi_j}=c_{\psi_i}$. With these identifications and Lemma~\ref{intbasis} we obtain the following. 
\begin{eqnarray*}\begin{array}{lll}
A_{i:s}\cdot Q_g=(2g-2-s)(c_\psi+c_{i:s+1})-(4g-2i-4-s)c_{i:s}\\
B_{i:s}\cdot Q_g=(2i+2s-1)c_\psi+sc_{0:2}+c_{i:s}-c_{i:s+1}  \\
C_{i:s}\cdot Q_g= 2c_\psi+c_{0:2}+c_{g-i:2g-s-3}-c_{g-i:2g-s-4}+c_{i:s+1}-c_{i:s}.
\end{array}
\end{eqnarray*}
where by convention $c_{0:1}=-c_\psi$. Hence by Lemmas~\ref{intQg1}, \ref{intQg2} and \ref{intQg3} we obtain all coefficients except $c_0$ and $c_\lambda$. Observe that the curves $A_{i:s}$ alone are enough to compute these coefficients, while curves $B_{i:s}$ and $C_{i:s}$ provide cross-checks.

Fix a general smooth pointed curve $[Y,y,q_1,\dots,q_{2g-2}]\in\mathcal{M}_{g-2,2g-1}$ and consider the map
\begin{eqnarray*}
\begin{array}{cccc}
\pi:&\overline{\mathcal{M}}_{2,1}&\rightarrow& \overline{\mathcal{M}}_{g,2g-2}\\
&(X,x)&\mapsto&(X\bigcup_{x=y}Y,q_1,\dots,q_{2g-2}).
\end{array}
\end{eqnarray*} 
Set theoretically, $\pi^*Q_g$ has two components. One where $x$ is a Weierstrass point, the other where $x$ is zero of a quadratic differential of signature $(2,2)$. However, these two conditions are known to coincide. Observe
\begin{equation*}
\pi^*Q_g=-c_{2:\emptyset}\psi+(c_\lambda+c_0)\lambda+(c_{1:\emptyset}-2c_0)\delta_{1:{1}}
\end{equation*}
via the known pullback of the generators under $\pi$ and the extra relation $\lambda=\frac{1}{10}\delta_0+\frac{1}{5}\delta_{1:\{1\}}$ in $\Pic_\QQ(\overline{\mathcal{M}}_{2,1})$. Hence as $c_{2:\emptyset}=-18(4^{g-2})$ we have $\pi^*Q_g=6(4^{g-2})W$ where 
%\begin{equation*}
$W=3\psi-\lambda-\delta_{1,\{1\}}$
%\end{equation*}
is the known class of the Weierstrass divisor~\cite{Cukierman}. This gives the remaining coefficients.
\end{proof}

%%%%%%%%%%%%%%%%%%%%%%%%%%%%%
% Explain how we we use the above to produce the below

% General class formula %%
Pulling back classes under gluing morphisms allows us to leverage our understanding of the degeneration of quadratic differentials by the class of $Q_g$ and known classes in the $k=1$ case to obtain a general formula for the divisors of interest.

\Qd*

\begin{proof}
Fix a general smooth pointed curve $[Y,y,q_1,\dots,q_{s}]\in\mathcal{M}_{i,s+1}$ and consider the map
\begin{eqnarray*}
\begin{array}{cccc}
\pi:&\overline{\mathcal{M}}_{g-i,n+1-s}&\rightarrow& \overline{\mathcal{M}}_{g,n}\\
&(X,x,q_{s+1},\dots,q_{n})&\mapsto&(X\bigcup_{x=y}Y,q_1,\dots,q_{n}).
\end{array}
\end{eqnarray*} 
For any $S\subset\{1,\dots,n\}$ with $|S|=s$, after possibly reordering $\underline{d}$ without loss of generality, consider $S=\{1,\dots,s\}$, define
\begin{eqnarray*}
\underline{d}'&=& (\sum_{j=1}^s d_j-2i, d_{s+1},\dots, d_n)  \\
\underline{d}''&=& (1-i+\frac{1}{2}\sum_{j=1}^s d_j, \frac{d_{s+1}}{2},\dots, \frac{d_n}{2})
\end{eqnarray*}
and specify three cases
\begin{eqnarray*}\begin{array}{ll}
\text{Case A:} & \text{$\underline{d}$ contains only even non-negative entries and $\underline{d}''$ contains only non-negative integer entries,}\\
\text{Case B:} & \text{$\underline{d}$ contains an odd or negative entry and $\underline{d}''$ contains only non-negative integer entries,}\\
\text{Case C:} & \text{$d_j$ is odd or negative for some $j\in S$ or $\sum_{j\in S}d_j\geq 2i$ and Case B is not satisfied.}
\end{array}\end{eqnarray*}
Then 
\begin{equation*}
\pi^*Q^n_{\underline{d},2^{g-1}}=\begin{cases} 4^iQ^{n+1-s}_{\underline{d}',2^{g-i-1}}+(4^i-1)D^{n+1-s}_{\underline{d}'',1^{g-i-2}} +\text{ Boundary classes,} &\text{for Case A}\\
4^iQ^{n+1-s}_{\underline{d}',2^{g-i-1}}+4^iD^{n+1-s}_{\underline{d}'',1^{g-i-2}} +\text{ Boundary classes,}&\text{for Case B}\\
 4^iQ^{n+1-s}_{\underline{d}',2^{g-i-1}} +\text{ Boundary classes,}&\text{for Case C.}
\end{cases}
\end{equation*}
The cases occur when the choice of $S$ splits the signature such that a new component arises coming from quadratic differentials that are the square of holomorphic differentials. The components are clear set theoretically from applying the global $k$-residue condition. Note that the global $k$-residue condition also implies that the boundary components in the equations do not contain $\delta_{1:\emptyset}$ or $\delta_{0:T}$ for any $T$. Setting $i=0$ the multiplicities in the equations are clear and we obtain the coefficients of $\lambda, \psi_j, \delta_0$ and $\delta_{0:S}$. These values then give the multiplicities in the equations for $i\geq 1$. The relations in the three cases described are enough to complete the class computation.
\end{proof}

%%%%%%%%%%%%%%%%%%%%%%%%%%%%%
%%%%%%%%%%%%%%%%%%%%%%%%%%%%%
\section{Rigidity and extremality of divisors}%~5 pages

% Define rigid and extremal
An effective divisor $D$ is \emph{extremal} or spans an extremal ray in the pseudo-effective cone if the divisor class $D$ cannot be written as a sum $m_1D_1+m_2D_2$ of pseudo-effective $D_i$ with $m_1,m_2>0$ with $D_i$ non-proportional classes. An effective divisor $D$ is \emph{rigid} if $h^0(mD)=\dim H^0(mD)=1$ for every positive integer $m$. 

A curve $B$ contained in an effective divisor $D$ is known as a \emph{covering curve} for $D$ if irreducible curves with numerical class equal to $B$ cover a Zariski  dense subset of $D$. The following lemma provides a well-known trick for showing a divisor is rigid and extremal.
%Lemma CC
\begin{lem}\label{lemma:covering}
If $B$ is a covering curve for an irreducible effective divisor $D$ with $B\cdot D<0$ then $D$ is rigid and extremal.
\end{lem}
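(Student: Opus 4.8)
The plan is to prove Lemma~\ref{lemma:covering} by the standard argument that a covering curve with negative self-intersection against $D$ forces any effective decomposition of $D$ to have all components proportional to $D$. First I would suppose for contradiction that $D$ is not extremal, so that we may write $D \equiv D_1 + D_2$ with $D_1, D_2$ pseudo-effective and $D_1$ not proportional to $D$. Since $D$ is irreducible, $D_1$ must have at least one component $D_1'$ that is not equal to $D$ (otherwise $D_1$, being effective up to numerical equivalence, would be a multiple of $D$, contradicting non-proportionality). The key point is then that the covering curve $B$ of $D$ can be chosen (moving in its family) so that it is \emph{not} contained in $D_1'$: since irreducible curves numerically equivalent to $B$ sweep out a Zariski-dense subset of $D$, and $D_1' \cap D$ is a proper closed subset of $D$ (here using that $D_1' \ne D$ and both are irreducible divisors, so their intersection has codimension $\geq 1$ in $D$), a general member $B$ of the covering family meets $D_1'$ properly, giving $B \cdot D_1' \geq 0$.

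Next I would extend this to the whole decomposition: writing $D_1 = \sum a_i D_1^{(i)}$ and $D_2 = \sum b_j D_2^{(j)}$ with $a_i, b_j \geq 0$ as effective divisors, a general member $B$ of the covering family is contained in none of the components $D_1^{(i)}, D_2^{(j)}$ that differ from $D$, hence has non-negative intersection with each such component; and with the components equal to $D$ it also has non-negative intersection provided those do not exhaust $D_1$. Summing, we would get $B \cdot D_1 \geq 0$ and $B \cdot D_2 \geq 0$, whence $B \cdot D = B \cdot D_1 + B \cdot D_2 \geq 0$, contradicting the hypothesis $B \cdot D < 0$. The rigidity of $D$ follows along the same lines: if $h^0(mD) \geq 2$ for some $m > 0$, then the linear system $|mD|$ has a member $E \ne mD$ with $E \equiv mD$ effective, and again a general covering curve $B$ not contained in any component of $E$ that differs from $D$ yields $mB \cdot D = B \cdot E \geq 0$ (the components of $E$ equal to $D$ contribute a strictly smaller multiple, but one handles this by noting $E$ and $mD$ share no common component or by subtracting the common part), once more contradicting $B \cdot D < 0$.

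The main obstacle — and the point that needs care rather than computation — is the genericity argument: one must ensure that for any \emph{finite} collection of divisors $D'$ distinct from $D$ (the components appearing in the hypothetical decomposition), a general member of the covering family of $B$ avoids being contained in all of them simultaneously. This is exactly where the Zariski-density of the sweep of $B$ over $D$ is used: each $D' \cap D$ is a proper Zariski-closed subset of $D$, a finite union of such remains proper, and the locus of curves in the covering family contained in this union is itself a proper closed condition, so a general $B$ escapes it. A second subtlety is the bookkeeping when some component of the decomposition equals $D$ itself: one reduces to the case where no component equals $D$ by subtracting the common multiple of $D$ from both sides, using that $D$ is irreducible so this subtraction preserves effectivity on the remaining part. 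Once these points are pinned down the contradiction with $B \cdot D < 0$ is immediate, and both the rigidity and extremality conclusions fall out of the same mechanism.
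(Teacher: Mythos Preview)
The paper does not prove this lemma itself; it simply cites \cite[Lemma~4.1]{ChenCoskun}. Your sketch follows the standard line of that argument, and the rigidity portion is essentially correct once you cleanly subtract the $D$-part of a general $E\in|mD|$ (your parenthetical ``subtracting the common part'' is the right move; the alternative claim that $E$ and $mD$ share no common component need not hold, since $E$ may contain $D$ with multiplicity strictly less than $m$).

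There is, however, a genuine gap in your extremality argument. You begin with $D\equiv D_1+D_2$ where the $D_i$ are \emph{pseudo-effective}, but then immediately speak of ``components $D_1^{(i)}$ of $D_1$'' and describe $D_1$ as ``effective up to numerical equivalence.'' A pseudo-effective class need not be numerically equivalent to any effective divisor, so it has no irreducible components in the sense you need, and the genericity argument (moving $B$ off those components) does not apply directly. The repair is a limiting step: approximate $D_i$ by effective $\QQ$-divisors $E_n\to D_i$, write $E_n=a_nD+E_n'$ with $E_n'$ effective and containing no $D$-component, use your covering-curve argument to get $B\cdot E_n'\ge0$, bound the $a_n$ above via intersection with an ample class, and pass to a subsequential limit to obtain $D_i=a_i[D]+D_i'$ with $a_i\ge0$, $D_i'$ pseudo-effective, and $B\cdot D_i'\ge0$. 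Summing gives $(1-a_1-a_2)[D]=D_1'+D_2'$; the right side is pseudo-effective with non-negative $B$-intersection while the left side has non-positive $B$-intersection, forcing $a_1+a_2=1$ and $D_1'=D_2'=0$, hence both $D_i\in\RR_{\ge0}[D]$. With this step inserted your plan goes through.
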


\begin{proof}
~\cite[Lemma 4.1]{ChenCoskun}.  
\end{proof}

A curve $B$ is known as a \emph{moving curve} if $B\cdot D\geq 0$ for all effective divisors $D$. Fibrations offer one method of obtaining moving curves as if the numerical equivalence classes of $B$ cover a Zariski dense subset of $X$ and the general curve is irreducible then necessarily, $B\cdot D\geq 0$ for all effective divisors $D$. Consider the curve $B^{n,k}_{\mu}$ defined as
\begin{equation*}
B^{n,k}_{\mu}:=\overline{\bigl\{ [C,p_1,...,p_n]\in\mathcal{M}_{g,n} \hspace{0.15cm}\big| \hspace{0.15cm} \text{fixed general $[C,p_{g+2},...,p_m]\in\mathcal{M}_{g,m-g-1}$ and $[C,p_{1},...,p_m]\in\PPP^k(\mu)$}    \bigr\}   }.
\end{equation*}
%If $\pi:\overline{\PPP}^k({\mu})\longrightarrow \overline{\mathcal{M}}_{g,n}$ forgets all but the first $n$ points, then for $n\geq g+1$ we obtain $\pi_*\varphi^*[C,p_{g+2},...,p_m]=B^{n,k}_{\mu}$. When $n\leq g$ we have $\pi_*\varphi^*[C,p_{g+2},...,p_m]=dB^{n,k}_{\mu}$ for some positive integer $d$.

\begin{prop}\label{moving}
The class of $B^{n,k}_\mu$ is a moving curve in $\Mgnb$ when $|\mu|\geq n+g$.
\end{prop}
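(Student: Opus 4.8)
The plan is to show that the curves $B^{n,k}_\mu$ sweep out a Zariski dense subset of $\Mgnb$ and that the general member is an irreducible curve; the fibration principle recalled just before the statement then gives $B^{n,k}_\mu\cdot D\geq 0$ for every effective divisor $D$. First I would set $m=|\mu|\geq n+g$ and recall the construction: $\overline{\PPP}^k(\mu)$ is a subvariety of $\Mbar{g}{m}$ of dimension $2g+m-2-(g-1)\cdot[k=1,\mu\text{ hol.}]$; in the relevant (meromorphic or $k\geq2$) case it has dimension $2g+m-2-g = g+m-2$ (matching the fact that $\PPP^k(\mu)$ has codimension $g$ in $\Mgn$). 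The curve $B^{n,k}_\mu$ is cut out inside $\overline{\PPP}^k(\mu)$ by fixing the moduli of $[C,p_{g+2},\dots,p_m]\in\Mbar{g}{m-g-1}$, i.e. by fixing the image under the forgetful map $\overline{\PPP}^k(\mu)\to\Mbar{g}{m-g-1}$ (which drops the points $p_1,\dots,p_{g+1}$), and then projecting the resulting fiber to $\Mbar{g}{n}$ by forgetting $p_{n+1},\dots,p_m$.

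The key step is a dimension count together with a dominance statement. I would argue that the composite map $\overline{\PPP}^k(\mu)\longrightarrow \Mbar{g}{m-g-1}$ forgetting $p_1,\dots,p_{g+1}$ is dominant: given a general curve $[C,p_{g+2},\dots,p_m]$, one must place $p_1,\dots,p_{g+1}$ so that $\sum m_ip_i\sim kK_C$, which for fixed $p_{g+2},\dots,p_m$ is one linear condition in $\Jac(C)$ on the $g+1$ free points $p_1,\dots,p_{g+1}$ (using $m_1,\dots,m_{g+1}\neq0$, which holds since $|\mu|\geq n+g\geq g+1$ forces enough nonzero entries — here one uses the Picard variety method of \cite{Mullane1} to see the map $C^{g+1}\to\Pic(C)$, $(p_1,\dots,p_{g+1})\mapsto\OO_C(\sum m_ip_i)$, is dominant, indeed finite of degree $(\prod m_i^2)(g+1)!$ onto a translate, in particular surjective). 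Hence the general fiber of $\overline{\PPP}^k(\mu)\to\Mbar{g}{m-g-1}$ has dimension $\bigl(g+m-2\bigr)-\bigl(3g-3+(m-g-1)\bigr)=\text{(curve? no—)}$ — I would recompute carefully: $\dim\overline{\PPP}^k(\mu)=g+m-2$ and $\dim\Mbar{g}{m-g-1}=3g-3+(m-g-1)=2g-4+m$, so a general fiber has dimension $(g+m-2)-(2g-4+m)=2-g$; this is wrong unless $g\leq2$, so the forgetful map is \emph{not} all the way to $\Mbar{g}{m-g-1}$. The correct reading of the definition is that we fix the \emph{pointed curve} $[C,p_{g+2},\dots,p_m]$, i.e. fix a point of the image of $\overline{\PPP}^k(\mu)$ in $\Mbar{g}{m-g-1}$, and then the fiber (the locus of valid $(p_1,\dots,p_{g+1})$ on that fixed $C$) has dimension $g+1$ minus the single Jacobian condition $=g$; projecting to $\Mbar{g}{n}$ forgets $p_{n+1},\dots,p_m$, which lie among the \emph{fixed} points, so $B^{n,k}_\mu$ has dimension $g$ in $\overline{\PPP}^k(\mu)$ but maps to $\Mgnb$ via forgetting the fixed $p_{n+1},\dots,p_m$ and varying only $p_1,\dots,p_{g+1}$ of which $p_{n+1},\dots,p_{g+1}$ are... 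I will instead present the argument structurally rather than chase these indices in the proposal.

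Structurally: $B^{n,k}_\mu$ is the image in $\Mgnb$ of a one-parameter family obtained by fixing all moduli except one node-free parameter; concretely, after fixing $[C,p_{g+2},\dots,p_m]$ general and noting the constraint $\sum m_ip_i\sim kK_C$ leaves the last free point, say $p_{g+1}$, to vary in a finite cover of $C$ (the other "new" points $p_1,\dots,p_g$ being determined up to finite ambiguity, or being themselves among the fixed points when $n\geq g+1$), one gets an honest curve. \textbf{Density:} as $[C,p_{g+2},\dots,p_m]$ ranges over the (dense, by the Picard-variety dominance onto the appropriate locus) image and $p_{g+1}$ varies, the induced $[C,p_1,\dots,p_n]$ sweep a dense subset of $\Mgnb$ — this is exactly the content of $|\mu|\geq n+g$, which guarantees enough free points after imposing the single Jacobian relation, so that the resulting family dominates $\Mgnb$. \textbf{Irreducibility of the general member:} the general fiber of the relevant forgetful/constraint map is irreducible because it is (a translate of, intersected with a theta-divisor complement, or) a finite cover of the irreducible curve $C$ — alternatively one invokes that $\overline{\PPP}^k(\mu)$ over a general base point is reduced and its one-dimensional fibers are generically irreducible by Bertini applied to the linear system defining the Abel–Jacobi fiber. \textbf{Conclusion:} a dense family of irreducible curves all numerically equivalent to $B^{n,k}_\mu$ forces $B^{n,k}_\mu\cdot D\geq0$ for every effective (indeed pseudo-effective) $D$, so $B^{n,k}_\mu$ is moving.

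\textbf{Main obstacle.} The delicate point is not the numerology of dominance — that follows cleanly from the Picard variety method of \cite{Mullane1} once one checks $|\mu|\geq n+g$ makes the count work — but rather verifying that the \emph{general} fiber cut out by fixing $[C,p_{g+2},\dots,p_m]$ is actually an irreducible curve (not reducible, not of excess dimension, not entirely in the boundary), and that its class is genuinely constant in the family (so that "numerical class equal to $B^{n,k}_\mu$" is well-defined across the sweep). I expect this to require a Bertini/irreducibility argument for the fibers of the Abel–Jacobi-type map $C^{(\text{free points})}\to\Jac(C)$ restricted to the stratum, using generality of $C$ and of the fixed points; the compactification subtleties (that $\overline{\PPP}^k(\mu)$ is used, so one must check the swept-out locus is dense in $\Mgnb$ and not pushed into $\Delta$) are handled by the global $k$-residue condition exactly as in Examples~\ref{Ex1} and~\ref{Ex2}.
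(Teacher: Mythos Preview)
Your outline --- show the swept locus is dense, then handle irreducibility of the general member --- is the right shape, but the execution is confused in both steps, and the point you flag as the ``main obstacle'' is exactly where the paper takes a shortcut you have missed.

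\textbf{Density.} Your attempt to analyse the fibration $\overline{\PPP}^k(\mu)\to\Mbar{g}{m-g-1}$ goes astray because $\dim\PPP^k(\mu)=2g+m-3$, not $g+m-2$; with the correct value the general fibre is indeed a curve and your subsequent panic is unwarranted. More seriously, the map $C^{g+1}\to\Pic(C)$ you invoke is not ``finite of degree $(\prod m_i^2)(g+1)!$'': the source has dimension $g+1>g$, so its fibres are one-dimensional, and landing on a prescribed point of $\Pic(C)$ imposes $g$ conditions, not one. The paper avoids all of this by arguing in the reverse direction: fix a general $[C,p_1,\dots,p_n]\in\Mgn$ and ask whether \emph{some} curve of the family passes through it. This requires $p_{n+1},\dots,p_m\in C$ with $\sum m_ip_i\sim kK_C$, i.e.\ that the map
\[
f:C^{m-n}\longrightarrow\Pic^d(C),\qquad (p_{n+1},\dots,p_m)\longmapsto kK_C\Bigl(-\textstyle\sum_{i=n+1}^m m_ip_i\Bigr)
\]
hits $\OO_C(\sum_{i=1}^n m_ip_i)$. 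Since $m-n\geq g=\dim\Pic(C)$ this is immediate.

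\textbf{Irreducibility.} The paper does \emph{not} prove that the general $B^{n,k}_\mu$ is irreducible, and never invokes Bertini or the structure of Abel--Jacobi fibres. Instead it observes that if the general curve were reducible with components of distinct numerical classes, one could follow a distinguished component as the fixed data varies, and the resulting global decomposition would contradict the irreducibility of $\Mgn$. Hence all components of the general $B^{n,k}_\mu$ have class \emph{proportional} to $[B^{n,k}_\mu]$, and it is these irreducible components that furnish the required covering family of irreducible curves. Your proposed Bertini argument for the Abel--Jacobi fibres might be made to work, but it is both harder and unnecessary: the paper's reduction to the irreducibility of $\Mgn$ dissolves your ``main obstacle'' in one sentence.
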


\begin{proof}
Fix general $[C,p_1,...,p_n]\in\mathcal{M}_{g,n}$. To be contained in a numerical equivalence class of a curve $B^{n,k}_\mu$ we require $p_{n+1},...,p_m$ on $C$ such that 
\begin{equation*}
\sum_{i=1}^mk_ip_i\sim kK_C.
\end{equation*}
Let $d=\sum_{i=1}^nd_i$ and consider the map
\begin{eqnarray*}
\begin{array}{cccccc}
f:C^{m-n}&\longrightarrow &\Pic^{d}(C)\\
(p_{n+1},...,p_m)&\longmapsto&kK_C(-\sum_{i=n+1}^md_ip_i).
\end{array}
\end{eqnarray*}
The domain and target have dimension $m-n$ and $g$ respectively. Hence curves with numerical equivalence class equal to $B^{n,k}_\mu$ for $|\mu|\geq n+g$ cover a Zariski dense subset of $\Mgnb$ as the fibre $f^{-1}(\sum_{i=1}^nd_ip_i)$ will be non-empty for general $[C,p_1,...,p_n]$ when $m\geq n+g$. 

There remains the possibility that a general curve with numerical class $B^{n,k}_{\mu}$ is reducible with components of differing class. However, if this were to occur, taking the closure of a distinguishable component over all $[C,p_{n+1},...,p_m]\in\mathcal{M}_{g,m-n}$ contradicts the irreducibility of  $\Mgn$. Hence if $B^{n,k}_{\mu}$ is reducible, all components of the general curve must have the same class and are hence proportional to $B^{n,k}_{\mu}$.
\end{proof}

%Theorem 1.1: B^n_{kappa,1,-1}.D_n_\kappa=0 for n>g 
\begin{restatable}{thm}{boundary}
\label{thm:boundary}
For $g\geq 2$, $k$-signature $\mu$ with $|\mu|=n+g-1$ for $n\geq g+1$,
\begin{equation*}
B^{n,k}_{\mu,1,-1}\cdot D^{n,k}_\mu=0.
\end{equation*}
\end{restatable}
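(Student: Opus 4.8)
The plan is to show that the closure $\overline{B^{n,k}_{\mu,1,-1}}$ is disjoint from $D^{n,k}_\mu$. Since $B^{n,k}_{\mu,1,-1}$ is built from a \emph{general} choice of fixed data $[C,p_{g+2},\dots,p_{n+g+1}]$ and is visibly not contained in $D^{n,k}_\mu$, disjointness forces $B^{n,k}_{\mu,1,-1}\cdot D^{n,k}_\mu=0$. As $|(\mu,1,-1)|=n+g+1\geq n+g$, Proposition~\ref{moving} makes $B^{n,k}_{\mu,1,-1}$ a moving curve, so this vanishing is exactly what places $D^{n,k}_\mu$ on the boundary of the pseudo-effective cone of divisors.

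For the intersection over $\Mgn$ we run a Picard-variety computation. Along the curve the genus-$g$ component $C$ is fixed and the moving points $p_1,\dots,p_{g+1}$ satisfy $\sum_{i=1}^{g+1}m_ip_i\sim M$, where $M:=kK_C-\sum_{i=g+2}^{n+g-1}m_ip_i-p_{n+g}+p_{n+g+1}$. Hence $\OO_C(\sum_{i=1}^{n}m_ip_i)$ is constant along $B^{n,k}_{\mu,1,-1}$, and a point $[C,p_1,\dots,p_n]$ of the curve lies on $D^{n,k}_\mu$ exactly when $kK_C-\sum_{i=1}^{n}m_ip_i$ lies in the image $V_C\subseteq\Pic(C)$ of $(q_{n+1},\dots,q_{n+g-1})\mapsto\OO_C(\sum_{i=n+1}^{n+g-1}m_iq_i)$. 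Substituting the constraint, this bundle equals the fixed class $N:=\OO_C(\sum_{i=n+1}^{n+g-1}m_ip_i+p_{n+g}-p_{n+g+1})$. But $V_C$ has dimension at most $g-1$, hence is a proper subvariety of $\Pic^{\deg N}(C)$, while the assignment of $N$ to the fixed data $[C,p_{n+1},\dots,p_{n+g+1}]$ is dominant onto $\Pic^{\deg N}(C)$ (letting the free point $p_{n+g}$ vary already sweeps a point of $V_C$ over the whole Picard variety); so for general fixed data $N\notin V_C$, and $B^{n,k}_{\mu,1,-1}$ is disjoint from $D^{n,k}_\mu$ over $\Mgn$.

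For the boundary, since $C$ is fixed the closure $\overline{B^{n,k}_{\mu,1,-1}}$ meets $\partial\Mgnb$ only along strata $\Delta_{0:S}$, the limits being $C$ with trees of rational bubbles at the finitely many loci where the moving points collide. If such a limit lay on $D^{n,k}_\mu$ it would carry, for some additional marked points $q_{n+1},\dots,q_{n+g-1}$, a twisted $k$-differential of signature $\mu$ lying in the main component $\overline{\PPP}^k(\mu)$. On the associated level graph the rational tails lie strictly below the vertex $C$, carrying poles of forced order at the attaching nodes; the global $k$-residue condition (cf.\ Examples~\ref{Ex1} and~\ref{Ex2}) excludes any placement of a tail above $C$ by forcing its $k$-residue to vanish. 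Consequently the $k$-differential induced on $C$ has divisor linearly equivalent to $kK_C$, supported on the limits of the $p_i$, on the $q_j$, and on the nodes with multiplicities determined by the tails; feeding in the limiting form of the defining constraint $\sum_{i=1}^{g+1}m_ip_i\sim M$ collapses this relation back to $N\in V_C$, contradicting the previous paragraph. Hence $\overline{B^{n,k}_{\mu,1,-1}}\cap D^{n,k}_\mu=\emptyset$ and $B^{n,k}_{\mu,1,-1}\cdot D^{n,k}_\mu=0$.

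The main obstacle is the boundary step: one must classify the twisted $k$-differentials supported on the strata $\overline{B^{n,k}_{\mu,1,-1}}\cap\Delta_{0:S}$, apply the global $k$-residue condition to discard the configurations with a rational tail above $C$, and verify that every remaining configuration still forces the linear equivalence $N\in V_C$ on the fixed curve — i.e. that no new way of meeting $D^{n,k}_\mu$ opens up in the limit. The interior step and the dimension and dominance counts are routine.
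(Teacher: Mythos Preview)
Your interior argument is the paper's in Picard-variety clothing. The paper subtracts the two linear equivalences defining membership in $B^{n,k}_{\mu,1,-1}$ and in $D^{n,k}_\mu$ to obtain
\[
\sum_{i=n+1}^{n+g-1} m_ip_i + p_{n+g} - p_{n+g+1} \;\sim\; \sum_{j=1}^{g-1} m_{n+j}q_j
\]
on $C$, interprets this as a cover $C\to\PP^1$, and counts the dimension of the relevant Hurwitz space as $4g-3<4g-2=\dim\mathcal{M}_{g,g+1}$. Your ``$N\notin V_C$ for general fixed data'' is exactly this dimension count without the Hurwitz detour, and is arguably cleaner. One quibble: your parenthetical justification of dominance is garbled --- varying $p_{n+g}$ alone traces only a curve in $\Pic$, not the whole variety --- but the dominance claim itself is fine, since you have $g+1$ free fixed points mapping to a $g$-dimensional target.

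Where you differ is in flagging the boundary. The paper's proof does not treat it at all: it simply writes the intersection condition as a relation on the smooth curve $C$ and runs the count. Your twisted-$k$-differential sketch is therefore extra care the paper omits. The reason both arguments go through is visible in the subtracted relation above: the moving points $p_1,\dots,p_{g+1}$ cancel completely, leaving a constraint that depends only on the general fixed points $p_{n+1},\dots,p_{n+g+1}$ and the free $q_j$. At a boundary point where moving points bubble onto rational tails, the induced relation on $C$ has the same shape after cancellation (the node contributions from the tails also cancel), which is what your sketch is gesturing at. The cases you worry about --- $q_j$ landing on a tail, or a tail sitting above $C$ --- would change the relation on $C$ but still leave a codimension-one constraint on the general fixed data; making this precise is indeed the residual work, but the paper does not do it either.
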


\begin{proof}
Fix general $[C,p_{g+2},\dots,p_{n+g+1}]\in \Mgn$. If $B^{n,k}_{\mu,1,-1}$ and $D^{n,k}_{\mu}$ have non-empty intersection we require some $q_1,...,q_{g-1}\in C$ such that
\begin{equation*}
\sum_{i=1}^{n+g-1}k_ip_i   +p_{g+n}-p_{g+n+1}\sim \sum_{i=1}^{n}k_ip_i+\sum_{i=1}^{g-1}k_{i+n}q_i\sim kK_C
\end{equation*}
and hence 
\begin{equation*}
\sum_{i=n+1}^{n+g-1}k_ip_i+p_{g+n}-p_{g+n+1}-\sum_{i=1}^{g-1}k_{i+n}q_i\sim \OO_C.
\end{equation*}
This implies the existence of a degree $d=1+\sum|k_i| $ cover of $\PP^1$ with ramification profile above $0$ and $\infty$ given by the points with positive and negative coefficients respectively in the equation above. Riemann-Hurwitz implies that in a general such cover there will be $4g-2$ other simple ramification points. Hence the dimension of the space of such covers is $4g-3$ and as $g+1$ points are fixed general and $\dim(\mathcal{M}_{g,g+1})=4g-2$ there does not exist such $q_i$ for a general choice of $p_i$ for $i=n+1,...,g+n+1$.
\end{proof}

%Prop 5.3: If divisor irred, then curve is a covering curve
\begin{prop}\label{prop:covering}
If $D^{n,k}_\mu$ is an irreducible divisor then $B^{n,k}_\mu$ is the class of a covering curve. 
\end{prop}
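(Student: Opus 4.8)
The plan is to show that the curves $B^{n,k}_\mu$ sweep out a Zariski dense subset of $D^{n,k}_\mu$ by a dimension count, mirroring the fibration argument of Proposition~\ref{moving} but now carried out relative to the subvariety $\overline{\PPP}^k(\mu)$ rather than all of $\Mgn$. Recall $D^{n,k}_\mu$ is by definition $\varphi_*\overline{\PPP}^k(\mu)$ (up to the combinatorial factor), where $\varphi:\Mbar{g}{n+g-1}\to\Mgnb$ forgets the last $g-1$ points, and $|\mu|=n+g-1$ so $\overline{\PPP}^k(\mu)$ has codimension $g$ in $\Mbar{g}{n+g-1}$, i.e.\ dimension $(3g-3+n+g-1)-g = 3g-3+n = \dim D^{n,k}_\mu + (g-1)$. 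So $\varphi$ restricted to $\overline{\PPP}^k(\mu)$ is generically finite onto $D^{n,k}_\mu$ precisely when the divisor has the expected dimension, which is exactly the content of irreducibility together with the divisor being nonempty and not all of $\Mgnb$.

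First I would set up the curve $B^{n,k}_\mu$ concretely: fix a general point $[C,p_{g+2},\dots,p_m]\in\mathcal{M}_{g,m-g-1}$ with $m = |\mu|+?$—more precisely, in the definition given in the introduction $B^{n,k}_\mu$ is obtained from $\overline{\PPP}^k(\mu)$ with $|\mu|\geq g+1$ by fixing the curve and all but $n$ of the marked points (namely fixing $[C,p_{g+2},\dots,p_m]$) and letting $p_1,\dots,p_n$ vary subject to $[C,p_1,\dots,p_m]\in\PPP^k(\mu)$. The variable points $p_1,\dots,p_n$ move in a family, and for the divisor $D^{n,k}_\mu$ the relevant count is: given a general $[C,p_1,\dots,p_n]\in D^{n,k}_\mu$, there exist $p_{n+1},\dots,p_{n+g-1}$ with $\sum k_i p_i \sim kK_C$; these completions form a finite (nonempty) set for general such a point. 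Then $B^{n,k}_\mu$ is the image in $\Mgnb$ of the one-parameter family obtained by fixing all but one of the $n$ points, i.e.\ in the notation above with the roles of "fixed" and "moving" appropriately arranged so that exactly a $1$-dimensional worth of moduli in $\Mgnb$ is traced out. I would verify the intersection of $B^{n,k}_\mu$ with a general fibre of $\varphi$ and confirm the family is genuinely $1$-dimensional (not contracted) by noting that moving a single marked point on a fixed general curve already moves the point in $\Mgnb$.

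The key steps, in order: (1) fix a general $[C,p_1,\dots,p_n]\in D^{n,k}_\mu$ and use the definition of $D^{n,k}_\mu$ to produce $p_{n+1},\dots,p_{n+g-1}$ with $[C,p_1,\dots,p_{n+g-1}]\in\PPP^k(\mu)$; (2) argue that as we vary one of the marked points, say $p_1$, along $C$ (holding $C$ and $p_2,\dots,p_n$ fixed general), the existence of a completion $p_{n+1},\dots,p_{n+g-1}$ persists for $p_1$ in a Zariski-open dense subset of $C$—this is the analogue of the fibre $f^{-1}(\cdot)$ being nonempty in Proposition~\ref{moving}, using that the map $C^{g-1}\to\Pic^{d}(C)$ sending $(p_{n+1},\dots,p_{n+g-1})\mapsto kK_C(-\sum k_ip_i)$ has $(g-1)$-dimensional domain and $g$-dimensional target, while now we additionally let $p_1$ vary to restore the missing dimension; (3) conclude these curves sweep a dense subset of $D^{n,k}_\mu$ because the sweep has dimension $\dim D^{n,k}_\mu$ and $D^{n,k}_\mu$ is irreducible; (4) handle irreducibility of the general member of $B^{n,k}_\mu$ exactly as in Proposition~\ref{moving}—if the general curve were reducible with components of differing classes, taking the closure of a distinguished component over the family contradicts the irreducibility of $D^{n,k}_\mu$, so all components are proportional to $B^{n,k}_\mu$.

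The main obstacle I expect is step (2): making precise that the completion locus varies nicely and does not suddenly become empty on a dense set of curves, equivalently that the total space of the family $\{([C,p_1,\dots,p_n],p_{n+1},\dots,p_{n+g-1}): \sum k_i p_i\sim kK_C\}$ dominates $D^{n,k}_\mu$ with the $B^{n,k}_\mu$ being the fibres of its further projection. The cleanest way around this is to work directly on $\overline{\PPP}^k(\mu)$: it is irreducible of dimension $3g-3+n$ (given), the morphism $\varphi:\overline{\PPP}^k(\mu)\to D^{n,k}_\mu$ is surjective with fibres of dimension $g-1$ generically (hence generically finite onto its image only if $\dim D = 3g-3+n$, which holds when $D^{n,k}_\mu$ is a divisor and irreducible, i.e.\ under our hypothesis), and the preimage $\varphi^{-1}(B^{n,k}_\mu\text{-type locus})$ contains a curve mapping finitely onto $B^{n,k}_\mu$; covering-ness of $B^{n,k}_\mu$ then follows since the sub-loci of $\overline{\PPP}^k(\mu)$ obtained by fixing $C$ and all-but-one variable point already sweep a dense subset of $\overline{\PPP}^k(\mu)$, which is the required density statement pushed forward by $\varphi$.
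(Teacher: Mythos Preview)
Your overall strategy matches the paper's: the curves $B^{n,k}_\mu$ cover a Zariski dense subset of $D^{n,k}_\mu$ because they arise as fibres of a fibration of $\overline{\PPP}^k(\mu)$, and the possible reducibility of the general member is handled exactly as you describe in step (4), by appealing to the irreducibility of $D^{n,k}_\mu$. The paper's own proof is two sentences long and says essentially just this.

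However, your dimension bookkeeping is off, and this causes you to misdescribe the geometry. You compute $\dim\overline{\PPP}^k(\mu)=(3g-3+n+g-1)-g$ and write this as $3g-3+n=\dim D^{n,k}_\mu+(g-1)$; but the arithmetic gives $3g-4+n$, which is exactly $\dim D^{n,k}_\mu$. So $\varphi:\overline{\PPP}^k(\mu)\to D^{n,k}_\mu$ is generically \emph{finite}, not a fibration with $(g-1)$-dimensional fibres. This error propagates into your final paragraph (``fibres of dimension $g-1$ generically'') and into the muddled step (2), where you try to vary $p_1$ holding $p_2,\dots,p_n$ fixed --- that is not how $B^{n,k}_\mu$ is defined. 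In the paper's setup with $m=|\mu|=n+g-1$, one fixes $[C,p_{g+2},\dots,p_{n+g-1}]\in\mathcal{M}_{g,n-2}$ general and lets $p_1,\dots,p_{g+1}$ vary subject to the $k$-canonical constraint, giving a one-dimensional fibre; these fibres sweep out $\overline{\PPP}^k(\mu)$ tautologically, and their images under the generically finite map $\varphi$ therefore cover $D^{n,k}_\mu$. Once you fix the dimensions, the ``main obstacle'' you anticipate in step (2) evaporates: there is nothing to check about completions varying nicely, because the covering is immediate from the fibration structure on $\overline{\PPP}^k(\mu)$ itself.
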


\begin{proof}
The numerical classes of $B^{n,k}_{\mu}$ cover a Zariski dense subset of $D^{n,k}_\mu$. If the general curve $B^{n,k}_\mu$ is reducible the components must have the same class as otherwise, taking the closure of one of these components over all numerical classes of $B^{n,k}_\mu$ would contradict the irreducibility of $D^{n,k}_\mu$. Hence irreducible curves with class proportional to $B^{n,k}_\mu$ cover a Zariski dense subset of $D^{n,k}_\mu$.
\end{proof}

%By observing for certain signatures B forms a component of a specialisation of B_kappa,1,-1 we obtain the following theorem. Prop 5.4: B.D <0.
By observing that for some signatures $\mu$ the curve $B^{n,k}_{\mu}$  forms a component of a specialisation of $B^{n,k}_{\mu,-1,1}$ we obtain the negative intersection of the covering curve for the corresponding divisors $D^{n,k}_\mu$.

\begin{prop}\label{prop:intersection}
%\begin{equation*}
$B^{g+1,k}_{\underline{d},k^{g-1}}\cdot D^{g+1,k}_{\underline{d},k^{g-1}}<0$ 
%\end{equation*}
for $g\geq 2$ and $\underline{d}=(d_1,d_2,d_3,k^{g-2})$ with $d_1+d_2+d_3=k$ and $k\nmid d_i$ for some $i$.
\end{prop}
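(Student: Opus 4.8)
The plan is to exhibit the divisor's covering curve $B^{g+1,k}_{\underline{d},k^{g-1}}$ as a component of a limiting configuration of the boundary curve $B^{g+1,k}_{\underline{d},k^{g-1},1,-1}$ from Theorem~\ref{thm:boundary}, and to read off the intersection with $D^{g+1,k}_{\underline{d},k^{g-1}}$ from the complementary boundary component. Concretely, I would start with a general $[C,p_1,\dots,p_{g+1}]\in\mathcal{M}_{g,g+1}$ and degenerate by sending the extra points $p_{g+2}$ (carrying coefficient $1$) and $p_{g+3}$ (carrying coefficient $-1$) together onto a rational tail, exactly as in Example~\ref{Ex2}: attach to $C$ at a general point $x'$ a copy of $\PP^1$ carrying the unique $k$-differential with divisor $-2kx+p_{g+2}-p_{g+3}$. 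Since $\res^k_x(\omega_X)\neq 0$, the global $k$-residue condition forces the rational tail onto the lowest level, and the remaining condition on $C$ becomes precisely $\sum_{i=1}^{g+1}k_ip_i+\sum k_{i+g+1}q_i\sim kK_C$ with the twist along $x'$ contributing nothing to the linear equivalence on $C$ — i.e.\ $[C,p_1,\dots,p_{g+1}]$ lies on $B^{g+1,k}_{\underline{d},k^{g-1}}$. Thus a specialisation of $B^{g+1,k}_{\underline{d},k^{g-1},1,-1}$ breaks into (a copy of) $B^{g+1,k}_{\underline{d},k^{g-1}}$ on the main-moduli-space side plus a curve $E$ that is entirely contained in the boundary divisor $\Delta_{0:\{g+2,g+3\}}$ (as $x'$ varies the node moves along $C$).

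Next I would combine this with Theorem~\ref{thm:boundary}: since $B^{g+1,k}_{\underline{d},k^{g-1},1,-1}\cdot D^{g+1,k}_{\underline{d},k^{g-1}}=0$ and this class specialises to $B^{g+1,k}_{\underline{d},k^{g-1}}+E$ (possibly with multiplicities, which I would need to check are positive and which do not affect the sign), we get
\begin{equation*}
B^{g+1,k}_{\underline{d},k^{g-1}}\cdot D^{g+1,k}_{\underline{d},k^{g-1}} = -\,E\cdot D^{g+1,k}_{\underline{d},k^{g-1}}.
\end{equation*}
So it suffices to show $E\cdot D^{g+1,k}_{\underline{d},k^{g-1}}>0$, i.e.\ that the boundary curve $E$ meets the divisor positively. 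The curve $E$ parametrises $[C\cup_{x'=x}X,p_1,\dots,p_{g+3}]$ with $C,p_1,\dots,p_{g+1}$ fixed general, the differential on $C$ still satisfying the twisted $k$-canonical condition away from $x'$, and $x'$ moving; $E$ lies in $D^{g+1,k}_{\underline{d},k^{g-1}}$ at those finitely many points where the twisted $k$-differential on $C\cup X$ additionally becomes smoothable, which by the analysis of Example~\ref{Ex2} happens exactly when $x'$ is a point at which the residue condition $\res^k_{x'}$ of the differential on $C$ can be matched — a nonempty finite set once $k\nmid d_i$ for some $i$ (this divisibility hypothesis is what forces the differential on $C$ to genuinely not be a $k$-th power, so the relevant residue is generically nonzero and the count is positive). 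Counting these points via the cross-ratio computation on $\PP^1$, as in the proof of Lemma~\ref{intQg3}, gives a strictly positive number; the condition $k \nmid d_i$ for some $i$ guarantees the polynomial whose roots we count is nonconstant.

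The main obstacle I anticipate is the bookkeeping in the degeneration: verifying that $B^{g+1,k}_{\underline{d},k^{g-1},1,-1}$ really does specialise to $B^{g+1,k}_{\underline{d},k^{g-1}}+E$ with \emph{no other components} and with the correct (positive) multiplicities, and ruling out the competing level graph in Example~\ref{Ex2} (where the genus-$0$ tail sits at the same level as any further rational component) — i.e.\ showing that in this one-parameter family the only limiting stable curves are the ones supported on these two components. This requires a careful local analysis of the family of $k$-differentials near $x'=p_j$ and near $x'$ equal to a zero of the differential on $C$, using the plumbing/smoothing description of~\cite{BCGGM2}; the non-vanishing $\res^k_x(\omega_X)\neq0$ is the key input that collapses the possible level structures. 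Once the degeneration is pinned down, the sign of the intersection follows immediately from $E\cdot D^{g+1,k}_{\underline{d},k^{g-1}}>0$ together with $B^{g+1,k}_{\underline{d},k^{g-1},1,-1}\cdot D^{g+1,k}_{\underline{d},k^{g-1}}=0$.
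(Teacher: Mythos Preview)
Your overall strategy matches the paper's exactly: specialise the fixed data defining $B^{g+1,k}_{\underline{d},k^{g-1},1,-1}$ by placing the points carrying coefficients $1,-1$ on a rational tail $X$, decompose the resulting special fibre as $B^{g+1,k}_{\underline{d},k^{g-1}}$ plus a boundary piece, and combine with $B^{g+1,k}_{\underline{d},k^{g-1},1,-1}\cdot D^{g+1,k}_{\underline{d},k^{g-1}}=0$ from Theorem~\ref{thm:boundary}. The gap is in your identification of the boundary component $E$.

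First, indexing: the signature $(\underline{d},k^{g-1},1,-1)$ has length $2g+2$, and the fixed data for this curve is $[C,p_{g+2},\dots,p_{2g+2}]\in\mathcal{M}_{g,g+1}$ --- that is, $g-1$ points with coefficient $k$ together with the two points $p_{2g+1},p_{2g+2}$ carrying $1,-1$ --- while $p_1,\dots,p_{g+1}$ are the \emph{varying} points. Once $p_{2g+1},p_{2g+2}$ lie on the tail $X$ attached at a fixed general $x'\in C$, that node is part of the fixed data and does not move; your description of $E$ as ``$x'$ moving with $C,p_1,\dots,p_{g+1}$ fixed general'' therefore cannot be a component of the special fibre (and $\Delta_{0:\{g+2,g+3\}}$ is not even a divisor in $\overline{\mathcal{M}}_{g,g+1}$).

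The actual boundary component, $\delta(B^{g+1,k}_{\underline{d},k^{g-1},1,-1})$, arises when the \emph{varying} points $p_1,p_2,p_3$ bubble onto a \emph{second} rational tail $Y$, producing exactly the three-component curve $C\cup_{y'=y} Y\cup_{x'=x} X$ of Example~\ref{Ex2}. The twisted $k$-differential then forces $y+\sum_{i=4}^{2g}p_i\sim K_C$ on the main component (a holomorphic abelian differential, since all the relevant twists are multiples of $k$); with $p_{g+2},\dots,p_{2g}$ fixed general this determines $y,p_4,\dots,p_{g+1}$ up to finitely many choices, and the one free parameter is the cross-ratio of $(y,p_1,p_2,p_3)$ on $Y$. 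The level-graph analysis of Example~\ref{Ex2} shows every such configuration is smoothable, so this really is a component of the limit, and in $\overline{\mathcal{M}}_{g,g+1}$ it sits in $\delta_{0:\{1,2,3\}}$, not in anything involving the forgotten points. Its positive intersection with $D^{g+1,k}_{\underline{d},k^{g-1}}$ comes from Example~\ref{Ex1}: the finitely many positions of $p_1,p_2,p_3$ on $Y$ for which $\res^k_y(\omega_Y)=0$. (When some $d_i=k$ there are additional boundary components, handled similarly.) Your final paragraph correctly anticipates that pinning down the limit components is the crux, but the component you describe is not the one that appears.
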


\begin{proof}
Our proof generalises the proof of Proposition 5.4 in~\cite{Mullane3} via the global $k$-residue condition. Theorem~\ref{thm:boundary} implies
\begin{equation*}
B^{g+1,k}_{\underline{d},k^{g-1},1,-1}\cdot D^{g+1,k}_{\underline{d},k^{g-1}}=0
\end{equation*}
Curves numerically equivalent to $B^{g+1,k}_{\underline{d},k^{g-1},1,-1}$ are constructed as one-dimensional fibres of the morphism forgetting the first $g+1$ points
$$\overline{\PPP}^k(\underline{d},k^{g-1},1,-1)\longrightarrow\overline{\mathcal{M}}_{g,g+1}.$$
Consider the fibre above the nodal pointed curve with $p_{g+2},...,p_{2g}$ general points in $C$, a general genus $g$ curve and $p_{2g+1}$ and $p_{2g+2}$ lying on a rational tail $X$ above a general point $x$ on the curve $C$. We describe the fibre by applying the global $k$-residue condition to find the position of $p_1,\dots,p_{g+1}$ that give smoothable twisted $k$-canonical divisors of the required type.

\begin{figure}[htbp]
\begin{center}
\begin{overpic}[width=1\textwidth]{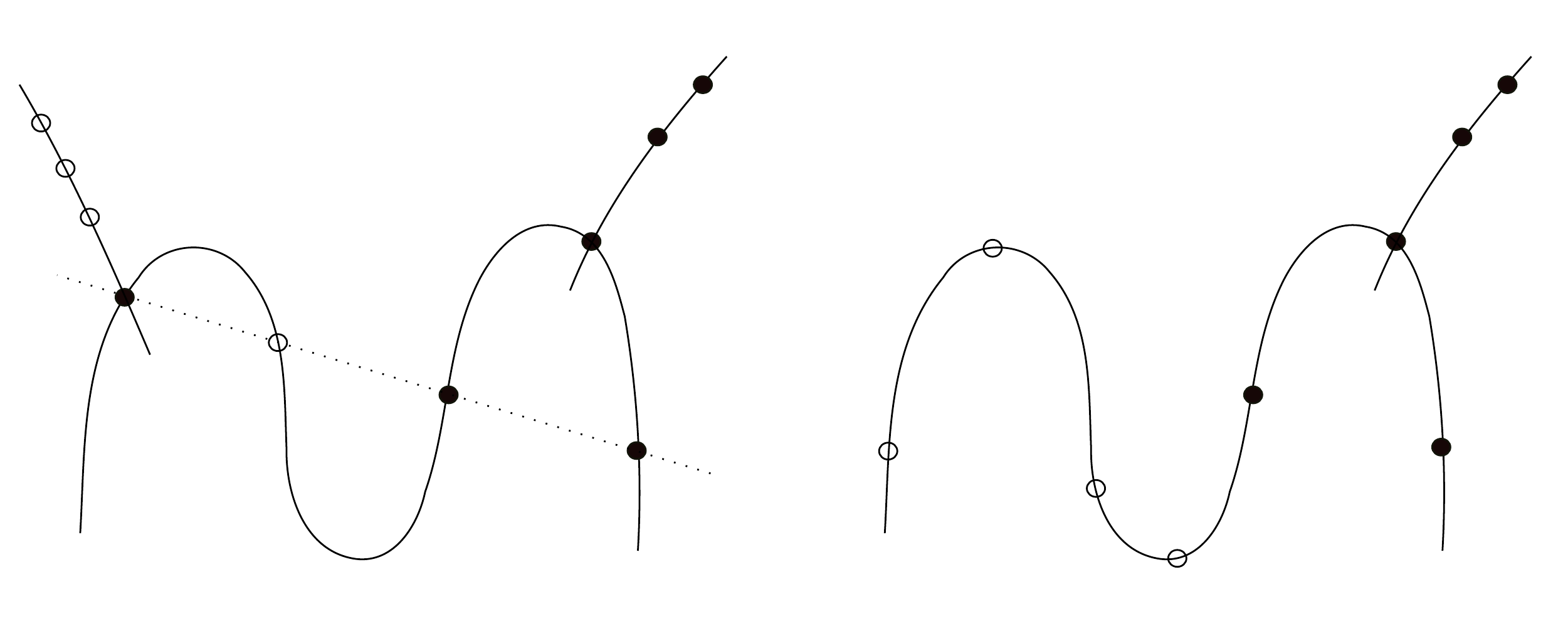}
\put (2,8){$C$}
\put (10,30){$Y\cong \PP^1$}
\put (4,33){$p_1$}
\put (5.5,30){$p_2$}
\put (7,26.5){$p_3$}
\put (5,20.5){$y$}
\put (15,17){$p_4$}
\put (25.5,14){$p_5$}
\put (38,10){$p_6$}
\put (41,27){$X\cong \PP^1$}
\put (39,32){$p_7$}
\put (41.5,35){$p_8$}
\put (37,27){$x$}
\put (13,0){$y+p_4+p_5+p_6\sim K_C$}

\put (53,8){$C$}
\put (58,11.5){$p_1$}
\put (63,27){$p_2$}
\put (66.5,9){$p_3$}
\put (74.5,7){$p_4$}
\put (76.5,14){$p_5$}
\put (89,10){$p_6$}
\put (92,27){$X\cong \PP^1$}
\put (90,32){$p_7$}
\put (92.5,35){$p_8$}
\put (88,27){$x$}
\put (55,0){$d_1p_1+d_2p_2+d_3p_3+kp_4+kp_5+kp_6\sim kK_C$}

\put (13,-7){\footnotesize{Figure 4: Two components of $B^{4,k}_{d_1,d_2,d_3,k^3,1,-1}$ in $\overline{\mathcal{M}}_{3,4}$ when $p_7$ and $p_8$ sit on a $\PP^1$ tail.}}
\end{overpic}
\vspace{0.6cm}
\end{center}
\end{figure}

Fix $[C\cup_{x=x'}X,p_{g+2},\dots,p_{2g+2}]\in \overline{\mathcal{M}}_{g,g+1}$ as described above and consider the associated curve $B^{g+1,k}_{\underline{d},k^{g-1},1,-1}$. The smoothable twisted $k$-canonical divisors of interest are of two different types. Denote by $\delta(B^{g+1,k}_{\underline{d},k^{g-1},1,-1})$ the curve contained in the boundary of $\overline{\mathcal{M}}_{g,g+1}$. Consider $d_i\ne k$ for $i=1,2,3$. In this case $\delta(B^{g+1,k}_{\underline{d},k^{g-1},1,-1})$  is the curve created by the points $p_1,p_2,p_3$ moving freely on a $\PP^1$-tail we denote $Y$ attached to the curve $C$ at a point $y$. The resulting twisted $k$-canonical divisor is of the form
\begin{eqnarray*}
y+\sum_{i=4}^{2g}p_i &\sim& K_C\\
p_{2g+1}-p_{2g+2}-2kx&\sim& kK_X\\
d_1p_1+d_2p_2+d_3p_3-3ky&\sim& kK_Y.
\end{eqnarray*}
The pointed nodal curve in the left of Figure 4 shows this situation for $g=3$ which is presented in detail in Example~\ref{Ex2}. On a general curve $C$ with $p_{g+2},\dots,p_{2g}$ fixed we have $h^0(K_C-p_{g+2}-...-p_{2g})=1$ providing $(g-1)!$ solutions for $y,p_4,...,p_{g+1}$. It remains to check the global $k$-residue condition to find which such twisted $k$-canonical divisors are smoothable. Observe for any $k$-differential $\omega_X$ on $X$ with $(\omega_X)\sim p_{2g+1}-p_{2g+2}-2kx$ we have $\res^k_x(\omega_X)\ne 0$. Hence as discussed in Example~\ref{Ex2}, Figure 5 gives the two possible level graphs to provide smoothable twisted $k$-canonical divisors of this type. The global $k$-residue condition from Graph A is
\begin{equation*}
\res^k_x(\omega_X)+(-1)^k\res^k_y(\omega_Y)=0,
\end{equation*}
which can be obtained for any configuration of points on rational curve $Y$ with $\res_{y}^k(\omega_Y)\ne 0$ by scaling the differential. Graph B gives the condition
\begin{equation*}
\res^k_y(\omega_Y)=0.
\end{equation*}
Hence all configurations of $y,p_1,p_2,p_3$ on the rational tail $Y$ are included. 

In the case that some $d_i$, say $d_3=k$, the curve  $\delta(B^{g+1,k}_{\underline{d},k^{g-1},1,-1})$ will include more components. There will be $g-2$ components specified by $i=3,...,g+1$ where  $p_1,p_2$ and $p_i$ move freely on a $\PP^1$-tail attached to $C$ at a point $y$ such that $y+p_3+...+p_{i-1}+p_{i+1}+...+p_{2g}\sim K_C$. Further, there is a component where $p_3,...p_{g+1}$ are fixed such that $\sum_{i=3}^{2g}p_i\sim K_C$ and $p_1$ and $p_2$ sit on a rational tail attached to $C$ at a point $y$ that varies freely in $C$. 
\begin{figure}[htbp]
\begin{center}
\vspace{1cm}
\begin{overpic}[width=0.7\textwidth]{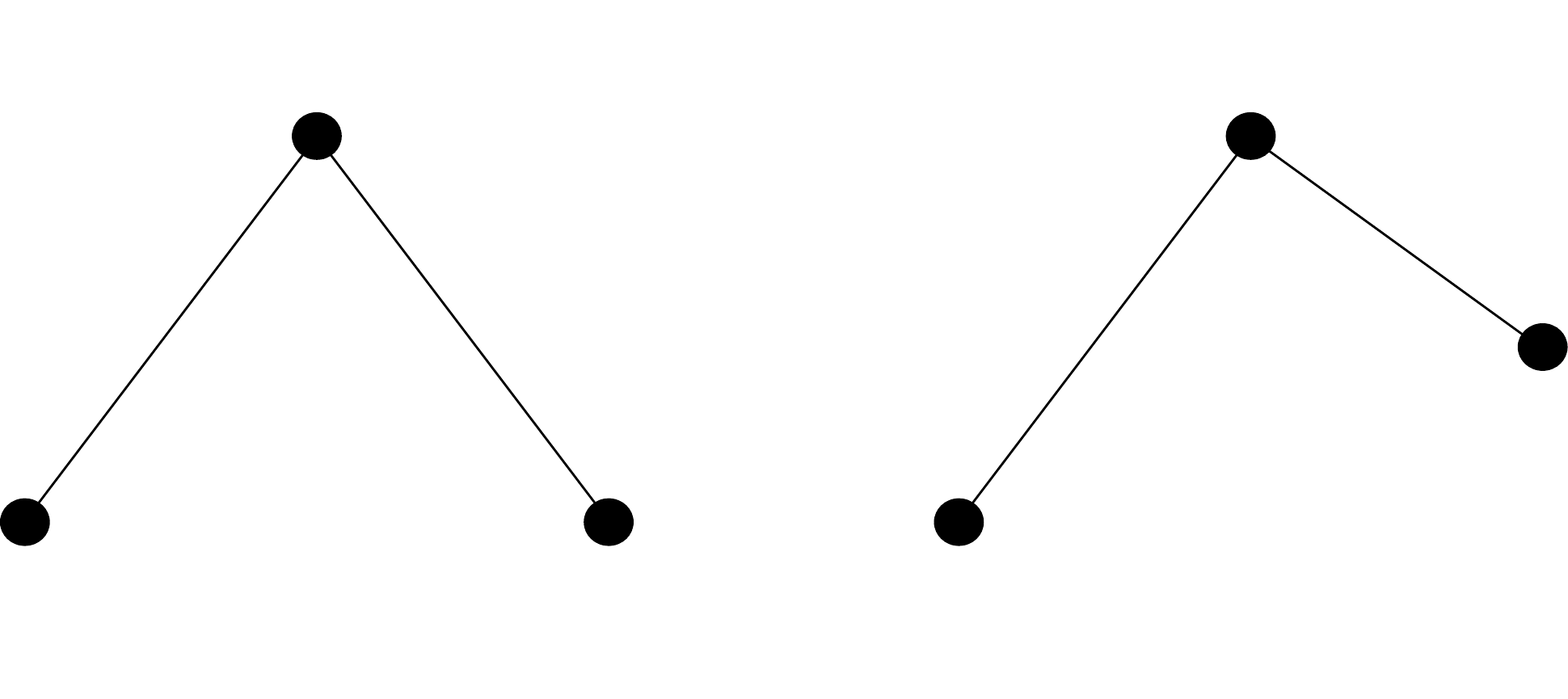}

\put (19,40){$C$}
\put (0,6){$X$}
\put (38,6){$Y$}
\put (13,47){Graph A}

\put (78.5,40){$C$}
\put (59,6){$X$}
\put (97,17){$Y$}
\put (72,47){Graph B}

\put (11,-0){\footnotesize{Figure 8: Level graphs giving the global $k$-residue condition.}}
\end{overpic}
\end{center}
\end{figure}

The other candidate for smoothable twisted $k$-canonical divisors in this special fibre are twisted $k$-canonical divisors of the form
\begin{eqnarray*}
d_1p_1+d_2p_2+d_3p_3+\sum_{i=4}^{2g}kp_i &\sim& kK_C\\
p_{2g+1}-p_{2g+2}-2kx&\sim& kK_X,
\end{eqnarray*}
where the points $p_1,...,p_{g+1}$ vary in a one dimensional family in the curve $C$. The pointed curve in the right of Figure 4  shows this situation for $g=3$. As there are poles on both components the global $k$-residue condition is empty and all solutions of this type are smoothable. Hence this component is simply the curve $B^{g+1,k}_{{\underline{d},k^{g-1}}}$.

Hence
\begin{equation*}
B^{g+1,k}_{\underline{d},k^{g-1},1,-1}\sim \delta(B^{g+1,k}_{\underline{d},k^{g-1},1,-1})+B^{g+1,k}_{\underline{d},k^{g-1}}.
\end{equation*}
As detailed in Example~\ref{Ex1},  $\delta(B^{g+1,k}_{\underline{d},k^{g-1},1,-1})$ intersects $D^{g+1,k}_{\underline{d},k^{g-1}}$ at the non-empty finite set of points where the $k$-residue at $y$ is zero. Hence
\begin{equation*}
 \delta(B^{g+1,k}_{\underline{d},k^{g-1},1,-1})\cdot D^{g+1,k}_{\underline{d},k^{g-1}}>0,
\end{equation*} 
but by Theorem~\ref{thm:boundary}
\begin{equation*}
B^{n,k}_{\mu,1,-1}\cdot D^{n,k}_\mu=0
\end{equation*}
and hence
\begin{equation*}
 B^{g+1,k}_{\underline{d},k^{g-1}}\cdot D^{g+1,k}_{\underline{d},k^{g-1}}<0.
\end{equation*} 
\end{proof}

\extremal*

\begin{proof}
Proposition~\ref{prop:covering} and Proposition~\ref{prop:intersection} give a covering curve for the divisors with negative intersection. Hence by Lemma~\ref{lemma:covering} the divisors are rigid and extremal.
\end{proof}

\begin{rem}
Pullbacks of the above rigid and extremal divisors under forgetful morphisms are also rigid and extremal. Fix $n>g+1$ and let $\varphi:\Mgnb\longrightarrow \overline{\mathcal{M}}_{g,g+1}$ forget all but the first $g+1$ points we have $\varphi^*D^{g+1,k}_{\underline{d},k^{g-1}}$ is also rigid and extremal. Observe that $\varphi^*D^{g+1,k}_{\underline{d},k^{g-1}}=D^{n,k}_{\underline{d},0^{n-g-1},k^{g-1}}$ is irreducible as the pullback of an irreducible divisor intersecting the interior of $\Mgnb$. Further, $B^{n,k}_{\underline{d},0^{n-g-1},k^{g-1}}$ provides a covering curve for $\varphi^*D^{g+1,k}_{\underline{d},k^{g-1}}$ with 
\begin{equation*}
\varphi_*B^{n,k}_{\underline{d},0^{n-g-1},k^{g-1}}=B^{g+1,k}_{\underline{d},k^{g-1}}.
\end{equation*}
Hence by the projection formula
\begin{equation*}
B^{n,k}_{\underline{d},0^{n-g-1},k^{g-1}}\cdot \varphi^*D^{g+1,k}_{\underline{d},k^{g-1}}=\varphi_*B^{n,k}_{\underline{d},0^{n-g-1},k^{g-1}}\cdot D^{g+1,k}_{\underline{d},k^{g-1}}=B^{g+1,k}_{\underline{d},k^{g-1}}\cdot D^{g+1,k}_{\underline{d},k^{g-1}}<0.
\end{equation*}
%Hence $\varphi^*D^{g+1,k}_{\underline{d},1^{g-1}}$ are rigid and extremal.
\end{rem}

%Lemma: k=2 works.
Though the classification of the connected components of the meromorphic $k$-strata of differentials is still in progress, Chen and Gendron~\cite{ChenGendron} have completed the $k=2$ case of quadratic differentials for non-simple poles, which with Lanneau's classification of the finite area cases~\cite{Lanneau} (See Corollary~\ref{irredQ}) provides the following result.  %Insert more.

\quaddiv*

%%%%%%%%%%%%%%%%%%%%%%%%%%%%%
%%%%%%%%%%%%%%%%%%%%%%%%%%%%%
\section{Rigidity and extremality of higher codimension cycles}%~3 pages

In this section we use the property of rigidity inductively on the codimension under the forgetful morphism that forgets marked points. The general strategy, first used by Chen and Tarasca~\cite{ChenTarasca} to show that marking Weierstrass points on genus $g=2$ curves gave rigid and extremal cycles in $\Mbar{2}{n}$ for $n=2,\dots,6$, is to assume there exists a non-trivial effective decomposition. By pushing this decomposition forward under the forgetful morphism we use the assumed rigidity in lower codimension to deduce that the assumed decomposition must include the original cycle hence contradicting the assumed non-triviality. Some complications arise in low genus $g=2,3$ when there are multiple candidates for the cycle we find in the decomposition. We deal with these cases by ruling out the extraneous candidates by showing their inclusion would violate our effectivity assumption through the use of moving curves constructed in Proposition~\ref{moving}.

%Give full Theorem
\fullextremal*

\begin{proof}
The cases for $k=1$ appear in~\cite{Mullane3} in the case of divisors, that is, $j=g-1$ and \cite{Mullane4} for $j=0,\dots,g-2$. The divisorial case for $k\geq 2$ is Theorem~\ref{thm:extremal}. The remaining cases follow below as the content of Propositions \ref{mero1}, \ref{mero2} and \ref{mero3}. 
\end{proof}

Again, while the classification of the connected components of the strata of $k$-differentials is still in progress, the completed case of quadratic differentials given in Corollary~\ref{irredQ} yields the following.

% State lemma - k=2 quad differentials, the irreducibility holds. STATE
\Quadfullextremal

We begin by proving the most general case where the complications mentioned above do not occur.

%Prop 5.1: The cycles and their push forwards are rigid and extremal if the strata are irreducible g>=4 and g=3 if some d_i=k. ~1/2 page
\begin{prop}\label{mero1}
For $g\geq 3$, $k\geq 2$ and $j=0,\dots,g-1$ if $\PPP^k(d_1,d_2,d_3,k^{2g-3})$ is irreducible the cycle $[\varphi_j{}_*\overline{\PPP}^k(d_1,d_2,d_3,k^{2g-3})]$ is rigid and extremal in $\mbox{Eff}^{\hspace{0.1cm}g-j}(\overline{\mathcal{M}}_{g,2g-j})$, where $\varphi_j:\overline{\mathcal{M}}_{g,2g}\longrightarrow\overline{\mathcal{M}}_{g,2g-j}$ forgets the last $j$ points with $d_1+d_2+d_3=k$ and some $d_i=k$ if $g=3$.
\end{prop}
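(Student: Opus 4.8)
The plan is to run the inductive argument on codimension sketched in the introduction, with base case the divisor result (Theorem~\ref{thm:extremal}), and with the forgetful morphism $\varphi_j:\Mbar{g}{2g}\to\Mbar{g}{2g-j}$ decomposed as a composition of one-point forgetful maps $\Mbar{g}{2g-i}\to\Mbar{g}{2g-i-1}$. Write $V_j:=\varphi_j{}_*\overline{\PPP}^k(d_1,d_2,d_3,k^{2g-3})$, an irreducible subvariety of codimension $g-j$ (irreducibility of the stratum is assumed, and pushforward under a forgetful map preserves irreducibility; the dimension count $2g+n-2$ for the meromorphic stratum gives codimension $g$ in $\Mbar{g}{2g}$, and each of the first $j$ forgetful steps drops codimension by one since the stratum dominates none of the boundary divisors it meets). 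Assume toward a contradiction a nontrivial effective decomposition $[V_j]=\sum c_i[W_i]$ with $c_i>0$, $W_i$ irreducible of the same codimension, none equal to $V_j$. Push forward by the next one-point forgetful morphism $\varphi:\Mbar{g}{2g-j}\to\Mbar{g}{2g-j-1}$. The key structural input is that $\varphi_*V_j = V_{j+1}$ (one more point forgotten) and that this pushforward is generically finite of degree one onto its image when $j<g-1$ — i.e. $\varphi$ restricted to $V_j$ is birational onto $V_{j+1}$ — which holds because a general point of $V_{j+1}$ determines the genus $g$ curve and the $2g-j-1$ marked points, and the position of the last forgotten point (a zero of the $k$-differential of prescribed order $k$) varies in a positive-dimensional family, so the fibre of $\varphi|_{V_j}$ over a general point is positive-dimensional precisely when we are pushing \emph{into} the stratum, but the map $V_j\to V_{j+1}$ is the identity on moduli after forgetting, hence degree one.

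Concretely, applying $\varphi_*$ to the decomposition gives $[V_{j+1}] = \sum c_i [\varphi_* W_i]$, where each $\varphi_*W_i$ is either an effective cycle of the same codimension $g-j$ in $\Mbar{g}{2g-j-1}$ (if $W_i$ does not dominate its image along the forgotten point) or zero-dimensional-fibre image of codimension $g-j-1$... the relevant dichotomy is: $\varphi_*W_i$ has codimension $g-(j+1)$ iff $\varphi|_{W_i}$ is generically finite, otherwise $\varphi_*W_i=0$ as a cycle of that codimension. Discarding the terms that die, we obtain an effective decomposition of $[V_{j+1}]$ in $\operatorname{Eff}^{g-j-1}(\Mbar{g}{2g-j-1})$. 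By the inductive hypothesis $V_{j+1}$ is rigid and extremal, so every surviving $\varphi_*W_i$ with nonzero image must be proportional to $[V_{j+1}]$, i.e. equals $V_{j+1}$ as a variety. Pulling back: if $\varphi(W_i)=V_{j+1}$ and $\varphi|_{W_i}$ is generically finite, then $W_i\subseteq \varphi^{-1}(V_{j+1})$, and one analyses the irreducible components of $\varphi^{-1}(V_{j+1})$. Its "horizontal" component is exactly $V_j$ (the point returns as a prescribed zero), and the remaining components are boundary divisors over $V_{j+1}$ or loci where the returned point collides with an existing marked point — these have the \emph{wrong codimension} or are not dominated by the $W_i$ because the $W_i$ were assumed $\ne V_j$. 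This forces some $W_i = V_j$, the desired contradiction. Extremality then follows since rigidity plus "no nontrivial effective decomposition into cycles of the same codimension not containing $V$" is the definition used here (cf. the argument of Chen–Tarasca~\cite{ChenTarasca}).

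The main obstacle — and the step I expect to require real care — is controlling the components of $\varphi^{-1}(V_{j+1})$ and ruling out that some $W_i$ could be a component of it \emph{other} than $V_j$ which nonetheless has codimension $g-j$ and pushes onto $V_{j+1}$. A priori the preimage could contain a boundary divisor $\Delta_{0:\{p,p'\}}$-type component (the forgotten point bubbling off with another point) that also maps onto $V_{j+1}$; one must check via the global $k$-residue condition and the explicit description of $\overline{\PPP}^k$ near such boundary strata (Examples~\ref{Ex1}, \ref{Ex2}) that such a component is either not effective-decomposition-relevant or is itself accounted for, or has strictly smaller image. The hypothesis $g\ge 3$ (and "$d_i=k$ for some $i$ if $g=3$") is presumably exactly what guarantees enough room for this clean dichotomy; the genus $2$ and the remaining genus $3$ sub-cases, where extra candidate components genuinely appear, are deferred to Propositions~\ref{mero2} and~\ref{mero3}, where one eliminates them by pushing an implied effective cycle down to a divisor and pairing against the moving curve of Proposition~\ref{moving} to reach a numerical contradiction.
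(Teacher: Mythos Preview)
Your inductive framework is the right one, but there is a genuine gap: you use only a \emph{single} forgetful morphism $\varphi:\Mbar{g}{2g-j}\to\Mbar{g}{2g-j-1}$, and then you are left with exactly the obstacle you name --- controlling the codimension-$(g-j)$ components of $\varphi^{-1}(V_{j+1})$ other than $V_j$. You do not resolve this, and the suggestion that it should follow from residue conditions on boundary strata is not how the paper proceeds (nor is it clear how to make that work cleanly).

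The paper's key idea, which your proposal is missing, is to exploit the symmetry of the signature and push forward by \emph{every} one-point forgetful map $\pi_m$ for $m$ ranging over the marked points carrying multiplicity $k$ (i.e.\ $m=4,\dots,2g-j$ for $g\geq4$, and $m=3,\dots,6-j$ for $g=3$ after arranging $d_3=k$). For each such $m$ one has $(\pi_m)_*[V_j]=[V_{j+1}]$, so from the assumed decomposition $[V_j]=\sum c_i[W_i]$ one finds some $W_i$ with $(\pi_m)_*[W_i]\ne 0$; by rigidity and extremality of $[V_{j+1}]$ this forces $\pi_m(W_i)=V_{j+1}$, so a general point of $W_i$ lies over a general (hence smooth, distinct-points) point of $V_{j+1}$. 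This in turn makes $(\pi_{m'})_*[W_i]\ne 0$ for \emph{every} other $m'$, and the same rigidity argument shows $W_i\subset\pi_{m'}^{-1}(V_{j+1})$ for all $m'$. Now varying $m'$ shows that at a general point of $W_i$ the marked points $p_m$ (for $m$ in the symmetric range) are pairwise distinct, which pins $W_i$ down to $V_j$ itself --- no residue analysis needed. The hypothesis ``some $d_i=k$ if $g=3$'' is there precisely to guarantee at least two such maps $\pi_m$ are available; the genuinely new components that arise when this fails are what Propositions~\ref{mero2} and~\ref{mero3} handle by the moving-curve argument.

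A minor point: your aside that $V_j\to V_{j+1}$ is ``degree one'' is muddled (the fibre over a general point of $V_{j+1}$ consists of the choices for the forgotten multiplicity-$k$ point), though this does not affect the argument since one only needs $(\pi_m)_*[V_j]$ to be a positive multiple of $[V_{j+1}]$.
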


\begin{proof}
Proceed by induction. Assume $[(\varphi_{j+1})_*\overline{\PPP}^k(d_1,d_2,d_3,k^{2g-3})]$ is rigid and extremal. 
If $[(\varphi_j)_*\overline{\PPP}^k(d_1,d_2,d_3,k^{2g-3})]$ is not extremal then it can be expressed as
\begin{equation*}
[(\varphi_j)_*\overline{\PPP}^k(d_1,d_2,d_3,k^{2g-3})]=\sum c_i [V_i]
\end{equation*}
for $c_i>0$, $V_i$ irreducible with class not proportional to $[(\varphi_j)_*\overline{\PPP}(d_1,d_2,d_3,k^{2g-3})]$. Let $\pi_m:\overline{\mathcal{M}}_{g,2g-j}\longrightarrow \overline{\mathcal{M}}_{g,2g-j-1}$ forget the $m$th point. Pushing forward yields 
\begin{equation*}
(\pi_m)_*[(\varphi_j)_*\overline{\PPP}^k(d_1,d_2,d_3,k^{2g-3})]= [(\varphi_{j+1})_*\overline{\PPP}^k(d_1,d_2,d_3,k^{2g-3})]    =\sum c_i (\pi_k)_*[V_i]
\end{equation*}
for each $m=4,...,2g-j$ for $g\geq 3$. Without loss of generality, assume that $d_3=k$ when $g=3$. Then the equation will hold in the $g=3$ case for $m=3,\dots, 6-j$. 

Now fix $m$, as the LHS is non-zero there is at least one $V_i$ such that $(\pi_m)_*[V_i]$ is non-zero and as the LHS is extremal, $(\pi_m)_*[V_i]$ is necessarily a positive multiple of the rigid cycle $[(\varphi_{j+1})_*\overline{\PPP}^k(d_1,d_2,d_3,k^{2g-3})]$. Hence $V_i$ must be supported on 
\begin{equation*}
(\pi_m)^{-1}(\varphi_{j+1}{}_*\overline{\PPP}^k(d_1,d_2,d_3,k^{2g-3}))
\end{equation*}
and further, $(\pi_{m'})_*[V_i]$ is non-zero for any other $m'=4,...,2g-j$ for $g\geq 4$ or $m'=3,\dots 6-j$ for $g=3$. %edited up to here
This argument for each $m'$ yields $V_i$ is supported in the intersection of $(\pi_m)^{-1}(\varphi_{j+1}{}_*\overline{\PPP}^k(d_1,d_2,d_3,k^{2g-3}) )$ for $m=4,...,2g-j$ for $g\geq 4$ or $m=3,\dots 6-j$ for $g=3$. A general element of $V_i$ is hence of the form $[C,p_1,...,p_{2g-j}]\in \mathcal{M}_{g,2g-j}$ with
\begin{equation*}
d_1p_1+d_2p_2+d_3p_3+\sum_{i=4,i\ne m}^{2g-j}kp_i +\sum_{i=1}^{j+1}kq_i \sim kK_C
\end{equation*}
for some $q_i$ with $m=4,...,2g-j$. But this implies that for $g\geq4$ the $p_i$ for $i=4,...,2g-j$ are all pairwise distinct and hence distinct. Similarly for $g=3$ the $p_i$ for $i=3,\dots,6-j$ are all distinct. 

Hence $V_i$ is supported on $(\varphi_{j})_*\overline{\PPP}^k(d_1,d_2,d_3,k^{2g-3})$ and $[V_i]$ is a positive multiple of $[(\varphi_j)_*\overline{\PPP}^k(d_1,d_2,d_3,k^{2g-3})]$ providing a contradiction. 

%Hence if $(\varphi_{j+1})_*\overline{\PPP}(d_1,d_2,d_3,k^{2g-3})$ is rigid and extremal then $(\varphi_{j})_*\overline{\PPP}(d_1,d_2,d_3,k^{2g-3})$ is rigid and extremal. 
The base case for the inductive argument is the divisorial case $j=g-1$ presented in Theorem~\ref{thm:extremal}.
\end{proof}

To extend Proposition~\ref{mero1} to all cases of Theorem~\ref{fullextremal} we will need to rule out some new candidates for the cycle obtained by the above process. We will do this via intersection theory, here we provide two lemmas giving explicit values of intersection numbers we'll require.

\begin{lem}\label{intg2}
For $k\geq2$, $\mu=(-h,k^2,h)$ with $k\nmid h$, in $\overline{\mathcal{M}}_{2,3}$
\begin{equation*}
B^{3,k}_{\mu,1,-1}\cdot\delta_{0:\{2,3\}}=8k^2h^2.
\end{equation*}
\end{lem}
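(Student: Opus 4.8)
The plan is to realize the curve $B^{3,k}_{\mu,1,-1}$ as a one-parameter family of stable pointed curves in $\overline{\mathcal{M}}_{2,5}$ (via the fibration of $\overline{\PPP}^k(\mu,1,-1)$ over $\overline{\mathcal{M}}_{2,3}$), push it to $\overline{\mathcal{M}}_{2,3}$, and then count the points of this family that land in the boundary divisor $\Delta_{0:\{2,3\}}$, i.e.\ the points where $p_2$ and $p_3$ collide. Concretely, fix a general $[C,p_1,p_4,p_5]\in\mathcal{M}_{2,3}$ (these being the points not forgotten, plus the extra pair carrying the $(1,-1)$ part of the signature), so that a general member of the curve records the positions of $p_2,p_3$ (and the two extra points $q_1,q_2$) subject to
\begin{equation*}
-h\,p_1 + k p_2 + k p_3 + h\,p_4 \;+\; q_5 - q_6 \;\sim\; k K_C,
\end{equation*}
after the appropriate relabelling. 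Intersecting with $\delta_{0:\{2,3\}}$ amounts to counting, with multiplicity, the fibre specializations where $p_2=p_3$.

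The key steps, in order. First, set up the incidence correspondence explicitly: on a fixed general genus-$2$ curve $C$ with the non-moving points general, the locus of $(p_2,p_3)$ (together with the auxiliary points) satisfying the linear equivalence above is described by a map to a Picard variety, and its degree is computed by the Picard variety method of [Mullane1, §2.5] exactly as in the proofs of Lemmas~\ref{intQg1}--\ref{intQg3} and Proposition~\ref{moving}: the map $C^g\to\Pic^d(C)$, $(p_i)\mapsto \mathcal{O}_C(\sum d_i p_i)$, has degree $\prod d_i^2\cdot g!$. Second, impose the node condition $p_2=p_3$: this collapses the two marked points to a single point $p$ of multiplicity $k+k=2k$ on $C$, but since $2k\equiv 0\pmod k$ and this would force a rational bridge, one must be careful about the stable limit — the collision produces a $\mathbb{P}^1$-bubble carrying $p_2,p_3$ attached to $C$, and on that bubble one has a $k$-differential of signature $(d_1',d_2',-k-k)$-type with the node of order $-2k$; the global $k$-residue condition on this bubble is automatically satisfied (there is a pole of order $>k$ somewhere, or the residue can be scaled), so all such configurations are smoothable. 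Third, count: the genus-$2$ side contributes a factor $k^2\cdot k^2 = k^4$? — more precisely, one re-runs the degree count with the collided point of weight $2k$ contributing $(2k)^2=4k^2$ and the $q_5,q_6$ pair contributing $1$ each, against the Picard variety of the genus $2$ curve (dimension $2$, so a factor $2!=2$), giving something of the shape $4k^2\cdot(\text{weights of }p_1,p_4)\cdot 2 = 8k^2h^2$ once the weights $\pm h$ on $p_1,p_4$ are inserted as $h^2$. Fourth, verify there are no extraneous boundary contributions (e.g.\ from $C$ itself degenerating) by genericity of the fixed data, and check the local intersection multiplicity at each such point is $1$ — this follows because the map to the Picard variety is unramified at a general point.

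I expect the main obstacle to be the \emph{bookkeeping of the stable limit and its multiplicity} when $p_2$ and $p_3$ collide: one must correctly identify which twisted $k$-differential (and hence which level graph) governs the limiting configuration, confirm via the global $k$-residue condition that it lies in the main component $\overline{\PPP}^k$, and argue that the family $B^{3,k}_{\mu,1,-1}$ meets $\delta_{0:\{2,3\}}$ transversally there — or, if not transversally, compute the correct multiplicity. This is the analogue of the residue-polynomial computation in the proof of Lemma~\ref{intQg3} (the cross-ratio computation producing a degree-$i$ polynomial in $b$), and here the hypothesis $k\nmid h$ is presumably what guarantees the relevant $k$-residue is nonzero so that no extra vanishing-residue constraint cuts down the count. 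Once the multiplicity is pinned to $1$ and the degree count is assembled, the formula $8k^2h^2$ should drop out; I would double-check the constant by cross-referencing with the analogous divisorial intersection numbers appearing in Proposition~\ref{prop:intersection} and the $k=1$ computations in [Mullane3].
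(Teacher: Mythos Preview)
Your overall strategy --- impose $p_2=p_3$ and count solutions via the Picard variety method --- is exactly the paper's approach. But you have misidentified which points are fixed and which vary, and this makes your degree count accidental rather than correct.

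By the definition of $B^{n,k}_\nu$, one fixes $[C,p_{g+2},\dots,p_m]$ general; here $g=2$, $m=6$, so the fixed data is $[C,p_4,p_5,p_6]$ (carrying weights $h,1,-1$), and the \emph{varying} points along the curve are $p_1,p_2,p_3$ (weights $-h,k,k$). You instead fix $p_1$ and invoke auxiliary moving points $q_1,q_2$ that do not exist in this signature. With your setup there would be only one genuine moving point on $C$ after the collision, and the factor $h^2$ could never appear: the degree formula $\prod d_i^2\cdot g!$ involves only the weights of \emph{varying} points, never those of fixed ones. The correct map is
\[
f_{h,-2k}:C\times C\longrightarrow \Pic^{h-2k}(C),\qquad (p_1,p_2)\longmapsto \mathcal{O}_C(hp_1-2kp_2),
\]
whose fibre over the point determined by $p_4,p_5,p_6$ has $h^2\cdot(2k)^2\cdot 2!=8k^2h^2$ points. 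The paper further notes (citing the ramification analysis of this map in genus two) that for general fixed data none of these solutions has $p_1=p_2$ or $p_i$ colliding with a fixed point, so each counts once.

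Two smaller remarks. Your concern about the global $k$-residue condition on the bubble is unnecessary: after the collision the main component $C$ sits on the top level and carries the marked pole $p_6$ of order $-1$, so criterion~(a) applies and smoothability is automatic --- no residue computation is needed. And the hypothesis $k\nmid h$ is not about residues; it forces $h\ne 0$, without which the map $f_{h,-2k}$ would fail to be generically finite and the count would collapse.
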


\begin{proof}
We need to enumerate the points $p_1$ and $p_2$ such that
\begin{equation*}
-hp_1+2kp_2+hq_1+q_2-q_3\sim kK_C
\end{equation*}
with $p_1\ne p_2$, $p_i\ne q_j$ and any limits that may occur with these points colliding that will satisfy the global $k$-residue condition.

Consider the map
\begin{eqnarray*}
\begin{array}{cccccc}
f_{h,-2k}:&C\times C&\longrightarrow &\Pic^{h-2k}(C)\\
&(p_1,p_2)&\longmapsto&\OO_C(hp_1-2kp_2).
\end{array}
\end{eqnarray*}
discussed in~[\cite{Mullane4}, \S 2.5]. Analysing the fibre of this map above $hq_1-q_2+q_3-kK_C\in \Pic^{h-2k}(C)$ will provide us with the solutions of interest. By~[\cite{Mullane4}, Proposition 2.4] for $h\geq 1$ and $h\ne 2k$ this map is finite of degree $8k^2h^2$, simply ramified along the diagonal $\Delta$ and the locus of pairs of points that are conjugate under the hyperelliptic involution denoted $I$. For $h=2k$ this map is generically finite of degree $8k^2h^2=32k^2$, contracts $\Delta$ and is simply ramified along $I$. Further, for general $q_i$ the fibre will contain no solutions where $p_1$ and $p_2$ coincide with each other or any of the $q_i$. 
\end{proof}

\begin{lem}\label{intg3}
For $k\geq 2$, $\mu=(d_2,d_3,k^3,d_1)$ with $k\nmid d_1$ and $d_1+d_2+d_3=k$, in $\overline{\mathcal{M}}_{3,4}$
\begin{equation*}
B^{4,k}_{\mu,1,-1}\cdot \delta_{0:\{3,4\}}=24k^2d_2^2d_3^2.
\end{equation*}
\end{lem}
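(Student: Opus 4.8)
The plan is to carry out the genus-$3$ analogue of the proof of Lemma~\ref{intg2}. Fix a general $[C,p_5,p_6,p_7,p_8]\in\mathcal{M}_{3,4}$ and let $B=B^{4,k}_{\mu,1,-1}$ be the resulting one-dimensional family of $[C,p_1,p_2,p_3,p_4]\in\Mbar{3}{4}$ cut out by
\begin{equation*}
d_2p_1+d_3p_2+kp_3+kp_4+kp_5+d_1p_6+p_7-p_8\sim kK_C .
\end{equation*}
Since $C$ is a fixed smooth curve, $B$ meets $\delta_{0:\{3,4\}}$ exactly at the points where $p_3$ and $p_4$ come together, sprouting a $\PP^1$-tail $Y$ attached to $C$ at a point $y'$ and carrying $p_3$ and $p_4$. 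First I would identify the limiting twisted $k$-differential: on the three-pointed tail one is forced to have $(\omega_Y)=kp_3+kp_4-4ky$, while the node condition forces $\ord_{y'}(\omega_C)=2k$, hence $(\omega_C)=d_2p_1+d_3p_2+2ky'+kp_5+d_1p_6+p_7-p_8$, so that $C\succ Y$ and the level graph is uniquely determined.

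The key point is then the same as in Examples~\ref{Ex1} and~\ref{Ex2}: the top-level component $C$ carries the marked pole $p_8$ (of order $-1$), so the global $k$-residue condition of~\cite{BCGGM2} imposes nothing, and the (unique up to scale) $k$-differential on the three-pointed $\PP^1$-tail always exists. Consequently a point of $B\cap\delta_{0:\{3,4\}}$ is exactly the datum of a triple $(p_1,p_2,y')\in C^3$ with
\begin{equation*}
d_2p_1+d_3p_2+2ky'\sim kK_C-kp_5-d_1p_6-p_7+p_8 ,
\end{equation*}
the pointed $\PP^1$-tail being rigid. The right-hand side is a fixed class in $\Pic^{3k-d_1}(C)$, general because $[C,p_5,\dots,p_8]$ is general, so by the Picard variety method~[\cite{Mullane1},\S2.5] the number of such triples is the degree of the finite map $C^3\to\Pic^{3k-d_1}(C)$, $(p_1,p_2,y')\mapsto\OO_C(d_2p_1+d_3p_2+2ky')$, namely $d_2^2d_3^2(2k)^2\cdot 3!=24k^2d_2^2d_3^2$ (for $d_2,d_3\ne 0$; when $d_2d_3=0$ both sides vanish since then $B$ does not meet $\delta_{0:\{3,4\}}$ for general data). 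For general fixed data this fibre is reduced and disjoint from all diagonals and from the loci where one of $p_1,p_2,y'$ equals one of $p_5,\dots,p_8$, so $B$ crosses $\delta_{0:\{3,4\}}$ transversally at each of these points and $B^{4,k}_{\mu,1,-1}\cdot\delta_{0:\{3,4\}}=24k^2d_2^2d_3^2$.

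The only delicate step is the passage through the compactified stratum: one must be sure the global $k$-residue condition does not discard any of the Picard-map solutions, and this is precisely what the marked pole $p_8$ on the main component $C$ guarantees, exactly as in Lemma~\ref{intg2}. The remaining ingredients — finiteness and degree of the Picard map, and reducedness and transversality of the intersection for a general choice of the fixed pointed curve — are routine general-position arguments.
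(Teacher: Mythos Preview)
Your proposal is correct and takes essentially the same approach as the paper: identify the limiting twisted $k$-differential at the collision $p_3=p_4$, observe that the marked pole $p_8$ on $C$ makes the global $k$-residue condition vacuous, and then count solutions as the degree of the map $C^3\to\Pic^{3k-d_1}(C)$, $(p_1,p_2,y')\mapsto\OO_C(d_2p_1+d_3p_2+2ky')$. The only cosmetic difference is that the paper spells out this degree via the theta divisor on $J(C)$ (giving $d_2^2\Theta\cdot d_3^2\Theta\cdot(2k)^2\Theta=24k^2d_2^2d_3^2$), whereas you quote the Picard variety formula $\prod d_i^2\cdot g!$ directly; these are the same computation.
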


\begin{proof}
We need to ennumerate the points $p_1,p_2,p_3$ such that for fixed general $q_i$,
\begin{equation*}
d_2p_1+d_3p_2+2kp_3+kq_1+d_1q_2+q_3-q_4\sim kK_C
\end{equation*}
with $p_i\ne p_j$ for $i\ne j$ and $p_i\ne q_j$ and any limits that may occur with these points colliding that will satisfy the global $k$-residue condition.

Consider the map
\begin{eqnarray*}
\begin{array}{cccccc}
f:C^{3}&\longrightarrow &\Pic^{3k-d_1}(C)\\
(p_1,p_2,p_3)&\longmapsto&O_C(d_2p_1+d_3p_2+2kp_3).
\end{array}
\end{eqnarray*}
The fibre of this map above $kK_C(-kq_1-d_1q_2-q_3+q_4)\in\Pic^{3k-d_1}(C)$ will give us the solutions of interest. Take a general point $e\in C$ and consider the isomorphism
\begin{eqnarray*}
\begin{array}{cccccc}
h:\Pic^{3k-d_1}(C)&\longrightarrow &J(C)\\
L&\longmapsto&L\otimes\OO_C(-de).
\end{array}
\end{eqnarray*}
Now let $F=h\circ f$, then $\deg F=\deg f$. Observe
\begin{equation*}
F(p_1,p_2,p_3)=\OO_C(d_2(p_1-e)+d_3(p_2-e)+2k(p_3-e)).
\end{equation*}
Let $\Theta$ be the fundamental class of the theta divisor in $J(C)$. By \cite{ACGH} \S1.5 we have
\begin{equation*}
\deg \Theta^g=g!=6
\end{equation*}
and the dual of the locus of $\OO_C(m(x-e))$ for varying $x\in C$ has class $m^2\Theta$ in $J(C)$. Hence
\begin{eqnarray*}
\deg F&=&\deg F_*F^*([\OO_C])\\
&=&\deg \left(d_2^2\Theta \cdot d_3^2\Theta\cdot (2k)^2\Theta   \right)\\
&=&24k^2d_2^2d_3^2.
\end{eqnarray*}
As we have chosen the $q_i$ general, the general fibre will contain no points where the $p_i$ coincide with each other or with the $q_i$. Hence we have found all solutions.
\end{proof}
%g=3. ~1 page.
With the above lemmas we proceed to prove the remaining cases of Theorem~\ref{fullextremal}.
\begin{prop}\label{mero2}
For $g=3$ and $j=0,1,2$ if $\PPP^k(d_1,d_2,d_3,k^{3})$ is irreducible, the cycle $[\varphi_j{}_*\overline{\PPP}^k(d_1,d_2,d_3,k^{3})]$ is rigid and extremal in $\mbox{Eff}^{\hspace{0.1cm}3-j}(\overline{\mathcal{M}}_{g,6-j})$, where $\varphi_j:\overline{\mathcal{M}}_{3,6}\longrightarrow\overline{\mathcal{M}}_{3,6-j}$ forgets the last $j$ points with $d_1+d_2+d_3=k$ and $d_i\ne 0$.
\end{prop}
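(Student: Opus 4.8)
The plan is to run the downward induction on $j$ of Proposition~\ref{mero1}, whose base case $j=g-1=2$ is the divisorial statement of Theorem~\ref{thm:extremal}, and to supply a substitute for the two steps in that proof that used the hypothesis that some $d_i=k$. Write $\overline{\PPP}^k$ for $\overline{\PPP}^k(d_1,d_2,d_3,k^3)$ and suppose, inductively, that $[(\varphi_{j+1})_*\overline{\PPP}^k]$ is rigid and extremal.

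Given a hypothetical effective decomposition $[(\varphi_j)_*\overline{\PPP}^k]=\sum c_i[V_i]$ with $c_i>0$ and the $V_i$ irreducible of codimension $g-j$ and not proportional to $[(\varphi_j)_*\overline{\PPP}^k]$, I would first repeat the inductive step of Proposition~\ref{mero1} verbatim: pushing forward under the forgetful morphisms $\pi_m\colon\overline{\mathcal{M}}_{3,6-j}\to\overline{\mathcal{M}}_{3,5-j}$ of the weight-$k$ markings $m=4,\dots,6-j$, for which $(\pi_m)_*[(\varphi_j)_*\overline{\PPP}^k]=[(\varphi_{j+1})_*\overline{\PPP}^k]$, extremality of the target forces some $V_i$ to be supported on $W:=\bigcap_{m=4}^{6-j}\pi_m^{-1}\bigl((\varphi_{j+1})_*\overline{\PPP}^k\bigr)$ and to dominate $(\varphi_{j+1})_*\overline{\PPP}^k$ under each such $\pi_m$. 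When some $d_i=k$ the extra index $m=3$ is available, enough weight-$k$ markings survive, and exactly as in Proposition~\ref{mero1} the generic point of $V_i$ is forced to have all its weight-$k$ marked points distinct and its auxiliary points coinciding with marked points, so $V_i$ is proportional to $(\varphi_j)_*\overline{\PPP}^k$ — contradicting the choice of decomposition. Without that index one has one forgetful direction fewer than in Proposition~\ref{mero1}, which is not enough to conclude there, and $W$ may have irreducible components besides $(\varphi_j)_*\overline{\PPP}^k$.

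The substantive step is then to show no such extra component can occur as a $V_i$. Set-theoretically these components are of two kinds: torsion-type loci, on which the weight-$k$ markings lie on distinct nonzero $k$-torsion translates of one effective divisor rather than on a common member of its linear series, and boundary-type loci, on which two weight-$k$ markings collide on a rational tail; the fact that on a general genus-$3$ curve the relevant maps $\Sym^d C\to\Pic$ are the finite maps analysed in \cite{Mullane1} and \cite{Mullane4} reduces the possibilities to a short explicit list. Each is excluded by a case analysis combining forgetful pushforwards with the moving-curve argument of Proposition~\ref{moving}. To illustrate: if a candidate $W'$ whose image under the forgetful morphism to $\overline{\mathcal{M}}_{3,4}$ carries $\delta_{0:\{3,4\}}$ as a component occurred as some $V_{i_0}$ with $c_{i_0}>0$, then pushing the relation $[(\varphi_j)_*\overline{\PPP}^k]=\sum c_i[V_i]$ forward to a divisor on $\overline{\mathcal{M}}_{3,4}$ and pairing with the moving curve $B^{4,k}_{\mu,1,-1}$ of Proposition~\ref{moving}, $\mu=(d_2,d_3,k^3,d_1)$, would give $0=B^{4,k}_{\mu,1,-1}\cdot(\varphi_2)_*\overline{\PPP}^k\ge c_{i_0}B^{4,k}_{\mu,1,-1}\cdot\delta_{0:\{3,4\}}=24k^2d_2^2d_3^2c_{i_0}>0$ by Theorem~\ref{thm:boundary} and Lemma~\ref{intg3}, a contradiction; the torsion-type candidates are handled similarly after first pushing forward along a morphism dropping one of the $d_i$-weighted markings, which contracts them. (The companion genus-$2$ argument uses Lemma~\ref{intg2} in the same role.) Hence every $V_i$ is proportional to $(\varphi_j)_*\overline{\PPP}^k$, the assumed decomposition is trivial, and the induction closes.

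I expect the main obstacle to be this middle step: correctly enumerating the extra components of $W$ and controlling their images under all the forgetful morphisms, since membership of the associated twisted $k$-differentials in a main component is governed by the global $k$-residue condition and this interacts delicately with the $k$-torsion bookkeeping on a general genus-$3$ curve; the intersection numbers of Lemmas~\ref{intg2}–\ref{intg3} are precisely the input that makes the elimination of the boundary-type candidates quantitative.
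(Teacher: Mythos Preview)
Your approach is essentially the paper's: downward induction from the divisorial base case, the mero1-style pushforward argument under the weight-$k$ forgetful maps, and elimination of the surviving extra candidate via the moving curve $B^{4,k}_{\mu,1,-1}$ together with Lemma~\ref{intg3}. Your illustration of the boundary-type elimination (push forward by $\pi_1$, obtain $D^{4,k}_{d_2,d_3,k^3,d_1}-c_ie\,\delta_{0:\{3,4\}}$, intersect with $B^{4,k}_{\mu,1,-1}$) reproduces the paper's argument verbatim.

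Two points where the paper is sharper than your outline. First, the paper observes that for $j=0$ the argument of Proposition~\ref{mero1} already suffices: with $m=4,5,6$ there are three weight-$k$ forgetful directions, exactly the number used in mero1 for $g=3$ (with the index shift absorbing the missing $m=3$), so the intersection $\bigcap_m \pi_m^{-1}(\cdot)$ forces $V_i$ onto $\overline{\PPP}^k$ without further work. Only the step $j=1$ requires the supplementary elimination. Second, for that step the paper asserts that the intersection $\pi_4^{-1}\cap\pi_5^{-1}$ has exactly \emph{two} codimension-two candidates, namely $(\varphi_1)_*\overline{\PPP}^k$ and the single boundary component $X=\alpha_*D^{4,k}_{d_1,d_2,d_3,k^3}$; your ``torsion-type'' loci do not appear in the paper's analysis at all. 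So the case analysis you flag as the main obstacle collapses, in the paper's treatment, to a single extra component.

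One caution: your proposed mechanism for disposing of torsion-type loci---that forgetting a $d_i$-weighted marking ``contracts them''---is not obviously true and is not argued. A torsion-type locus (if present) has the same dimension as $(\varphi_1)_*\overline{\PPP}^k$, and there is no evident reason $\pi_1$ should have positive-dimensional fibres on it. Since the paper does not invoke such loci, you would either need to justify their absence (as the paper tacitly does) or give a genuine argument for their elimination; the contraction claim as stated is a gap.
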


\begin{proof} The case where some $d_i=k$ is covered by Proposition~\ref{mero1}. Assume $d_i\ne k$, we proceed by induction. 
The argument used in the proof of Proposition~\ref{mero1} shows that if $[(\varphi_{1})_*\overline{\PPP}^k(d_1,d_2,d_3,k^{3})]$ is rigid and extremal then $[\overline{\PPP}^k(d_1,d_2,d_3,k^{3})]$ is rigid and extremal.

In the remaining case $j=1$, $V_i$ is supported in the intersection of $(\pi_m)^{-1}(\varphi_{2}{}_*\overline{\PPP}(d_1,d_2,d_3,1^{3}) )$ for $m=4,5$. In this case there are two possible candidates for where the irreducible cycle $V_i$ is supported. By assumption the cycle $V_i$ is not supported on either $\varphi_1{}_*\overline{\PPP}^k(d_1,d_2,d_3,k^{3})$ so it must be supported on $X=\alpha_*D^{4,k}_{d_1,d_2,d_3,k^3}$ where
\begin{eqnarray*}
\begin{array}{cccc}
\alpha:&\overline{\mathcal{M}}_{3,4}&\rightarrow& \overline{\mathcal{M}}_{3,5}\\
&[C,p_1,p_2,p_3,y]&\mapsto&[C\bigcup_{y=y'}Y,p_1,\dots,p_5].
\end{array}
\end{eqnarray*} 
where $[Y,y',p_4,p_5]$ is a rational curve marked at three distinct points. Then $X$ is irreducible if $\overline{\PPP}^k(d_1,d_2,d_3,k^{3})$ is irreducible. Hence $[V_i]$ is proportional to $[X]$ and
\begin{equation*}
\pi_1{}_*[V_i]=  e\delta_{0:\{3,4\}}
\end{equation*}
for some $e>0$.  As the cycle $[(\varphi_1)_*\overline{\PPP}^k(d_1,d_2,d_3,k^{3})]-c_i[V_i]$ is an effective codimension two cycle in $\overline{\mathcal{M}}_{3,5}$, by pushing down under the morphism that forgets the first marked point we obtain the effective divisor class 
\begin{equation*}
\pi_1{}_*\left([(\varphi_j)_*\overline{\PPP}^k(d_1,d_2,d_3,k^{3})]-c_i[V_i]\right)=D^{4,k}_{d_2,d_3,k^3,d_1}-c_i e\delta_{0:\{3,4\}}.
\end{equation*}
However, by Theorem~\ref{thm:boundary} and Lemma~\ref{intg3} we observe
\begin{equation*}
B^{4,k}_{\mu,1,-1}\cdot (D^{4,k}_{\mu}-c_i e\delta_{0:\{3,4\}})=0-24k^2d_2^2d_3^2c_ie<0,
\end{equation*}
for $\mu=(d_2,d_3,k^3,d_1)$, which contradicts the assertion of Proposition~\ref{moving} that $B^{4,k}_{\mu,1,-1}$ is a moving curve. Hence $V_i$ is not supported on $X$ and must be supported on $\varphi_{1}{}_*\overline{\PPP}^k(d_1,d_2,d_3,k^{3})$ contradicting our assumption of an effective decomposition. Hence $[\varphi_{1}{}_*\overline{\PPP}^k(d_1,d_2,d_3,k^{3})]$ is rigid and extremal if $[\varphi_{2}{}_*\overline{\PPP}^k(d_1,d_2,d_3,k^{3})]$ is rigid and extremal. The base case for the inductive argument is the divisorial case $j=2$ presented in Theorem~\ref{thm:extremal}.
\end{proof}

%g=2~3/4 page
\begin{prop}\label{mero3}
For $g= 2$ and $\gcd(h,k)=1$ if $\PPP^k(h,-h,k,k)$ is irreducible, the cycle $[\overline{\PPP}^k(h,-h,k,k)]$ is rigid and extremal in $\mbox{Eff}^{\hspace{0.1cm}2}(\overline{\mathcal{M}}_{2,4})$.
\end{prop}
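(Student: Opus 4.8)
The statement is the remaining case $g=2$, $j=0$ of Theorem~\ref{fullextremal}; there the hypothesis that some $d_i=k$ forces the signature $(d_1,d_2,d_3,k^{2g-3})=(d_1,d_2,d_3,k)$ to be $(h,-h,k,k)$ with $k\nmid h$. The plan is to run the induction of Propositions~\ref{mero1} and~\ref{mero2} through the single remaining step $j=1\Rightarrow j=0$. The base case $j=1$ is the divisorial statement of Theorem~\ref{thm:extremal}: forgetting a mark of weight $k$ sends $\overline{\PPP}^k(h,-h,k,k)$ to a positive multiple of $D^{3,k}_{h,-h,k,k}$, which is irreducible because $\PPP^k(h,-h,k,k)$ is, and hence rigid and extremal.

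So suppose for contradiction that $[\overline{\PPP}^k(h,-h,k,k)]=\sum c_i[V_i]$ is a nontrivial effective decomposition in $\mbox{Eff}^2(\overline{\mathcal{M}}_{2,4})$, the $V_i$ irreducible and none proportional to $[\overline{\PPP}^k(h,-h,k,k)]$. First I would push this relation forward under the two morphisms $\pi_3,\pi_4\colon\overline{\mathcal{M}}_{2,4}\to\overline{\mathcal{M}}_{2,3}$ forgetting the two marks of weight $k$. As in the proof of Proposition~\ref{mero1}, extremality of $D^{3,k}_{h,-h,k,k}$ forces some $V_i$ to be supported on $\pi_3^{-1}(D^{3,k}_{h,-h,k,k})\cap\pi_4^{-1}(D^{3,k}_{h,-h,k,k})$ with both $\pi_3|_{V_i}$ and $\pi_4|_{V_i}$ dominant and generically finite onto $D^{3,k}_{h,-h,k,k}$. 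The next and decisive step is to enumerate the irreducible codimension-two subvarieties of $\overline{\mathcal{M}}_{2,4}$ with this property: running the global $k$-residue condition through the degenerations admitted inside these two preimages, in the spirit of Examples~\ref{Ex1} and~\ref{Ex2}, I expect that besides $\overline{\PPP}^k(h,-h,k,k)$ itself the only such subvariety is the boundary cycle $X=\alpha_*D^{3,k}_{h,-h,k,k}$, where $\alpha\colon\overline{\mathcal{M}}_{2,3}\to\overline{\mathcal{M}}_{2,4}$ attaches a three-pointed rational tail $[Y,y',p_3,p_4]$ carrying the two weight-$k$ marks at the mark $y$; this $X$ is irreducible since $D^{3,k}_{h,-h,k,k}$ is.

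If $[V_i]$ is a positive multiple of $[\overline{\PPP}^k(h,-h,k,k)]$ we contradict nontriviality, so we may assume $[V_i]=c[X]$ with $c>0$, and I would eliminate this exactly as in Proposition~\ref{mero2}. The class $[\overline{\PPP}^k(h,-h,k,k)]-c_i[V_i]$ is effective, hence so is its pushforward under $\pi_1\colon\overline{\mathcal{M}}_{2,4}\to\overline{\mathcal{M}}_{2,3}$ forgetting the weight-$h$ mark $p_1$. Since $\pi_1{}_*\overline{\PPP}^k(h,-h,k,k)$ is a positive multiple of $D^{3,k}_{-h,k^2,h}$ and $\pi_1$ maps $X$ dominantly and generically finitely onto $\Delta_{0:\{2,3\}}$ (the rational tail persists, still carrying $p_3,p_4$), this pushforward equals, up to a positive constant, $D^{3,k}_{-h,k^2,h}-c_ie\,\delta_{0:\{2,3\}}$ for some $e>0$. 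By Theorem~\ref{thm:boundary} we have $B^{3,k}_{-h,k^2,h,1,-1}\cdot D^{3,k}_{-h,k^2,h}=0$, while Lemma~\ref{intg2} gives $B^{3,k}_{-h,k^2,h,1,-1}\cdot\delta_{0:\{2,3\}}=8k^2h^2$, so
\begin{equation*}
B^{3,k}_{-h,k^2,h,1,-1}\cdot\bigl(D^{3,k}_{-h,k^2,h}-c_ie\,\delta_{0:\{2,3\}}\bigr)=-8k^2h^2c_ie<0.
\end{equation*}
This contradicts Proposition~\ref{moving}, which asserts that $B^{3,k}_{-h,k^2,h,1,-1}$ is a moving curve (its signature has $6\ge 3+2$ entries). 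Hence no such decomposition exists and $[\overline{\PPP}^k(h,-h,k,k)]$ is rigid and extremal.

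The step I expect to be the main obstacle is exactly the enumeration in the second paragraph: showing that the only irreducible codimension-two subvarieties of $\overline{\mathcal{M}}_{2,4}$ dominated generically finitely by both $\pi_3$ and $\pi_4$ onto $D^{3,k}_{h,-h,k,k}$ are $\overline{\PPP}^k(h,-h,k,k)$ and $\alpha_*D^{3,k}_{h,-h,k,k}$. Here one must rule out the a priori possible interior loci on which $p_3$ and $p_4$ are ``mismatched'' zeros of related twisted $k$-canonical divisors, i.e.\ verify that these do not yield a further irreducible component not handled by the moving-curve argument above; once the candidate list is pinned down, the elimination is routine.
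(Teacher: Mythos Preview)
Your approach is essentially identical to the paper's own proof: start from the divisorial base case (Theorem~\ref{thm:extremal}), push an assumed effective decomposition forward under $\pi_3$ and $\pi_4$, deduce that some $V_i$ lies in $\pi_3^{-1}(\pi_3{}_*\overline{\PPP}^k(h,-h,k,k))\cap\pi_4^{-1}(\pi_4{}_*\overline{\PPP}^k(h,-h,k,k))$, identify the two candidates $\overline{\PPP}^k(h,-h,k,k)$ and $X=\alpha_*D^{3,k}_{h,-h,k^2}$, and eliminate $X$ by pushing forward under $\pi_1$ and intersecting with the moving curve $B^{3,k}_{-h,k^2,h,1,-1}$ using Theorem~\ref{thm:boundary} and Lemma~\ref{intg2}. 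The numerics and the contradiction are exactly as in the paper.

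Regarding the step you flag as the main obstacle: the paper handles this by bare assertion, writing that the intersection ``has two irreducible components'' and invoking ``the same argument in the proof of Proposition~\ref{mero2}'', without spelling out why no further interior component with mismatched $p_3,p_4$ can occur. So you are not missing an argument that the paper supplies; your caution is well placed, but it does not indicate a divergence from the paper's line of proof.
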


\begin{proof}
$[\pi_m{}_*\overline{\PPP}(h,-h,k,k)]$ is rigid and extremal for $m=3,4$ by Theorem~\ref{thm:extremal}. 
If $[\overline{\PPP}^k(h,-h,k,k)]$ is not extremal then it can be expressed as
\begin{equation*}
[\overline{\PPP}(h,-h,1,1)]=\sum c_i [V_i]
\end{equation*}
for $c_i>0$, $V_i$ irreducible with class not proportional to $[\overline{\PPP}^k(h,-h,k,k)]$. Hence by the same argument in the proof of Proposition~\ref{mero2} we obtain that  some $V_i$ must be supported on the intersection 
\begin{equation*}
\pi_3^{-1}(\pi_3{}_*\overline{\PPP}^k(h,-h,k,k))\cap \pi_4^{-1}(\pi_4{}_*\overline{\PPP}^k(h,-h,k,k))
\end{equation*}
which has two irreducible components. However, as $V_i$ is by assumption, not supported on $\overline{\PPP}^k(h,-h,k,k)$ it must be supported on $X=\alpha_*D^{3,k}_{h,-h,k^2}$ where
\begin{eqnarray*}
\begin{array}{cccc}
\alpha:&\overline{\mathcal{M}}_{2,3}&\rightarrow& \overline{\mathcal{M}}_{2,4}\\
&[C,p_1,p_2,y]&\mapsto&[C\bigcup_{y=y'}Y,p_1,\dots,p_4].
\end{array}
\end{eqnarray*} 
where $[Y,y',p_3,p_4]$ is a rational curve marked at three distinct points and 
\begin{equation*}
D^{3,k}_{h,-h,k^2}=\varphi_1{}_*\overline{\PPP}^k(h,-h,k,k)=\pi_4{}_*\overline{\PPP}^k(h,-h,k,k).
\end{equation*}
The irreducibility of $X$ follows from the assumed irreducibility of $\overline{\PPP}^k(h,-h,k,k)$. Hence if $V_i$ is supported on $X$ then $[V_i]$ is proportional to $[X]$ and  
\begin{equation*}
\pi_1{}_*[V_i]=  e\delta_{0:\{2,3\}}
\end{equation*}
for some $e>0$.  

As the cycle $[\overline{\PPP}^k(h,-h,k,k)]-c_i[V_i]$ is effective, by pushing down under the morphism that forgets the first marked point we obtain the effective class 
\begin{equation*}
\pi_1{}_*\left([\overline{\PPP}^k(h,-h,k,k))]-c_i[V_i]\right)=D^{3,k}_{-h,k,k,h}-c_i e\delta_{0:\{2,3\}}.
\end{equation*}
However, by Proposition~\ref{thm:boundary} and Lemma~\ref{intg2} 
\begin{equation*}
B^{3,k}_{\mu,1,-1}\cdot (D^{3,k}_{-h,k,k,h}-c_i e\delta_{0:\{2,3\}})=0-8k^2h^2ec_i<0,
\end{equation*}
for $\mu=(-h,k,k,h)$ which contradicts the assertion of Proposition~\ref{moving} that $B^{3,k}_{\mu,1,-1}$ is a moving curve. Hence $V_i$ is not supported on $X$ and must be supported on $\overline{\PPP}^k(h,-h,k,k)$ providing a contradiction with the given effective decomposition. Hence $[\overline{\PPP}^k(h,-h,k,k)]$ is rigid and extremal.
\end{proof}

%%%%%%%%%%%%%%%%%%%%%%%%%%%%%%
%%%%%%%%%%%%%%%%%%%%%%%%%%%%%%

\bibliographystyle{plain}
\bibliography{base}
\end{document}